\documentclass[12pt]{amsart}
\usepackage[english]{babel}
\usepackage{amsmath,mathrsfs,amssymb,extarrows}
\usepackage{amsthm}
\usepackage{hyperref}
\usepackage{url}
\usepackage{tikz,enumerate}
\usepackage{diagbox}
\usepackage{appendix}
\usepackage{epic}
\usepackage{tabularx}
\usepackage{lscape}
\usepackage{longtable}
\usepackage{amsfonts,amsthm,cite,color}
\usepackage{epsfig}

\numberwithin{equation}{section}

\usepackage{amsfonts,amsthm,cite,color}

\newtheorem{theorem}{Theorem}
\newtheorem{corollary}[theorem]{Corollary}

\newtheorem{lemma}[theorem]{Lemma}

\numberwithin{equation}{section}
\numberwithin{theorem}{section}
\numberwithin{defn}{section}
\allowdisplaybreaks

\begin{document}

\begin{center}
{\large {\bf Asymmetric bilateral Bailey pairs and Rogers--Ramanujan type identities}}

\vskip 6mm

{\small Xiangxin Liu$^{a}$ and Lisa Hui Sun$^{b*}$}\\[2mm]

{\small Center for Combinatorics, LPMC, Nankai University, Tianjin 300071, P.R. China} \\[2mm]

{$^{a}$liuxx@mail.nankai.edu.cn, $^{b}$sunhui@nankai.edu.cn}

\end{center}

{\noindent \bf Abstract.} 
The theory of Bailey's transform provides a systematic method for deriving $q$-identities, the key factor of which is the Bailey pair. The concept of Bailey pair was first extended to bilateral version by Paule. In this paper, following Rogers' work on Fourier series, we derive two asymmetric bilateral Bailey pairs. By inserting them into the bilateral Bailey chains, we obtain several identities of Rogers--Ramanujan type, Andrews--Gordon type and also identities on false theta functions. Furthermore, based on the Bailey lattice due to Dousse, Jouhet and Konan, we get an asymmetric bilateral Bailey lemma which leads to identities on Appell--Lerch series. Moreover, by using the asymmetric bilateral Bailey lemmas due to  Andrews and Warnaar, we get some identities on false theta functions and the generalized Hecke--type series.

{\noindent \bf Keywords:}
bilateral Bailey pair; Rogers--Ramanujan identities; Andrews--Gordon identities; false theta functions; generalized Hecke--type series

{\noindent \bf Mathematics Subject Classification 2020:}
33D15; 11P84.

\allowdisplaybreaks

\section{Introduction}

The well-known Rogers--Ramanujan identities can be stated as follows
\begin{align*}
\sum_{n=0}^{\infty}\frac{q^{n^2+(1-i)n}}{(q; q)_n}=\frac{1}{(q^{2-i}, q^{3+i}; q^5)_\infty},
\end{align*}
where $i=0,1$, which were first discovered and proved in 1894 by Rogers \cite{Rog94}, and later independently rediscovered by Ramanujan \cite{RR19}.
Since then many series--product identities similar to the form of the  Rogers--Ramanujan identities have been continuously studied and thereby are called Rogers--Ramanujan type identities.

Inspired by Rogers' proof of the Rogers--Ramanujan identities \cite{Rog17}, Bailey derived the mechanism which is named Bailey's transform in \cite{Bai49}. The key of Bailey's transform is the notion of Bailey pair, which is given as a pair of sequence of rational functions $(\alpha_n(a,q), \beta_n(a,q))_{n\geq 0}$  with respect to $a$ such that
\begin{align}\label{Baileyp}
\beta_n(a,q)=\sum_{j=0}^n \frac{\alpha_j(a,q)}{(q;q)_{n-j}(aq;q)_{n+j}}.
\end{align}
For a Bailey pair in the form of  \eqref{Baileyp}, Bailey \cite{Bai49} provided  the following fundamental result, which is a consequence of Bailey's transform.

\begin{lemma}[Bailey's lemma] \label{baileylem} If $(\alpha_n(a,q),\ \beta_n(a,q))$ forms a Bailey pair, then
\begin{align*}
&\frac{1}{(aq/\rho_1,aq/\rho_2;q)_n}\sum_{j=0}^n \frac{(\rho_1,\rho_2;q)_j(aq/\rho_1\rho_2;q)_{n-j}}{(q;q)_{n-j}} \bigg(\frac{aq}{\rho_1\rho_2}\bigg)^j \beta_j(a,q)\\
&\qquad =\sum_{j=0}^n \frac{(\rho_1,\rho_2;q)_j}{(q;q)_{n-j}(aq;q)_{n+j}(aq/\rho_1,aq/\rho_2;q)_j} \bigg(\frac{aq}{\rho_1\rho_2}\bigg)^j \alpha_j(a,q).
\end{align*}
\end{lemma}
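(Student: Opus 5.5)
Substitute the defining relation \eqref{Baileyp} for $\beta_j(a,q)$ into the left-hand side, interchange the order of summation, and evaluate the resulting inner sum by the $q$-Pfaff--Saalsch\"utz summation. Doing so gives
\begin{align*}
\text{LHS}=\frac{1}{(aq/\rho_1,aq/\rho_2;q)_n}\sum_{k=0}^n \alpha_k(a,q)\sum_{j=k}^n \frac{(\rho_1,\rho_2;q)_j(aq/\rho_1\rho_2;q)_{n-j}}{(q;q)_{n-j}(q;q)_{j-k}(aq;q)_{j+k}}\Big(\frac{aq}{\rho_1\rho_2}\Big)^j .
\end{align*}
It therefore suffices to show, for each fixed $0\le k\le n$, that the inner sum equals
\begin{align*}
\frac{(\rho_1,\rho_2;q)_k\,(aq/\rho_1,aq/\rho_2;q)_n}{(q;q)_{n-k}(aq;q)_{n+k}(aq/\rho_1,aq/\rho_2;q)_k}\Big(\frac{aq}{\rho_1\rho_2}\Big)^k .
\end{align*}

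To evaluate the inner sum, shift the index by $j=k+m$ with $0\le m\le n-k$ and set $N=n-k$. Split the shifted factors using
\[
(\rho_i;q)_{k+m}=(\rho_i;q)_k(\rho_iq^k;q)_m,\qquad (aq;q)_{2k+m}=(aq;q)_{2k}(aq^{2k+1};q)_m .
\]
For the remaining factors, use the reversal identities
\[
\frac{(x;q)_{N-m}}{(q;q)_{N-m}}=\frac{(x;q)_N}{(q;q)_N}\,\frac{(q^{-N};q)_m}{(q^{1-N}/x;q)_m}\Big(\frac{q}{x}\Big)^m
\]
with $x=aq/\rho_1\rho_2$. After these substitutions the sum over $m$ becomes a terminating balanced ${}_3\phi_2$:
\[
{}_3\phi_2\!\left(\begin{matrix}q^{-N},\ \rho_1q^k,\ \rho_2q^k\\ aq^{2k+1},\ \rho_1\rho_2q^{-N-k}/a\end{matrix};q,q\right).
\]
The powers of $q$ should combine to give argument $q$: the factor $(aq/\rho_1\rho_2)^m$ cancels against $(q/x)^m$, leaving $q^m$. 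Balance also holds, since $q\cdot q^{-N}\rho_1\rho_2q^{2k}=aq^{2k+1}\cdot\rho_1\rho_2q^{-N-k}/a$.

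The $q$-Pfaff--Saalsch\"utz formula then gives
\[
\frac{(aq^{k+1}/\rho_1,\ aq^{k+1}/\rho_2;q)_N}{(aq^{2k+1},\ aq/\rho_1\rho_2;q)_N}.
\]
Multiplying by the prefactor $\frac{(\rho_1,\rho_2;q)_k(aq/\rho_1\rho_2;q)_N}{(q;q)_N(aq;q)_{2k}}(aq/\rho_1\rho_2)^k$, the factor $(aq/\rho_1\rho_2;q)_N$ cancels. The remaining factors combine via $(aq;q)_{2k}(aq^{2k+1};q)_N=(aq;q)_{n+k}$ and $(aq/\rho_i;q)_k(aq^{k+1}/\rho_i;q)_N=(aq/\rho_i;q)_n$. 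Dividing by $(aq/\rho_1,aq/\rho_2;q)_n$ then yields exactly the $k$-th summand on the right-hand side, with $j$ renamed $k$.

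The main obstacle is the bookkeeping in this reduction to the Saalsch\"utz form: correctly matching the denominator parameter $\rho_1\rho_2q^{-N-k}/a$ and confirming both that the power of $q$ is exactly $q^m$ and that the series is balanced. Once those checks are in place, the rest is routine cancellation of $q$-Pochhammer symbols.
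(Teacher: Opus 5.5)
Your argument is correct. The paper gives no proof of this lemma; it cites Bailey and calls it a consequence of Bailey's transform. Your route (substitute the defining relation, interchange the finite double sum, which is exactly Bailey's transform, then evaluate the inner sum by $q$-Pfaff--Saalsch\"utz) is the standard proof of this finite form.

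There is one bookkeeping slip, in the step you flagged as the main obstacle.

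\begin{itemize}
\item The reversal identity with $x=aq/\rho_1\rho_2$ produces the lower parameter $q^{1-N}/x=\rho_1\rho_2q^{-N}/a$, not $\rho_1\rho_2q^{-N-k}/a$.
\item Your balance check as written asserts $q^{2k+1-N}=q^{k+1-N}$, which is false for $k\geq 1$.
\item With the correct parameter $\rho_1\rho_2q^{-N}/a$ the series is genuinely balanced, and it is exactly the series the reversal identity produces.
\item The Saalsch\"utz value you quote, $(aq^{k+1}/\rho_1,aq^{k+1}/\rho_2;q)_N/(aq^{2k+1},aq/\rho_1\rho_2;q)_N$, is precisely the evaluation of that correct series.
\end{itemize}

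So once you fix the parameter and the balance check, the rest of your cancellation goes through unchanged.
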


Rogers--Ramanujan type identities are widely studied in mathematics and physics, for which the close connections with Lie algebras and statistical mechanics have improved and extended the study of Bailey's lemma, see for examples \cite{ABF84,BMS96,ASW99}. 
MacMahon \cite{MacMahon} and Schur \cite{Schur17} also gave combinatorial interpretations for the Rogers--Ramanujan  identities.  Then Gordon \cite{Gordon61} further generalized the combinatorial results for Rogers--Ramanujan  identities. A decade later, Andrews \cite{A74} presented the following $q$-formulation for Gordon's results
\begin{align}\label{eq:AGri}
\sum_{n_1\geq\cdots\geq n_{k-1}\geq0} \frac{q^{n_1^2 +\cdots +n_{k-1}^2 +n_{i}+n_{i+1}+ \cdots+n_{k-1}}}{(q)_{n_1-n_2} \cdots(q)_{n_{k-2}-n_{k-1}} (q)_{n_{k-1}}}
=\frac{(q^{i}, q^{2k-i+1}, q^{2k+1}; q^{2k+1})_\infty}{(q)_\infty},
\end{align}
with $k \geq 2$ and $1 \leq i \leq k$ being two integers, which is  the well--known Andrews--Gordon identity.
In 2010,   Andrews \cite[(1.9)]{And10} obtained that for $k\geq i\geq 2$, $k$ odd and $i$ even, then
\begin{align}\label{eq:AGev}
&\sum_{n_1 \geq \cdots \geq n_{k-1} \geq 0} \frac{q^{n_1^2+ \cdots + n_{k-1}^2 +n_1 -n_2 +n_3-n_4+\cdots+ n_{i-3}-n_{i-2} +n_{i-1} +n_i + \cdots + n_{k-1}}}{(q^2;q^2)_{n_1-n_2} \cdots (q^2;q^2)_{n_{k-2} -n_{k-1}} (q^2;q^2)_{n_{k-1}}}\nonumber\\
&\qquad\qquad\qquad\qquad
=\frac{(-q^2;q^2)_\infty (q^{i}, q^{2k+2-i}, q^{2k+2}; q^{2k+2})_\infty} {(q^2;q^2)_\infty}.
\end{align}
Recently, Andrews \cite{And12,And19} applied  the classical orthogonal polynomials and the $q$-orthogonal polynomials to study Rogers--Ramanujan type identities. In \cite{And19}, Andrews  derived the following identities
\begin{align}
&\prod_{j=1}^n (1+2xq^j+q^{2j})=\sum_{j=0}^n {2n+1\brack n-j}q^{j+1\choose 2} V_j(x) ,\label{Andrewidentity1} \\
&\prod_{j=0}^{n-1} (1+2xq^j+q^{2j})=\sum_{j=0}^n q^{j\choose 2} (1-q^{2j+1})\frac{(q)_{2n}}{(q)_{n-j}(q)_{n+j+1}} W_j(x),\label{Andrewidentity2}
\end{align}
where $x=\cos\theta$,   $V_n(x)$ and $W_n(x)$ are  Chebyshev polynomials of the third and forth kinds, respectively.
Inspired by Andrews' work, Sun \cite{Sun23} and Yao \cite{Yao25} further studied Rogers--Ramanujan type identities by constructing Bailey pairs involving Chebyshev polynomials.

As an extension of Bailey pairs, Paule  \cite{Pau87} had introduced the idea of bilateral Bailey pairs in 1987. Later, Berkovich, McCoy and Schilling \cite{BMS96} defined the full version of  bilateral Bailey pair $(\alpha_n(a,q),\beta_n(a,q))$ related to $a$ for $n\in \mathbb{Z}$, which is a pair of sequences of rational functions such that
\begin{align}\label{bbp}
\beta_n(a,q)=\sum_{j=-\infty}^{n}\frac{\alpha_j(a,q)}{(q)_{n-j}(aq)_{n+j}}.
\end{align}

Then in 2007, Andrews and Warnaar \cite{AW07} gave the symmetric and asymmetric bilateral Bailey transforms.
In 2009, Guo, Jouhet and Zeng \cite{GZJ09} studied finite forms of the Rogers--Ramanujan identities by using the bilateral Bailey lemmas. Based on these works, Chu and Zhang \cite{Chu09,Chu10} obtained a generalized bilateral Bailey lemma with $a=1$, which leads to many identities of Rogers--Ramanujan type.

Rogers provided his second proof of  Rogers--Ramanujan identities in \cite{Rog17}, which was couched in
terms of Fourier series. In this paper, following Rogers' way, we obtain two asymmetric bilateral Bailey pairs
\begin{align}
(\alpha_n(1,q),\ \beta_n(1,q))=\Big(z^{n}q^{n+1\choose 2},\ \frac{(-1/z)_{n+1}(-zq)_n}{(q)_{2n+1}}\Big),\label{asymmetricbbp}
\end{align}
and
\begin{align}
(\alpha_n(1,q),\ \beta_n(1,q))=
\Big(\frac{z^{-n} q^{\binom{n}{2}} (1-q^{2n+1})}{1-z},\ \frac{(-1/z,-z)_n}{(q)_{2n}}\Big).
\end{align}
In Section~3, by fitting them into bilateral Bailey lemmas, we  obtain several Rogers--Ramanujan type identities and also identities on false theta functions, such as
\begin{align*}
1+\sum_{n=1}^\infty\frac{(-q^3;q^3)_{n-1}q^{\binom{n+1}{2}}}{(-q)_{n-1}(q)_n(q;q^2)_n}
=\frac{1}{(q;q^2)_\infty(q^2,q^3,q^9,q^{10};q^{12})_\infty},
\end{align*}
and
\begin{align*}
\sum_{n=0}^\infty(-1)^n\frac{(q^3;q^3)_n}{(q)_{2n+1}}q^{\binom{n+1}{2}}
=\sum_{n=0}^\infty q^{3n(3n+1)}(1-q^{12n+6}).
\end{align*}
In Section~4, by using the general bilateral Bailey lattice due to Dousse, Jouhet and Konan, we derive an asymmetric bilateral Bailey lemma (as given in Lemma \ref{bbl-ls}),  which leads to identities on Appell--Lerch series.
Furthermore, the asymmetric bilateral Bailey pairs can also lead to some identities of Andrews--Gordon type, the details can be seen in Section~5. In Section~6, by applying  Andrews and Warnaar's bilateral Bailey lemmas, we obtain some identities on false theta functions and generalized Hecke--type series.

\section{Preliminaries}
Throughout this paper, we adopt  standard notations and terminologies
for $q$-series as given in \cite{GR04}. We assume that $|q|<1$, then the $q$-shifted factorials are defined by
\[
(a)_n=(a;q)_n=\begin{cases}
1, & \text{\it if $n=0$}, \\[2mm]
\prod_{j=1}^{n}(1-aq^{j-1}), & \text{\it if $n\geq 1$}, \\[2mm]
\end{cases}
\]
and
\[
(a)_\infty=(a;q)_\infty=\prod_{n=0}^\infty (1-aq^n).
\]
There are more compact notations for the multiple $q$-shifted factorials:
\begin{align*}
&(a_1,a_2,\cdots,a_m)_n=(a_1,a_2,\cdots,a_m;q)_n=(a_1;q)_n(a_2;q)_n \cdots(a_m;q)_n,\\
&(a_1,a_2,\cdots,a_m)_{\infty}=(a_1,a_2,\cdots,a_m;q)_{\infty}=(a_1;q)_{\infty}(a_2;q)_{\infty}\cdots
(a_m;q)_{\infty}.
\end{align*}
The $q$-binomial coefficients, or the Gaussian polynomials are given by
\[
{n \brack k} =
  \dfrac{(q;q)_n}
{(q;q)_k(q;q)_{n-k}}.
\]

The $q$-binomial theorem \cite[(3.3.6)]{And84}, also called Cauchy's binomial theorem, is known as
\begin{align}
(z)_n=\sum^{n}_{j=0}{n\brack j}(-1)^j q^{\binom{j}{2}} z^j .\label{q-binomial-thm}
\end{align}

The famous Jacobi's triple product identity \cite[(II.28)]{GR04} is
\begin{align}
(z, q/z, q; q)_\infty=\sum_{n=-\infty}^{\infty}(-1)^nq^{\binom{n}{2}}z^n. \label{JTP}
\end{align}

For $|ab|<1$, Ramanujan's theta function \cite[p.35]{AB12} is given by
\begin{align*}
f(a,b):=\sum_{n=-\infty}^{\infty} a^{\frac{n(n+1)}{2}} b^{\frac{n(n-1)}{2}}=(-a, -b, ab; ab)_\infty,
\end{align*}
which satisfies that \cite[p.46]{AB12}
\begin{align}
f(a,b)=f(a^3 b, ab^3)+af(b/a, a^5 b^3). \label{f(a,b)}
\end{align}

In 1996, Berkovich, McCoy and Schilling  gave the following bilateral Bailey lemma in \cite{BMS96}.

\begin{lemma}[BMS' bilateral Bailey lemma] \label{bbpinfinite}
If $(\alpha_n,\,\beta_n)$ is a bilateral Bailey pair related to $a$, then
\begin{align}
&\sum_{n=-\infty}^{\infty} (x, y)_{n}\left(\frac{aq}{xy}\right)^{n}\beta_{n}(a,q)\\
&\qquad
=\frac{(aq/x,aq/y)_{\infty}}{(aq,aq/xy)_{\infty}}\sum_{n=-\infty}^{\infty} \frac{(x,y)_n}{(aq/x,aq/y)_{n}} \left(\frac{aq}{x y}\right)^{n}\alpha_{n}(a,q).\nonumber
\end{align}
\end{lemma}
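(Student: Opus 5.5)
The approach is to substitute the definition \eqref{bbp} of $\beta_n$ into the left-hand side, interchange the order of summation, and evaluate the resulting inner sum in closed form with a summation formula. Writing $m=n-j\ge 0$, the left-hand side becomes
\begin{align*}
\sum_{j=-\infty}^{\infty}\alpha_j(a,q)\sum_{m=0}^{\infty}\frac{(x,y)_{m+j}}{(q)_m(aq)_{m+2j}}\Big(\frac{aq}{xy}\Big)^{m+j}.
\end{align*}
The interchange is justified under the standing convergence assumptions, namely absolute convergence of the double series, which holds for instance when $|aq/xy|<1$ and $\alpha_j$ grows suitably.

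Next I split the shifted factorials as $(x)_{m+j}=(x)_j(xq^j)_m$, $(y)_{m+j}=(y)_j(yq^j)_m$ and $(aq)_{m+2j}=(aq)_{2j}(aq^{2j+1})_m$. For negative $j$ these splittings still hold, using the standard extension $(a)_n=(a)_\infty/(aq^n)_\infty$. The inner sum is then
\begin{align*}
\frac{(x,y)_j}{(aq)_{2j}}\Big(\frac{aq}{xy}\Big)^{j}\,{}_2\phi_1\!\left(\begin{matrix}xq^j,\ yq^j\\ aq^{2j+1}\end{matrix};q,\frac{aq}{xy}\right).
\end{align*}

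The argument is balanced, since $\frac{aq^{2j+1}}{xq^j\,yq^j}=\frac{aq}{xy}$. The $q$-Gauss sum therefore evaluates the ${}_2\phi_1$ as
\[
\frac{(aq^{j+1}/x,\,aq^{j+1}/y)_\infty}{(aq^{2j+1},\,aq/xy)_\infty}.
\]
Multiplying by $1/(aq)_{2j}$ turns $(aq^{2j+1})_\infty$ into $(aq)_\infty$. Writing $(aq^{j+1}/x)_\infty=(aq/x)_\infty/(aq/x)_j$, and likewise for $y$, then gives exactly
\[
\frac{(aq/x,aq/y)_\infty}{(aq,aq/xy)_\infty}\frac{(x,y)_j}{(aq/x,aq/y)_j}\Big(\frac{aq}{xy}\Big)^j\alpha_j,
\]
which is the claimed right-hand side.

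The main obstacle is the bookkeeping for negative indices $j$ and $n$, together with justifying the interchange of sums. For the indices, I would check that the identities $(c)_{m+j}=(c)_j(cq^j)_m$ remain valid for all integers via the infinite-product definition. I would also check that terms where $1/(q)_{n-j}$ would vanish, namely $j>n$, are consistently excluded. For the interchange, I would assume absolute convergence as an analytic hypothesis, as is implicit in the statement of Lemma~\ref{bbpinfinite}.
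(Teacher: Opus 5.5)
The paper gives no proof of this lemma; it is quoted from \cite{BMS96}. Your route (insert \eqref{bbp}, interchange the sums, and evaluate the inner ${}_2\phi_1$ by the $q$-Gauss sum) is the standard derivation. For generic $a$, meaning $aq^k\neq 1$ for every $k\in\mathbb{Z}$, your computation is correct, including the balancing $aq^{2j+1}/(xq^j\cdot yq^j)=aq/xy$ and the recombination of the products.

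\textbf{The gap.} The step that fails as written is your assertion that the splittings stay valid for all negative $j$ via $(c)_n=(c)_\infty/(cq^n)_\infty$. When $a\in q^{\mathbb{Z}}$ this becomes a $0\cdot\infty$ factorization, and $a=1$ is exactly the case the paper uses (e.g.\ in the proof of Lemma~\ref{bbl-ls}).

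Concretely, for $a=1$ and $j<0$:
\begin{itemize}
\item $1/(q)_{2j}=0$, while $1/(q^{2j+1})_m=\infty$ for $m\ge -2j$;
\item so the ${}_2\phi_1$ with denominator parameter $q^{2j+1}$ is undefined;
\item and the $q$-Gauss product has $(q^{2j+1})_\infty=0$ in its denominator.
\end{itemize}
The vanishing that matters here is that of $1/(q)_{n+j}$ for $n<-j$, not the vanishing of $1/(q)_{n-j}$ that you mention.

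\textbf{The repair.} Take $a=1$ and $j=-k<0$. Drop the zero terms $n<k$ and put $n=k+m$. The inner sum becomes
\[
\frac{(x,y)_k}{(q)_{2k}}\Big(\frac{q}{xy}\Big)^k
{}_2\phi_1\!\left(\begin{matrix}xq^{k},\ yq^{k}\\ q^{2k+1}\end{matrix};q,\frac{q}{xy}\right)
=\frac{(q/x,q/y)_\infty}{(q,q/xy)_\infty}\,\frac{(x,y)_k}{(q/x,q/y)_k}\Big(\frac{q}{xy}\Big)^k,
\]
now with a legitimate denominator parameter. Since $(c)_{-k}=(-q/c)^kq^{\binom{k}{2}}/(q/c)_k$, the kernel $\frac{(x,y)_n}{(q/x,q/y)_n}(q/xy)^n$ is invariant under $n\mapsto -n$. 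Hence this equals the required coefficient of $\alpha_{-k}$. For $a=q^{\ell}$ the same works after re-indexing by $n+j+\ell\ge 0$, using the symmetry $n\mapsto -n-\ell$ of the kernel.

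Alternatively, prove the identity for the coefficient of a fixed $\alpha_j$ at generic $a$ and let $a\to 1$. The terms with $n+j<0$ carry a factor $1-a$, so both sides are continuous there. This limit must be taken in the kernel identity, not in the pair $(\alpha_n,\beta_n)$, which is only given for one value of $a$.
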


Then in \cite{Chu09}, Chu and Zhang generalized the above result to the following bilateral Bailey chain.

\begin{lemma}\label{bbl}
Let $\lambda$ be a nonnegative integer. For a bilateral sequence $\{\alpha_n(1,q)\}_{n\in\mathbb{Z}}$, define the unilateral sequence $\{\beta_n(1,q)\}_{n\in\mathbb{N}}$ by
\begin{align}
\beta_{n}=\sum_{k=-n-\lambda}^{n} \frac{\alpha_{k}}{(q)_{n-k}( q)_{n+k+\lambda}}, \label{asymmetricbb}
\end{align}
then
\begin{align}
\beta_{n}^{\prime}+\mathfrak{R}_{\lambda}(n) = \sum_{k=-n-\lambda}^{n} \frac{\alpha_{k}^{\prime}}{(q)_{n-k}(q)_{n+k+\lambda}},\label{asymmetricbb-newpair}
\end{align}
provided that
\begin{align*}		
\alpha_{n}^{\prime}& = \frac{(x,y)_{n}}{(q^{1+\lambda}/x,q^{1+\lambda}/y)_{n}} \left(\frac{q^{1+\lambda}}{xy}\right)^{n}\alpha_{n},\\
\beta_{n}^{\prime}& = \frac{1}{(q^{1+\lambda}/x,q^{1+\lambda}/y)_{n}} \sum_{k=0}^{n}\frac{(q^{1+\lambda}/xy)_{n-k}\ (x,y)_{k}}{(q)_{n-k}}\left(\frac{q^{1+\lambda}}{xy}\right)^{k}\beta_{k},
\end{align*}
and the error--term $\mathfrak{R}_{\lambda}(n)$ is given by
\begin{align*}
\mathfrak{R}_{\lambda}(n) &=\sum_{k=1}^{\lambda-1} \frac{\alpha_{-k}}{(q^{1+\lambda} / x, q^{1+\lambda} / y)_{n}}  \sum_{i=1}^{\min \{k,\lambda-k\}} \frac{q^{i^{2}-i \lambda}}{(q)_{k-i}(q)_{\lambda-k-i}} \frac{\left(q^{1+\lambda} / x y\right)_{n+i}}{(q)_{n+i}(q/x, q/y)_{i}}.
\end{align*}
Note that, when $\lambda=0, 1$, the error--term $\mathfrak{R}_{\lambda}(n)=0$.
\end{lemma}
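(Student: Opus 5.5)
The plan is to substitute the definition \eqref{asymmetricbb} of $\beta_k$ into the formula for $\beta_n'$, interchange the order of summation, and evaluate the inner sum over $k$ by the $q$-Pfaff--Saalsch\"utz summation. This is the computation underlying Lemma~\ref{baileylem} with $a=q^\lambda$. Write $c=q^{1+\lambda}/xy$ and use $(q)_{k+j+\lambda}=(q)_\lambda (q^{1+\lambda})_{k+j}$. Then
\[
(q^{1+\lambda}/x,q^{1+\lambda}/y)_n\,\beta_n'=\sum_{j}\alpha_j\sum_{k}\frac{(c)_{n-k}(x,y)_k c^k}{(q)_{n-k}(q)_{k-j}(q)_{k+j+\lambda}},
\]
where for fixed $j\in[-n-\lambda,n]$ the index $k$ runs over $\max(0,j,-j-\lambda)\le k\le n$.

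I split the $j$-range into three parts.

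\emph{Case $j\ge 0$.} Here $k$ runs from $j$ to $n$, which is exactly the range in Bailey's lemma with $a=q^\lambda$. After shifting $k\to k+j$, the inner sum is a terminating balanced ${}_3\phi_2$. Pfaff--Saalsch\"utz evaluates it to
\[
\frac{(q^{1+\lambda}/x,q^{1+\lambda}/y)_n}{(q)_{n-j}(q)_{n+j+\lambda}}\cdot\frac{(x,y)_j}{(q^{1+\lambda}/x,q^{1+\lambda}/y)_j}c^j .
\]
This is precisely the $\alpha_j'$ term on the right of \eqref{asymmetricbb-newpair}.

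\emph{Case $j\le -\lambda$.} The kernel $1/((q)_{k-j}(q)_{k+j+\lambda})$ is invariant under $j\mapsto -j-\lambda$. I would verify, using $(a)_{-m}=(-q/a)^m q^{\binom m2}/(q/a)_m$, that the factor $(x,y)_j c^j/(q^{1+\lambda}/x,q^{1+\lambda}/y)_j$ is invariant under the same reflection. The lower limit is then $-j-\lambda\ge 0$, so this case reduces to the previous one and again produces $\alpha_j'$.

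\emph{Case $-\lambda<j<0$.} Here $\max(j,-j-\lambda)<0$. The Saalsch\"utz evaluation would require the inner sum to start at $k=\max(j,-j-\lambda)$, where the reciprocals $1/(q)_{k-j}$ and $1/(q)_{k+j+\lambda}$ are still nonzero. The actual sum starts at $k=0$, so it omits the terms with $k=-i$ for $1\le i\le \min(-j,\lambda+j)$. Setting $j=-k'$ with $1\le k'\le\lambda-1$, the $\alpha_{k'}'$-contribution therefore equals the $\beta'$-contribution plus these missing terms. The missing terms come from $k=-i$, where $(c)_{n-k}/(q)_{n-k}$ becomes $(c)_{n+i}/(q)_{n+i}$.

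Next I rewrite $(x,y)_{-i}c^{-i}$ with the negative-index formula. This gives
\[
(x,y)_{-i}\,c^{-i}=\frac{q^{i^2-i\lambda}}{(q/x,q/y)_i},
\]
up to a check of the powers of $x$, $y$ and $q$. Dividing by $(q^{1+\lambda}/x,q^{1+\lambda}/y)_n$ then yields exactly $\mathfrak R_\lambda(n)$. Collecting the three cases gives \eqref{asymmetricbb-newpair}.

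For $\lambda=0,1$ the third case is empty, so $\mathfrak R_\lambda(n)=0$. The main obstacle is the bookkeeping in the third case. Pfaff--Saalsch\"utz must be applied with a lower summation limit that is negative. The cleanest route I would try first is to extend $\beta_k$ formally to negative $k$, which makes the summand analytic in the shift. I would then check the exponent $q^{i^2-i\lambda}$ and the cancellation of the $x,y$ powers directly for small $\lambda$, say $\lambda=2$, before trusting the general sign and exponent.
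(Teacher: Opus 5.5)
The paper gives no proof of this lemma; it is quoted from Chu and Zhang \cite{Chu09}, so your proposal has to stand on its own. It does. Interchanging the sums and evaluating each inner sum by $q$-Pfaff--Saalsch\"utz is a complete proof. The error term $\mathfrak{R}_\lambda(n)$ is exactly the set of Saalsch\"utz terms with $k=-i<0$ that the range $0\le k\le n$ cuts off. The three points you left to check all go through.

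\emph{Reflection.} For one parameter put $g_x(m)=\frac{(x)_m}{(q^{1+\lambda}/x)_m}\bigl(q^{(1+\lambda)/2}/x\bigr)^m$. Two facts give the reflection:
\begin{itemize}
\item $(xq^{-\lambda})_{m+\lambda}=(xq^{-\lambda})_\lambda\,(x)_m$;
\item $(xq^{-\lambda})_\lambda/(q/x)_\lambda=(-x)^\lambda q^{-\binom{\lambda+1}{2}}$.
\end{itemize}
From these, $g_x(-m-\lambda)=(-1)^\lambda g_x(m)$. Since $g_x(m)g_y(m)=\frac{(x,y)_m}{(q^{1+\lambda}/x,q^{1+\lambda}/y)_m}c^m$, the two signs cancel, so your reflection step is valid.

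\emph{Third case.} No formal extension is needed. By the reflection you may assume $2j+\lambda\ge 0$. Putting $k=j+m$ turns the inner sum into
\[
\frac{(c)_{n-j}(x,y)_j c^j}{(q)_{n-j}(q)_{2j+\lambda}}\;{}_3\phi_2\bigl(q^{-(n-j)},xq^j,yq^j;\,q^{2j+\lambda+1},q^{1-n+j}/c;\,q,q\bigr).
\]
This series is balanced and has no vanishing denominators. Its Saalsch\"utz value is $\frac{(q^{1+\lambda+j}/x,\,q^{1+\lambda+j}/y)_{n-j}}{(q^{2j+\lambda+1},\,c)_{n-j}}$. The identity $(a)_n=(a)_j(aq^j)_{n-j}$ holds for all integers $j$ under the convention $(a)_{-m}=1/(aq^{-m})_m$, and it converts this value into your Case~1 closed form. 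So one computation covers every $j$. The three cases differ only in how much of the Saalsch\"utz range lies below $k=0$.

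Your idea of extending $\beta_k$ amounts to the same thing. Define $\beta_{-i}$ by \eqref{asymmetricbb} for $1\le i\le \lambda/2$. Then
\[
(q^{1+\lambda}/x,q^{1+\lambda}/y)_n\,\mathfrak{R}_\lambda(n)=\sum_{i\ge 1}\frac{(c)_{n+i}(x,y)_{-i}c^{-i}}{(q)_{n+i}}\,\beta_{-i},
\]
which is literally the $k<0$ part of the $\beta'$-sum.

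\emph{Exponent.} The exponent comes out exactly:
\[
(x,y)_{-i}c^{-i}=\frac{(q^2/xy)^i q^{i^2-i}}{(q/x,q/y)_i}\cdot\frac{(xy)^i}{q^{(1+\lambda)i}}=\frac{q^{i^2-i\lambda}}{(q/x,q/y)_i},
\]
with no leftover powers of $x$ or $y$. This matches $\mathfrak{R}_\lambda(n)$ term by term.

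For $\lambda=0,1$ the range $-\lambda<j<0$ is empty, so the error term vanishes, as stated.
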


In \eqref{asymmetricbb}, when $\lambda=0$, $(\alpha_n,\beta_n)$ is also called a {\it symmetric bilateral Bailey pair};  and if $\lambda\neq 0$, it forms an {\it asymmetric bilateral Bailey pair}. By substituting $ \{\alpha^{\prime}_{n}\}_{n\in\mathbb{Z}} $ and $\{\beta_{n}^{\prime}\}_{n\in\mathbb{N}} $ into \eqref{asymmetricbb-newpair} and letting $ n\rightarrow \infty $, the following result can be obtained.

\begin{lemma}\label{bblit}
For two sequences $\{\alpha_n(1,q)\}_{n\in\mathbb{Z}}$, $\{\beta_n(1,q)\}_{n\in\mathbb{N}}$ subject to \eqref{asymmetricbb},  there holds
\begin{align}
&\mathfrak{R}_{\lambda}+\sum_{k=0}^{\infty} \left(\frac{q^{1+\lambda}}{xy}\right)^{k}(x, y)_{k}\ \beta_{k} \label{bblemma}\\
&\ =\frac{(q^{1+\lambda}/x,q^{1+\lambda}/y)_{\infty}} {(q,q^{1+\lambda}/xy)_{\infty}}\sum_{k=-\infty}^{+\infty} \frac{(x,y)_k}{(q^{1+\lambda}/x,q^{1+\lambda}/y)_{k}} \left(\frac{q^{1+\lambda}}{xy}\right)^{k}\alpha_{k},\nonumber
\end{align}
where the error--term is given by
\begin{align*}
\mathfrak{R}_{\lambda}=\sum_{1\leq i \leq n < \lambda} \frac{q^{i^{2}-i\lambda}\ \alpha_{-n}} {(q)_{n-i}\ (q)_{\lambda-n-i}\ (q/x, q/y)_{i}}.
\end{align*}
\end{lemma}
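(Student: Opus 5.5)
The plan is to take Lemma~\ref{bbl} with the sequences $\alpha'_n,\beta'_n$ it provides and let $n\to\infty$ in \eqref{asymmetricbb-newpair}, justifying each limit by dominated convergence (Tannery's theorem). Throughout, $|q|<1$, and the parameters $x,y$ are assumed to satisfy $|q^{1+\lambda}/xy|<1$. We also assume the growth condition on $\alpha_k$ that is implicit in the statement, namely that the bilateral series on the right-hand side converges absolutely.

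\emph{Right-hand side.} Substitute $\alpha'_k$ and write the factor as
\[
\frac{1}{(q)_{n-k}(q)_{n+k+\lambda}}.
\]
For each fixed $k\in\mathbb{Z}$ this factor tends to $1/(q)_\infty^2$ as $n\to\infty$. It is also bounded uniformly in $n$ and $k$, since $1/(q)_m\le 1/(|q|;|q|)_\infty$. Tannery's theorem then gives the limit
\[
\frac{1}{(q)_\infty^2}\sum_{k\in\mathbb{Z}}\frac{(x,y)_k}{(q^{1+\lambda}/x,q^{1+\lambda}/y)_k}\Big(\frac{q^{1+\lambda}}{xy}\Big)^k\alpha_k .
\]

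\emph{The term $\beta'_n$.} Write $\beta'_n$ as the sum over $k$ of
\[
\frac{(q^{1+\lambda}/xy)_{n-k}}{(q)_{n-k}}\,(x,y)_k\Big(\frac{q^{1+\lambda}}{xy}\Big)^k\beta_k ,
\]
multiplied by $1/(q^{1+\lambda}/x,q^{1+\lambda}/y)_n$. As $n\to\infty$ each summand tends to its value with $(q^{1+\lambda}/xy)_\infty/(q)_\infty$ in place of the first ratio, and the prefactor tends to $1/(q^{1+\lambda}/x,q^{1+\lambda}/y)_\infty$. The first ratio is bounded uniformly in $n-k$, and $\sum_k |(x,y)_k(q^{1+\lambda}/xy)^k\beta_k|$ converges. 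Hence
\[
\beta'_n\;\to\;\frac{(q^{1+\lambda}/xy)_\infty}{(q,\,q^{1+\lambda}/x,\,q^{1+\lambda}/y)_\infty}\sum_{k\ge0}(x,y)_k\Big(\frac{q^{1+\lambda}}{xy}\Big)^k\beta_k .
\]

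\emph{Error term.} In $\mathfrak{R}_\lambda(n)$ the only $n$-dependence is through the ratio $(q^{1+\lambda}/xy)_{n+i}/(q)_{n+i}$ and the prefactor $1/(q^{1+\lambda}/x,q^{1+\lambda}/y)_n$. These tend to $(q^{1+\lambda}/xy)_\infty/(q)_\infty$ and $1/(q^{1+\lambda}/x,q^{1+\lambda}/y)_\infty$ respectively. Since the sum over $k$ and $i$ is finite, the limit is immediate:
\[
\mathfrak{R}_\lambda(n)\to\frac{(q^{1+\lambda}/xy)_\infty}{(q,\,q^{1+\lambda}/x,\,q^{1+\lambda}/y)_\infty}\,\mathfrak{R}_\lambda .
\]
Here the inner sum is reindexed by writing the outer index $k$ as $n$, which gives exactly the double sum over $1\le i\le n<\lambda$ in the statement. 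For $k\ge\lambda$ there are no terms, consistent with the original range $1\le k\le\lambda-1$.

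\emph{Conclusion.} Equate the three limits and multiply through by
\[
\frac{(q,\,q^{1+\lambda}/x,\,q^{1+\lambda}/y)_\infty}{(q^{1+\lambda}/xy)_\infty}.
\]
This yields \eqref{bblemma}, with the prefactor $(q^{1+\lambda}/x,q^{1+\lambda}/y)_\infty/(q,q^{1+\lambda}/xy)_\infty$ in front of the bilateral sum.

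The main obstacle is justifying the interchange of limit and sum on the bilateral side, because the range $-n-\lambda\le k\le n$ grows in both directions. This requires absolute convergence of the $\alpha$-series for negative $k$ as well. When $k\to-\infty$ the ratio $(x,y)_k/(q^{1+\lambda}/x,q^{1+\lambda}/y)_k$ becomes $(q^{-\lambda}x,q^{-\lambda}y)_{-k}/(q/x,q/y)_{-k}$ times a power of $q^{1+\lambda}/xy$. Combined with the factor $(q^{1+\lambda}/xy)^k$, this produces the power $(xy/q^{1+\lambda})^{|k|}$ times powers of $q^{-\lambda}$, so convergence needs an explicit hypothesis on $\alpha_{-k}$. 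I would state this as an assumption, checked later for the specific pairs used in the paper, whose $\alpha_k$ carry quadratic powers of $q$.
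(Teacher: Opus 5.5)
Your proposal is correct and is the paper's argument: the paper gets Lemma~\ref{bblit} in one line by substituting $\alpha'_n,\beta'_n$ from Lemma~\ref{bbl} into \eqref{asymmetricbb-newpair} and letting $n\to\infty$, and your Tannery-type bounds make that limit rigorous (matching the error-term ranges $1\le i\le\min\{k,\lambda-k\}$ and $1\le i\le n<\lambda$ uses the convention $1/(q)_j=0$ for $j<0$). One slip in your closing remark: for $k=-m<0$ the weight equals $\big(q^{1+\lambda}/xy\big)^{m}(xq^{-\lambda},yq^{-\lambda})_m/(q/x,q/y)_m$, so it decays like $|q^{1+\lambda}/xy|^{m}$ just as for $k>0$ rather than growing like $|xy/q^{1+\lambda}|^{m}$, and the hypothesis needed on $\alpha_{-k}$ is the same mild absolute-convergence condition as on $\alpha_k$.
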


Specially, when $\lambda=1$, we are led to the following weak forms of Lemma \ref{bblit}.

\begin{lemma}\label{nonsymbbailey} For asymmetric bilateral Bailey pair $(\alpha_n(1,q), \ \beta_n(1,q))$ with $\lambda=1$, we have
\begin{subequations}
\begin{align}
&\sum_{n=0}^{\infty}q^{n(n+1)}\beta_n
=\frac{1}{(q)_\infty}\sum_{n=-\infty}^{\infty}q^{n(n+1)}\alpha_n, \label{nonsymbbailey1}\\
&\sum_{n=0}^{\infty}q^{\binom{n+1}{2}}(-q)_n\beta_n
=\frac{(-q)_\infty}{(q)_\infty}
\sum_{n=-\infty}^{\infty}q^{\binom{n+1}{2}}\alpha_n, \label{nonsymbbailey2}\\
&\sum_{n=0}^{\infty}(-1)^nq^{\binom{n+1}{2}}(q)_n\beta_n
=\sum_{n=0}^{\infty}(-1)^nq^{\binom{n+1}{2}}(\alpha_n+\alpha_{-n-1}). \label{nonsymbbailey4}
\end{align}
\end{subequations}
\end{lemma}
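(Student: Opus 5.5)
Each identity comes from Lemma \ref{bblit} with $\lambda=1$. By Lemma \ref{bbl}, the error term $\mathfrak{R}_1$ vanishes: the sum over $1\le i\le n<\lambda=1$ is empty. So we have, for any admissible $x,y$,
\begin{align*}
\sum_{k=0}^{\infty} \left(\frac{q^{2}}{xy}\right)^{k}(x, y)_{k}\beta_{k}
=\frac{(q^{2}/x,q^{2}/y)_{\infty}} {(q,q^{2}/xy)_{\infty}}\sum_{k=-\infty}^{\infty} \frac{(x,y)_k}{(q^{2}/x,q^{2}/y)_{k}} \left(\frac{q^{2}}{xy}\right)^{k}\alpha_{k}.
\end{align*}
The plan is to specialize $x,y$ (letting some tend to $\infty$) and simplify the bilateral right-hand side. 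Throughout, $(x)_k$ for negative $k$ is read as $(x)_k=1/(xq^{k})_{-k}$.

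For \eqref{nonsymbbailey1}, let $x,y\to\infty$. Then $(x,y)_k(q^2/xy)^k\to q^{k^2+k}$ on the left. On the right, $(q^2/x,q^2/y)_\infty\to1$, $(q^2/xy)_\infty\to1$, and $(q^2/x,q^2/y)_k\to1$ for every $k\in\mathbb{Z}$. This gives the stated formula with prefactor $1/(q)_\infty$.

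For \eqref{nonsymbbailey2}, take $y=-q$ and let $x\to\infty$. The left side becomes $\sum q^{\binom{k+1}{2}}(-q)_k\beta_k$, since $(x)_k(q/x)^k\to(-1)^kq^{\binom{k}{2}}q^k$ and the signs cancel against $(-q/x)^k$. On the right, $(q^2/y)_k=(-q)_k$ cancels $(y)_k=(-q)_k$ for all integers $k$, and the prefactor becomes $(-q)_\infty/(q)_\infty$. The limit in $x$ gives the weight $q^{\binom{k+1}{2}}$, also for negative $k$, which should be checked using the negative-index convention.

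For \eqref{nonsymbbailey4}, which I expect to be the delicate case, take $y=q$ and $x\to\infty$. The left side becomes $\sum(-1)^kq^{\binom{k+1}{2}}(q)_k\beta_k$. On the right, however, $(q^2/xy)_\infty\to1$ and the prefactor contains $(q)_\infty/(q)_\infty\cdot(q^2/x)_\infty$. Meanwhile the ratio $(q)_k/(q)_k$ is $1$ only formally. For negative $k$, $(y)_k=(q)_k=1/(q^{1+k})_{-k}$ has a pole, because $(q^{1+k})_{-k}$ contains the factor $1-q^0=0$ once $k\le-1$. So one should not set $y=q$ directly.

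Instead, keep $y$ generic near $q$ and compute
\begin{align*}
\frac{(y)_k}{(q^2/y)_k}\,\frac{(q^2/y)_\infty}{(q^2/xy)_\infty}.
\end{align*}
Pair the term $k$ with the term $-k-1$. Using $(y)_{-m}=(-1)^m q^{\binom{m+1}{2}}y^{-m}/(q/y)_m$, the negative-index terms become $\frac{(q/y)_{\cdot}}{\cdot}$-type ratios. The limit $y\to q$ should then collapse these to exactly $\alpha_{-k-1}$ with the same weight $(-1)^kq^{\binom{k+1}{2}}$, and the sum over $k\ge0$ gives $\alpha_k+\alpha_{-k-1}$.

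An alternative is to start directly from \eqref{asymmetricbb} with $\lambda=1$. Multiply by $(-1)^nq^{\binom{n+1}{2}}(q)_n$ and interchange summation. The inner sum over $n$ is then a terminating or convergent ${}_1\phi_1$/$q$-Chu--Vandermonde type sum, which can be evaluated via \eqref{q-binomial-thm}; the reflection $k\mapsto-k-1$ makes the kernels for $\alpha_k$ and $\alpha_{-k-1}$ coincide. I would verify the final formula by comparing the coefficient of each $\alpha_j$. The main obstacle is this limiting and index bookkeeping for negative $k$.
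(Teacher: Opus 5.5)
Your plan is the paper's proof: specialize Lemma~\ref{bblit} at $\lambda=1$ (where $\mathfrak{R}_1=0$) by taking $x,y\to\infty$, then $x\to\infty,\ y=-q$, then $x\to\infty,\ y\to q$, and in the last case reindex the negative part by $n\mapsto -n-1$. The step you leave as ``should collapse'' is exactly the fact the paper invokes, namely $\lim_{y\to q}(y)_n/(q^2/y)_n=-1$ for $n<0$ (and $=1$ for $n\ge 0$). Your formula for $(y)_{-m}$ gives this at once: the ratio equals $(q/y)^{2m}\frac{(y/q)_m}{(q/y)_m}$, whose only vanishing factors contribute $(1-y/q)/(1-q/y)=-y/q\to-1$, and the remaining factors tend to $1$. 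It is this sign, multiplied against the weight $(-1)^{-k-1}q^{\binom{-k}{2}}$, that produces $+\alpha_{-k-1}$ rather than $-\alpha_{-k-1}$.
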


\begin{proof}
In Lemma \ref{bblit}, setting $a=1,\lambda=1$, and taking $x\rightarrow\infty, y\rightarrow\infty$; $x\rightarrow\infty, y\mapsto -q$, it leads to \eqref{nonsymbbailey1} and \eqref{nonsymbbailey2}, respectively. To derive \eqref{nonsymbbailey4},  we note that
\[
\lim_{y\rightarrow q}\frac{(y)_n}{(q^2/y)_n}=\begin{cases}
1, & \text{\it $n\geq 0$}, \\[2mm]
-1, & \text{\it $n<0$},
\end{cases}
\]
which was also stated in \cite{Chu10}.
Thus by taking $x\rightarrow\infty, y\mapsto q$, the right hand side of \eqref{bblemma} becomes  to
\begin{align*}
\sum_{n=0}^{\infty}(-1)^nq^{\binom{n+1}{2}}\alpha_n
-\sum_{n=-\infty}^{-1} (-1)^n q^{\binom{n+1}{2}} \alpha_n.
\end{align*}
Then substituting $n\mapsto -n-1$ into the second sum, it leads to the right hand side of \eqref{nonsymbbailey4}.
\end{proof}

Moreover, from an asymmetric bilateral Bailey pair $(\alpha_{n}(1,q), \beta_{n}(1,q))$ with $\lambda=1$, by employing the bilateral Bailey chain as given in Lemma \ref{bbl} iteratively, we can derive a sequence of bilateral Bailey pairs, which leads to the following identities on multisums.

\begin{corollary}\label{weakiterat}
If $(\alpha_{n}(1,q), \ \beta_{n}(1,q))$ is an asymmetric bilateral Bailey pair with $\lambda=1$, then
\begin{subequations}
\begin{align}
&\sum_{n_1\geq\cdots\geq n_k\geq 0}\frac{q^{\sum_{i=1}^{k}n_i(n_i+1)}}{(q)_{n_1-n_2}\cdots (q)_{n_{k-1}-n_k}}\beta_{n_k}
=\frac{1}{(q)_\infty}\sum_{n=-\infty}^{\infty}q^{kn(n+1)}\alpha_n, \label{weakiterate1}\\
&\sum_{n_1\geq\cdots\geq n_k\geq 0}\frac{q^{\sum_{i=1}^{k}\binom{n_i+1}{2}}(-q)_{n_k}}
{(q)_{n_1-n_2}\cdots (q)_{n_{k-1}-n_k}}\beta_{n_k}
=\frac{(-q)_\infty}{(q)_\infty}\sum_{n=-\infty}^{\infty}
q^{k\binom{n+1}{2}}\alpha_n, \label{weakiterate2}\\
&\sum_{n_1\geq\cdots\geq n_k\geq 0} \frac{(-1)^{\sum_{i=1}^{k}n_i} q^{\sum_{i=1}^{k} \binom{n_i+1}{2}} \ (q)_{n_k}} {(q)_{n_1-n_2}\cdots (q)_{n_{k-1}-n_k}}\beta_{n_k}\label{weakiterate4}\\
&\qquad\qquad\qquad\qquad\qquad
=\sum_{n=0}^{\infty} (-1)^{kn} q^{k\binom{n+1}{2}} (\alpha_n+\alpha_{-n-1}). \nonumber
\end{align}
\end{subequations}
\end{corollary}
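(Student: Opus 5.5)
Iterate Lemma \ref{bbl} with $\lambda=1$, where $\mathfrak{R}_1(n)=0$, $k-1$ times and then finish with Lemma \ref{nonsymbbailey}. For \eqref{weakiterate1}, let $x,y\to\infty$ in Lemma \ref{bbl}. Since $(x)_n(q^{2}/x)_n^{-1}(q^2/xy)^n\cdot(y)_n(q^2/y)_n^{-1}\to q^{n(n+1)}$ for every $n\in\mathbb{Z}$, we get
\[
\alpha_n'=q^{n(n+1)}\alpha_n,\qquad \beta_n'=\sum_{j=0}^n\frac{q^{j(j+1)}}{(q)_{n-j}}\beta_j .
\]
This is again an asymmetric bilateral Bailey pair with $\lambda=1$. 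The limit for negative $n$ should be checked using $(x)_{-m}=1/(xq^{-m})_m$; the same monomial $q^{n(n+1)}$ results, as in the standard Bailey chain.

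Applying this $k-1$ times gives a pair
\[
\alpha^{(k-1)}_n=q^{(k-1)n(n+1)}\alpha_n,\qquad \beta^{(k-1)}_{n_1}=\sum_{n_1\ge n_2\ge\cdots\ge n_k\ge0}\frac{q^{\sum_{i=2}^k n_i(n_i+1)}}{(q)_{n_1-n_2}\cdots(q)_{n_{k-1}-n_k}}\beta_{n_k}.
\]
This follows by a straightforward induction after relabelling the indices. Inserting this pair into \eqref{nonsymbbailey1} adds the factor $q^{n_1(n_1+1)}$ and the outer sum, which gives \eqref{weakiterate1}.

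For \eqref{weakiterate2} and \eqref{weakiterate4} I would use the same $x,y\to\infty$ chain for the first $k-1$ steps, but track the weight differently. The intermediate steps must produce $q^{\binom{n_i+1}{2}}$ and signs rather than $q^{n_i(n_i+1)}$. So I would instead take the chain with $x\to\infty$ and $y=-q$ (respectively $y=q$) at each step.

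For $y=-q$, the factor $(-q)_n/(-q)_n$ in $\alpha'$ cancels, leaving $\alpha'_n=q^{\binom{n+1}{2}}\alpha_n$. The $\beta'$ then becomes
\[
\beta'_n=\frac{1}{(-q)_n}\sum_j\frac{(-1)_{n-j}\cdots}{\cdots},
\]
which is not a clean single sum. I therefore expect the intended chain for \eqref{weakiterate2} to be the $x,y\to\infty$ one for the $\beta$-side weights together with a final application of \eqref{nonsymbbailey2}. But then the exponents would be mixed: $n_i(n_i+1)$ inside and $\binom{n_1+1}{2}$ outside.

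Reconciling the stated weights is the main obstacle. To resolve it, I would first test $k=2$ directly. Take a Bailey pair relative to $1$ with $\lambda=1$, use the $y=-q$ chain with $x\to\infty$, and expand
\[
\beta'_n=\frac{1}{(-q)_n}\sum_{j}\frac{(-1)_{n-j}(-q)_j q^{\binom{j+1}{2}}}{(q)_{n-j}}\beta_j .
\]
Substituting this into \eqref{nonsymbbailey2} gives $\sum_n q^{\binom{n+1}{2}}(-q)_n\beta'_n$, where $(-q)_n$ cancels the prefactor. Then I would swap the order of summation and evaluate the inner sum in $n$ via a $q$-binomial/Cauchy sum (from \eqref{q-binomial-thm}, or Lemma \ref{bblit} with an $\alpha$ concentrated at one point) to verify the stated structure with denominators $(q)_{n_1-n_2}$.

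The analogous computation with $y\to q$ handles \eqref{weakiterate4}. Here the sign rule $\lim_{y\to q}(y)_n/(q^2/y)_n=\pm1$ from the proof of Lemma \ref{nonsymbbailey} applies at every step. Since $(-1)^k q^{k\binom{n+1}{2}}$ is invariant under $n\mapsto-n-1$, combining the negative indices gives $\alpha_n+\alpha_{-n-1}$ on the right side.

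Once the $k=2$ case fixes the correct one-step chain, induction on $k$ finishes all three identities. The $n\to\infty$ limit of Lemma \ref{bbl} produces the prefactors $1/(q)_\infty$, $(-q)_\infty/(q)_\infty$, and $1$ (the last because $q^2/(xy)\to0$ with $y=q$ cancels the $(q)_\infty$ factors).
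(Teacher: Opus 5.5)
Your argument for \eqref{weakiterate1} is the paper's: specialize Lemma \ref{bbl} at $x,y\to\infty$, iterate $k-1$ times, and close with \eqref{nonsymbbailey1}. For \eqref{weakiterate2} and \eqref{weakiterate4}, however, the proposal does not reach a proof.

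You start with the right chain: $x\to\infty$ and $y=-q$ (resp.\ $y\to q$) at every step, the same specializations that give \eqref{nonsymbbailey2} and \eqref{nonsymbbailey4}. But you miscompute $\beta'_n$. The factor $(q^{1+\lambda}/xy)_{n-j}=(q^{2}/xy)_{n-j}$ tends to $1$ as $x\to\infty$, so no $(-1)_{n-j}$ appears. For $y=-q$ one gets
\[
\alpha'_n=q^{\binom{n+1}{2}}\alpha_n,\qquad
\beta'_n=\frac{1}{(-q)_n}\sum_{j=0}^{n}\frac{(-q)_j\,q^{\binom{j+1}{2}}}{(q)_{n-j}}\,\beta_j ,
\]
which is a clean single sum. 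Having discarded it, you fall back on the $x,y\to\infty$ chain followed by \eqref{nonsymbbailey2}. As you noticed, that gives $q^{(2k-1)\binom{n+1}{2}}$ rather than $q^{k\binom{n+1}{2}}$. You then defer everything to a $k=2$ test built around an inner $q$-binomial summation that does not actually arise. The missing step is just telescoping. In the iterated pair, the prefactor $1/(-q)_{n_i}$ of each $\beta'$ is cancelled by the factor $(-q)_{n_i}$ produced by the next application of the chain; for $i=1$ it is cancelled by the weight $(-q)_{n_1}$ in \eqref{nonsymbbailey2}. So only $(-q)_{n_k}$ survives, and $\alpha^{(k-1)}_n=q^{(k-1)\binom{n+1}{2}}\alpha_n$ gives the stated right-hand side.

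For \eqref{weakiterate4} the same works with $x\to\infty$, $y\to q$. This limit is legitimate because \eqref{asymmetricbb-newpair} is a finite sum for fixed $n$. One gets
\[
\alpha'_n=(-1)^n\epsilon_n\,q^{\binom{n+1}{2}}\alpha_n,\qquad
\beta'_n=\frac{1}{(q)_n}\sum_{j=0}^{n}\frac{(-1)^j(q)_j\,q^{\binom{j+1}{2}}}{(q)_{n-j}}\,\beta_j ,
\]
with $\epsilon_n=1$ for $n\ge 0$ and $\epsilon_n=-1$ for $n<0$, and again the factors $(q)_{n_i}$ telescope. Your invariance remark should go through $\epsilon_n$. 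The weight $(-1)^{kn}q^{k\binom{n+1}{2}}$ by itself changes by $(-1)^k$ under $n\mapsto -n-1$. What is invariant is the one-step weight $(-1)^n\epsilon_n q^{\binom{n+1}{2}}$. Hence $\alpha^{(k-1)}_{-n-1}=(-1)^{(k-1)n}q^{(k-1)\binom{n+1}{2}}\alpha_{-n-1}$ carries the same weight as $\alpha^{(k-1)}_n$. Then \eqref{nonsymbbailey4} yields $\sum_{n\ge 0}(-1)^{kn}q^{k\binom{n+1}{2}}(\alpha_n+\alpha_{-n-1})$.

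With these two one-step chains written down, your induction goes through. Without them, the proposal establishes only \eqref{weakiterate1}.
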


\begin{proof}
Letting $a=1$, $\lambda=1$, $x\rightarrow\infty, y\rightarrow\infty$ in the bilateral Bailey chain as given in Lemma \ref{bbl}, we  get
\begin{align*}
&\alpha^{(1)}_n=q^{n(n+1)} \alpha_n,\quad \
\beta^{(1)}_n=\sum_{j=0}^{n} \frac{q^{j(j+1)}}{(q)_{n-j}}\beta_{j}.
\end{align*}
Iterating the above relation $k-1$ times, we obtain the following bilateral Bailey pair
\begin{align*}
&\alpha^{(k-1)}_n=q^{(k-1)n(n+1)} \alpha_n,\\
&\beta^{(k-1)}_n=\sum_{n_{k-1}=0}^{n}\sum_{n_{k-2}=0}^{n_{k-1}}\sum_{n_{k-3}=0}^{n_{k-2}} \cdots \sum_{n_{1}=0}^{n_{2}} \frac{q^{\sum_{i=1}^{k-1}n_i(n_i+1)}}{(q)_{n-n_{k-1}(q)_{n_{k-1}-n_{k-2}}\cdots (q)_{n_2-n_1}}}\beta_{n_1}.
\end{align*}
Substituting the above bilateral Bailey pair into \eqref{nonsymbbailey1}, we have
\begin{align*}
&\sum_{n\geq 0} \sum_{n_{k-1}=0}^{n}\sum_{n_{k-2}=0}^{n_{k-1}}\cdots \sum_{n_{1}=0}^{n_{2}} \frac{q^{n(n+1)+\sum_{i=1}^{k-1}n_i(n_i+1)}}{(q)_{n-n_{k-1}(q)_{n_{k-1}-n_{k-2}}\cdots (q)_{n_2-n_1}}}\beta_{n_1} \\
&\qquad\qquad\qquad\qquad\qquad\qquad\qquad\quad= \frac{1}{(q)_\infty}\sum_{n=-\infty}^\infty q^{kn(n+1)}\alpha_n.
\end{align*}
Then by replacing the variables on the left hand side with $n\mapsto n_1$ and $n_j\mapsto n_{k+1-j}$ for $1\leq j \leq k-1$, we arrive at \eqref{weakiterate1}. Simliarly, \eqref{weakiterate2} and \eqref{weakiterate4} can be proved by applying the weak forms  \eqref{nonsymbbailey2} and \eqref{nonsymbbailey4}, respectively.
\end{proof}

Recently, Dousse, Jouhet and Konan \cite{JFI25} gave a general bilateral Bailey lattice, which is stated as follows.

\begin{lemma}\label{lem:baileylattices}
If $(\alpha_n,\,\beta_n)$ is a bilateral Bailey pair related to $a$, then  $(\alpha'_n,\,\beta'_n)$ is a bilateral Bailey pair related to $a/q$, where
\begin{align}\label{eq:baileylatticesalpha}
\alpha'_n=\frac{1-a}{1-b}\left(\frac{1-bq^n}{1-aq^{2n}} \alpha_n -\frac{q^{n-1}(aq^{n-1}-b)}{1-aq^{2n-2}} \alpha_{n-1} \right),
\end{align}
and
\begin{align}\label{eq:baileylatticesbeta}
\beta'_n=\frac{(bq)_n}{(b)_n}\beta_n.
\end{align}
\end{lemma}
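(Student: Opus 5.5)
The plan is to verify the defining relation \eqref{bbp} for the new pair directly. Concretely, I would show that
\[
\sum_{j=-\infty}^{n}\frac{\alpha'_j}{(q)_{n-j}(a)_{n+j}}=\frac{(bq)_n}{(b)_n}\beta_n .
\]
Here the left side is the bilateral Bailey sum relative to $a/q$, since $(aq/q)_{n+j}=(a)_{n+j}$. Throughout, $a$ and $b$ are treated as generic, so all factors $1-aq^{2j}$ are nonzero. I also assume the bilateral series converge absolutely, as is implicit in the definition of a bilateral Bailey pair, so that splitting and reindexing are allowed.

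\emph{Step 1: split and shift.} Insert \eqref{eq:baileylatticesalpha} into the left side and split it into two sums. In the second sum (the one containing $\alpha_{j-1}$), replace $j$ by $j+1$. It then becomes a sum over $j\le n-1$ of $\alpha_j$ with denominator $(q)_{n-j-1}(a)_{n+j+1}$. Extending this sum to $j=n$ adds nothing, because $1/(q)_{-1}=0$. Both sums now run over $j\le n$ and can be combined termwise in $\alpha_j$. The coefficient of $\alpha_j$ is
\[
\frac{1-a}{(1-b)(1-aq^{2j})}\left[\frac{1-bq^j}{(q)_{n-j}(a)_{n+j}}-\frac{q^{j}(aq^{j}-b)}{(q)_{n-j-1}(a)_{n+j+1}}\right].
\]

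\emph{Step 2: the key algebraic identity.} Put the bracket over the common denominator $(q)_{n-j}(a)_{n+j+1}$. Its numerator is
\[
(1-bq^j)(1-aq^{n+j})-q^j(aq^j-b)(1-q^{n-j}).
\]
Expanding, the terms $\pm aq^{n+j}$ and $\pm bq^j$ cancel. What remains is $1-aq^{2j}-bq^n+abq^{n+2j}=(1-aq^{2j})(1-bq^n)$. This factorization is the crux of the argument: the factor $1-aq^{2j}$ cancels, and the leftover factor $1-bq^n$ no longer depends on $j$.

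\emph{Step 3: conclude.} Use $(1-a)/(a)_{n+j+1}=1/(aq)_{n+j}$. The coefficient of $\alpha_j$ then reduces to
\[
\frac{1-bq^n}{1-b}\cdot\frac{1}{(q)_{n-j}(aq)_{n+j}}.
\]
Summing over $j\le n$ and applying \eqref{bbp} for $(\alpha_n,\beta_n)$ gives $\frac{1-bq^n}{1-b}\beta_n$. Finally, $\frac{1-bq^n}{1-b}=\frac{(bq)_n}{(b)_n}$, which is \eqref{eq:baileylatticesbeta}.

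The only real obstacle is the factorization in Step 2, and it is a direct check once the index shift in Step 1 has been done. Besides that, some care is needed with the boundary term at $j=n$ and with justifying the rearrangement of the bilateral series.
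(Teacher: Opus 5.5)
Your proof is correct. The paper does not prove this lemma; it quotes it from Dousse, Jouhet and Konan \cite{JFI25}, so there is no in-paper argument to compare against. Your direct verification is a complete, self-contained proof: you shift the $\alpha_{j-1}$ part, then use the factorization $(1-bq^j)(1-aq^{n+j})-q^j(aq^j-b)(1-q^{n-j})=(1-aq^{2j})(1-bq^n)$ together with $(a)_{n+j+1}=(1-a)(aq)_{n+j}$. It holds for every $n\in\mathbb{Z}$, because the subscript identities you use, and $(bq)_n/(b)_n=(1-bq^n)/(1-b)$, remain valid for negative indices under $(x)_m=(x)_\infty/(xq^m)_\infty$.

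You can also drop the absolute-convergence assumption rather than leave it as a caveat. Work with partial sums $\sum_{j=-M}^{n}$. The index shift then leaves exactly one uncancelled term,
\[
-\frac{1-a}{1-b}\cdot\frac{q^{-M-1}(aq^{-M-1}-b)}{(1-aq^{-2M-2})\,(q)_{n+M}\,(a)_{n-M}}\,\alpha_{-M-1}.
\]
When $a\neq 0$, the ratio of this term to the summand $\alpha_{-M-1}/\big((q)_{n+M+1}(aq)_{n-M-1}\big)$ of $\beta_n$ tends to $1/(1-b)$. Hence the leftover term tends to $0$ whenever the series defining $\beta_n$ converges. This gives both the convergence of the series for $\beta'_n$ and the identity itself, and it covers the case $a=q$ used later in the paper.
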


Using the bilateral Bailey lattice described above, we derive the following asymmetric bilateral Bailey lemma.

\begin{lemma}\label{bbl-ls}
For asymmetric bilateral Bailey $(\alpha_n(1,q),\ \beta_n(1,q))$ with $\lambda=1$, we have
\begin{align}\label{asymmetric-bilateral-Bailey-lemma}
&\sum_{n=0}^{\infty} (x, y)_{n} (1-bq^n) \left(\frac{q}{xy}\right)^{n}\ \beta_{n}\\
&=\frac{(q/x,q/y)_{\infty}} {(q,q/xy)_{\infty}} \sum_{n=-\infty}^{\infty} \frac{(x,y)_n \ (q/xy)^n}{(q/x,q/y)_{n}} \Big(\frac{1-bq^n} {1-q^{2n+1}}\ \alpha_{n} - \frac{q^{n-1} (q^n-b)}{1-q^{2n-1}}\ \alpha_{n-1} \Big).\nonumber
\end{align}
\end{lemma}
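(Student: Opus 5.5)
The idea is to view an asymmetric bilateral Bailey pair with $\lambda=1$ and $a=1$ as an ordinary bilateral Bailey pair relative to $a=q$. Then Lemma \ref{lem:baileylattices} moves it back down to $a=1$, and Lemma \ref{bbpinfinite} finishes the job.

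\textbf{Step 1: reinterpret the pair at $a=q$.} Since $(aq)_{n+j}=(q^2)_{n+j}=(q)_{n+j+1}/(1-q)$ when $a=q$, relation \eqref{bbp} with $a=q$ reads
\[
\beta_n=\sum_{j\le n}\frac{(1-q)\,\tilde\alpha_j}{(q)_{n-j}(q)_{n+j+1}}.
\]
We use the standard convention $1/(q)_m=(q^{m+1})_\infty/(q)_\infty=0$ for $m<0$. Then the terms with $j<-n-1$ vanish, and this is exactly \eqref{asymmetricbb} with $\lambda=1$ once we set $\tilde\alpha_j=\alpha_j/(1-q)$. The same convention gives $\beta_n=0$ for every $n<0$: then $j\le n$ forces $n+j+1\le 2n+1<0$, so every term vanishes. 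Hence $(\tilde\alpha_n,\beta_n)_{n\in\mathbb Z}$ is a bilateral Bailey pair relative to $a=q$.

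\textbf{Step 2: apply the lattice with $a=q$.} Lemma \ref{lem:baileylattices} produces a bilateral pair relative to $a=1$ with
\[
\alpha'_n=\frac{1}{1-b}\Big(\frac{1-bq^n}{1-q^{2n+1}}\alpha_n-\frac{q^{n-1}(q^n-b)}{1-q^{2n-1}}\alpha_{n-1}\Big),
\qquad \beta'_n=\frac{1-bq^n}{1-b}\,\beta_n .
\]
In $\alpha'_n$ the prefactor $(1-q)/(1-b)$ has absorbed the factor $1/(1-q)$ in $\tilde\alpha$. In $\beta'_n$ we used $(bq)_n/(b)_n=(1-bq^n)/(1-b)$. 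Note that $\beta'_n=0$ for $n<0$, because $\beta_n=0$ there.

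\textbf{Step 3: insert into Lemma \ref{bbpinfinite} with $a=1$.} On the left side only $n\ge0$ survives, since $\beta'_n=0$ for $n<0$. Multiplying both sides by $1-b$ then gives \eqref{asymmetric-bilateral-Bailey-lemma} directly, with prefactor $(q/x,q/y)_\infty/(q,q/xy)_\infty$.

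\textbf{Main obstacle.} I expect the main difficulty to be making the negative-index conventions rigorous. This means checking that $1/(q)_m=0$ for $m<0$ is compatible with the conventions used in Lemmas \ref{lem:baileylattices} and \ref{bbpinfinite}, including at $n=0$, where the lattice formula involves $\alpha_{-1}$. It also means justifying convergence of the bilateral sums, for instance by imposing $|q/xy|<1$ and suitable growth conditions on $\alpha_n$. I would handle these points by first treating $b$, $x$, $y$ as generic, and then checking the isolated terms separately; the factor $1/(1-q^{2n-1})$ at $n=0$, which equals $1/(1-q^{-1})$, is one such term.
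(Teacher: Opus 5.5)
Your proposal is correct and follows the paper's proof: both reinterpret the $\lambda=1$ pair as a bilateral pair relative to $a=q$, apply the Dousse--Jouhet--Konan lattice (Lemma \ref{lem:baileylattices}) with $a=q$, and insert the result into the BMS lemma with $a=1$. The only difference is bookkeeping: the paper uses the pair $(\alpha_n,(1-q)\beta_n)$ and later divides by $1-q$, whereas you rescale $\alpha_n$ by $1/(1-q)$. Your checks that $\beta_n=0$ for $n<0$ and that the $n=0$ term is regular are details the paper leaves implicit.
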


\begin{proof}
Note that an asymmetric bilateral Bailey pair $(\alpha_{n}(1,q), \beta_n(1,q))$ with $a=1$ and $ \lambda=1$ can be rewritten as
\begin{align*}
(1-q)\beta_{n}(1,q)=\sum_{k=-n-1}^{n} \frac{\alpha_{k}(1,q)}{(q)_{n-k} (q^2)_{n+k}},
\end{align*}
which leads to the following bilateral Bailey pair with $a=q$
\begin{align*}
\big(\alpha_{n}(1,q),   (1-q)\beta_{n}(1,q) \big).
\end{align*}
Then inserting the above bilateral Bailey pair  into the bilateral Bailey lattice as given in Lemma \ref{lem:baileylattices} with $a=q$, and dividing $1-q$ on both hand sides of the resulting identity, we obtain a new bilateral Bailey pair with $a=1$
\begin{align*}
&\alpha_{n}'(1,q)=\frac{1}{1-b} \Big(\frac{1-bq^n}{1-q^{2n+1}}\alpha_{n}(1,q) - \frac{q^{n-1}(q^n-b)}{1-q^{2n-1}}\alpha_{n-1}(1,q) \Big),\\[2mm]
&\beta_{n}'(1,q)=\frac{(bq)_n}{(b)_n}\beta_{n}(1,q).
\end{align*}
Substituting $(\alpha_{n}'(1,q), \beta_{n}'(1,q))$ into the bilateral Bailey Lemma \ref{bbpinfinite}, the proof is complete.
\end{proof}

\section{Two asymmetric bilateral Bailey pairs}

In this section, we give two asymmetric bilateral Bailey pairs following Rogers' way to prove Rogers--Ramanujan identities in \cite{Rog17}, in which Rogers couched his work in
terms of Fourier series.  As pointed by Andrews \cite{And19}, the following formula was the  finite version of the result given by Rogers \cite{Rog17}
\begin{align}
(-qe^{i\theta}, -qe^{-i\theta})_n =\prod_{j=1}^n (1+2 q^j \cos\theta + q^{2j}). \label{triangle fomular}
\end{align}

Firstly, we expand the above formula by applying the $q$-binomial theorem to derive the following asymmetric bilateral Bailey pair.

\begin{lemma}
We have asymmetric bilateral Bailey with $a=1$ and $\lambda=1$
\begin{align}
(\alpha_n(1,q),\ \beta_n(1,q))=\Big(z^{n}q^{n+1\choose 2},\ \frac{(-1/z)_{n+1}(-zq)_n}{(q)_{2n+1}}\Big).\label{asymmetricbb1}
\end{align}
\end{lemma}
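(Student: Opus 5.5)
We need to show that, with $\lambda=1$ and $a=1$,
\[
\frac{(-1/z)_{n+1}(-zq)_n}{(q)_{2n+1}}=\sum_{k=-n-1}^{n}\frac{z^k q^{\binom{k+1}{2}}}{(q)_{n-k}(q)_{n+k+1}}.
\]
The plan is to multiply through by $(q)_{2n+1}$ and recognise both sides as a single $q$-binomial expansion. The target identity becomes
\[
(-1/z)_{n+1}(-zq)_n=\sum_{k=-n-1}^{n}{2n+1\brack n-k} z^k q^{\binom{k+1}{2}}.
\]

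First I rewrite the left side as one finite product of length $2n+1$. Reversing the factors of $(-1/z)_{n+1}$ gives
\[
(-1/z)_{n+1}=\prod_{j=0}^{n}(1+z^{-1}q^j)=z^{-n-1}q^{\binom{n+1}{2}}\prod_{j=0}^{n}(1+zq^{-j}).
\]
Combining this with $(-zq)_n=\prod_{j=1}^n(1+zq^j)$, the left side equals
\[
z^{-n-1}q^{\binom{n+1}{2}}\prod_{j=-n}^{n}(1+zq^{j}).
\]
Pulling out $q^{-n}$ from each factor, this is
\[
z^{-n-1}q^{\binom{n+1}{2}}(-zq^{-n})_{2n+1}.
\]

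Next I apply the $q$-binomial theorem \eqref{q-binomial-thm} with $2n+1$ in place of $n$ and $-zq^{-n}$ in place of $z$:
\[
(-zq^{-n})_{2n+1}=\sum_{j=0}^{2n+1}{2n+1\brack j}q^{\binom{j}{2}-nj}z^j.
\]
Substituting $j=k+n+1$ makes $k$ run from $-n-1$ to $n$, and the prefactor $z^{-n-1}$ turns $z^j$ into $z^k$. The Gaussian coefficient ${2n+1\brack n+1+k}$ equals ${2n+1\brack n-k}$ by symmetry, which matches the right side.

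The only real check is the exponent of $q$:
\[
\binom{n+1}{2}+\binom{k+n+1}{2}-n(k+n+1)=\binom{k+1}{2}.
\]
Expanding, the left side is
\[
\tfrac12\big[n^2+n+(k+n+1)(k+n)-2n(k+n+1)\big]=\tfrac12\big[k^2+k\big],
\]
as required. This bookkeeping of the powers of $q$ and the index shift is the main place where an error could occur; otherwise the argument is a direct application of \eqref{q-binomial-thm}. Dividing back by $(q)_{2n+1}$ gives exactly the defining relation \eqref{asymmetricbb} with $\lambda=1$, so $(\alpha_n,\beta_n)$ in \eqref{asymmetricbb1} is an asymmetric bilateral Bailey pair.
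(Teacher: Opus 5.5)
Your proof is correct and takes essentially the same route as the paper: both write the product as an explicit prefactor times $(-zq^{-n})_{2n+1}$, expand with the $q$-binomial theorem \eqref{q-binomial-thm}, and reindex to obtain the truncated triple product identity \eqref{Andrewidentity1-transform}. The only difference is cosmetic: you work directly with $(-1/z)_{n+1}(-zq)_n$, whereas the paper starts from $(-q/z,-qz)_n$, carries a factor $1/(1+z)$, and then substitutes $z\mapsto z^{-1}$, so your index shift and exponent check are slightly more direct.
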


\begin{proof}
We first expand  the left hand side of \eqref{triangle fomular}  to be
\begin{align*}
(-q/z, -qz)_n & =(-qz)_n\ z^{-n} q^{\binom{n+1}{2}} (-zq^{-n})_n\\
&=\frac{q^{\binom{n+1}{2}}}{(1+z) z^n} (-zq^{-n})_{2n+1}.
\end{align*}
Then by applying the $q$-binomial theorem \eqref{q-binomial-thm}, it leads to that
\begin{align*}
(-q/z, -qz)_n &=\frac{q^{\binom{n+1}{2}}}{(1+z)z^n}\sum_{j=0}^{2n+1}{2n+1 \brack j}(-1)^jq^{\binom{j}{2}}(-zq^{-n})^j.
\end{align*}
Substituting $j\mapsto n-j$ into the sum on the right hand side and then setting $z\mapsto z^{-1}$, it turns to be
\begin{align}\label{Andrewidentity1-transform}
(-1/z)_{n+1}(-zq)_n =\sum_{j=-n-1}^{n} {2n+1 \brack n-j} q^{\binom{j+1}{2}}z^{j},
\end{align}
which completes the proof by comparing with the definition of the asymmetric bilateral Bailey pair \eqref{asymmetricbb} with $a=1$ and  $\lambda=1$.
\end{proof}

Specially, when  $n\rightarrow\infty$ in \eqref{Andrewidentity1-transform}, it implies to Jacobi's triple product identity \eqref{JTP}, which means that \eqref{Andrewidentity1-transform} can be seen as a truncated form of Jacobi's triple product identity. If we set $z=e^{i\theta}$ in \eqref{Andrewidentity1-transform}, it leads to Andrews' identity \eqref{Andrewidentity1}. Taking $q\mapsto q^2, z\mapsto zq^{-1}$ in \eqref{asymmetricbb1}, it yields the asymmetric bilateral Bailey pair as given in \cite[P.187]{AW07}. Furthermore, we find that the bilateral Bailey pair \eqref{asymmetricbb1} is also a special case of the one given in \cite[P. 366]{Chu09} with $\delta=1$.

By inserting the asymmetric bilateral Bailey pair \eqref{asymmetricbb1} into bilateral Bailey lemmas, it will lead to identities of Rogers--Ramanujan type.  By fitting the asymmetric bilateral Bailey pair \eqref{asymmetricbb1} into \eqref{nonsymbbailey1} and \eqref{nonsymbbailey2},  we obtain the following results, respectively. The infinite products on the right hand sides are obtained by using Jacobi's triple product identity \eqref{JTP}.

\begin{theorem}\label{nonsymbbailey30}
We have
\begin{align}
&\sum_{n=0}^\infty \frac{(-q/z,-zq)_n}{(q)_{2n+1}} q^{n(n+1)} =\frac{(-q^3/z,-zq^3,q^3;q^3)_\infty}{(q)_\infty},\label{nonsymbbailey301}\\
&\sum_{n=0}^\infty \frac{(-q/z,-zq,-q)_n}{(q)_{2n+1}} q^{\binom{n+1}{2}}
=\frac{(-q^2/z,-zq^2;q^2)_\infty}{(q;q^2)^2_\infty}.\label{nonsymbbailey302}
\end{align}
\end{theorem}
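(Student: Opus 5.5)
The plan is to substitute the asymmetric bilateral Bailey pair \eqref{asymmetricbb1} (which has $a=1$ and $\lambda=1$) into the weak forms \eqref{nonsymbbailey1} and \eqref{nonsymbbailey2} of Lemma \ref{nonsymbbailey}. Then evaluate the resulting bilateral sums on the right with Jacobi's triple product identity \eqref{JTP}, and finally cancel a common factor $1+1/z$.

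On the left-hand side I will use
\[
(-1/z)_{n+1}=(1+1/z)(-q/z)_n,
\]
so that $\beta_n=(1+1/z)\,(-q/z,-zq)_n/(q)_{2n+1}$. Hence the left sides of \eqref{nonsymbbailey1} and \eqref{nonsymbbailey2} become $(1+1/z)$ times the left sides of \eqref{nonsymbbailey301} and \eqref{nonsymbbailey302}, respectively.

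For \eqref{nonsymbbailey301}, the right side of \eqref{nonsymbbailey1} is
\[
\frac{1}{(q)_\infty}\sum_{n=-\infty}^{\infty} z^n q^{n(n+1)+\binom{n+1}{2}}=\frac{1}{(q)_\infty}\sum_{n=-\infty}^{\infty} z^n q^{3\binom{n}{2}+3n}.
\]
This is \eqref{JTP} with $q\mapsto q^3$ and $z\mapsto -zq^3$, and it equals
\[
\frac{(-zq^3,-1/z,q^3;q^3)_\infty}{(q)_\infty}.
\]
Writing $(-1/z;q^3)_\infty=(1+1/z)(-q^3/z;q^3)_\infty$ and cancelling $1+1/z$ from both sides gives \eqref{nonsymbbailey301}.

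For \eqref{nonsymbbailey302}, the right side of \eqref{nonsymbbailey2} is
\[
\frac{(-q)_\infty}{(q)_\infty}\sum_{n=-\infty}^{\infty} z^n q^{n(n+1)}=\frac{(-q)_\infty}{(q)_\infty}\sum_{n=-\infty}^{\infty} z^n q^{2\binom{n}{2}+2n}.
\]
This is \eqref{JTP} with $q\mapsto q^2$ and $z\mapsto -zq^2$, and it equals
\[
\frac{(-q)_\infty}{(q)_\infty}\,(1+1/z)\,(-zq^2,-q^2/z,q^2;q^2)_\infty.
\]
After cancelling $1+1/z$, it remains to simplify the constant, using Euler's identity $(-q)_\infty=1/(q;q^2)_\infty$ and $(q^2;q^2)_\infty/(q)_\infty=1/(q;q^2)_\infty$. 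Together these give $(-q)_\infty(q^2;q^2)_\infty/(q)_\infty=1/(q;q^2)_\infty^2$, which yields \eqref{nonsymbbailey302}.

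I do not expect a genuine obstacle here. The only points needing care are the bookkeeping of the exponents when matching with \eqref{JTP}, which fixes the shift of $z$ by $q^3$ or $q^2$, and noting that the factor $1+1/z$ appears on both sides. Cancelling it is valid for $z\neq -1$, and the identity then extends by continuity. Convergence is unproblematic because of the quadratic powers of $q$ on both sides.
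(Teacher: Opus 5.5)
Your proposal is correct and follows the paper's route: substitute the pair \eqref{asymmetricbb1} into \eqref{nonsymbbailey1} and \eqref{nonsymbbailey2}, evaluate the bilateral sums with Jacobi's triple product \eqref{JTP}, and cancel the common factor $1+1/z$, a step the paper leaves implicit. Your exponent matching ($q\mapsto q^3$, $z\mapsto -zq^3$ and $q\mapsto q^2$, $z\mapsto -zq^2$) and the simplification $(-q)_\infty(q^2;q^2)_\infty/(q)_\infty=1/(q;q^2)_\infty^2$ are both right.
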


Note that if we take $z=e^{i\theta}$ and then set $\theta$ to be some special values in Theorem \ref{nonsymbbailey30}, it will reduce to some identities as given in  \cite{And19, Yao25}. For example, when $z=e^{i\theta}$ and $\theta=0$ in \eqref{nonsymbbailey301}, we have
\begin{align*}
\sum_{n=0}^\infty\frac{(-q)^2_n}{(q)_{2n+1}}q^{n(n+1)}
=\frac{(-q^3,-q^3,q^3;q^3)_\infty}{(q)_\infty},
\end{align*}
which can be seen in \cite[Theorem 4.3]{And19}.

Then by inserting the asymmetric bilateral Bailey pair \eqref{asymmetricbb1} into \eqref{nonsymbbailey4}, we get the following identity on false theta functions.

\begin{theorem}\label{nonsymbbailey31}
We have
\begin{align}
\sum_{n=0}^\infty(-1)^n\frac{(-1/z)_{n+1} (-zq, q)_n}{(q)_{2n+1}}q^{\binom{n+1}{2}}
=\Big(\sum_{n=0}^\infty - \sum_{n=-\infty}^{-1} \Big) \ (-1)^n q^{n(n+1)} z^{n}.
\end{align}
\end{theorem}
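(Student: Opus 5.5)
Insert the asymmetric bilateral Bailey pair \eqref{asymmetricbb1} into the weak form \eqref{nonsymbbailey4} of Lemma \ref{nonsymbbailey}. This is legitimate: \eqref{asymmetricbb1} was shown to be a pair with $a=1$ and $\lambda=1$, which is exactly the hypothesis of Lemma \ref{nonsymbbailey}.

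The left hand side follows by direct substitution of $\beta_n=(-1/z)_{n+1}(-zq)_n/(q)_{2n+1}$ into $\sum_{n\ge0}(-1)^nq^{\binom{n+1}{2}}(q)_n\beta_n$. This gives
\begin{align*}
\sum_{n=0}^\infty(-1)^n\frac{(-1/z)_{n+1}(-zq,q)_n}{(q)_{2n+1}}q^{\binom{n+1}{2}},
\end{align*}
which is the stated left side.

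For the right hand side, compute $\alpha_n+\alpha_{-n-1}$ with $\alpha_n=z^nq^{\binom{n+1}{2}}$. Since $\binom{-n}{2}=\binom{n+1}{2}$, we get $\alpha_{-n-1}=z^{-n-1}q^{\binom{n+1}{2}}$. The right side of \eqref{nonsymbbailey4} is therefore
\begin{align*}
\sum_{n=0}^\infty(-1)^nq^{n(n+1)}z^n+\sum_{n=0}^\infty(-1)^nq^{n(n+1)}z^{-n-1}.
\end{align*}
In the second sum substitute $n\mapsto -n-1$, so that $n$ runs over $-\infty,\dots,-1$. Because $(-n-1)(-n)=n(n+1)$, the power of $q$ is unchanged and the power of $z$ becomes $z^{n}$. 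The sign becomes $(-1)^{-n-1}=-(-1)^n$. This produces $-\sum_{n=-\infty}^{-1}(-1)^nq^{n(n+1)}z^n$, which is exactly the stated false theta form.

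The proof has no real obstacle. The only points needing care are the sign flip $(-1)^{-n-1}=-(-1)^n$ and checking that the limit $y\to q$ underlying \eqref{nonsymbbailey4} is valid for this pair. Convergence is clear for $|q|<1$ and $z\neq0$, since the $\alpha_n$ decay like $q^{n^2/2}$ in both directions.
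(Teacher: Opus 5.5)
Your proposal is correct and is exactly the paper's argument: insert the pair \eqref{asymmetricbb1} into \eqref{nonsymbbailey4}, read off the left side directly, and reindex the $\alpha_{-n-1}$ terms via $n\mapsto -n-1$ to obtain the false theta form on the right. Your sign and exponent bookkeeping, $(-1)^{-n-1}=-(-1)^n$ and $\binom{-n}{2}=\binom{n+1}{2}$, checks out.
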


The notion of false theta functions was introduced by Rogers \cite{Rog17}, which can be seen as instances of classical theta series  except for an alteration of the signs of some of its terms.
In 2003, Berndt and Yee \cite{BY03} studied the combinatorial proofs for some identities on false theta functions. In Chapter 9 of Ramanujan's lost notebook \cite{AB05}, Andrews and Berndt established some false theta function identities arising from the Rogers--Fine identity.
More related  works on false theta functions can be seen in \cite{BMS96,Chu10}.

By letting $z=e^{i\theta}$ in Theorem \ref{nonsymbbailey31}, and taking $\theta=0, \frac{\pi}{2}, \frac{\pi}{3}, \frac{2\pi}{3}$, respectively, we obtain the following results.

\begin{corollary}
We have
\begin{align}
&\sum_{n=0}^\infty(-1)^n\frac{(-q)_n}{(q;q^2)_{n+1}}q^{\binom{n+1}{2}}
=\sum_{n=0}^\infty(-1)^nq^{n(n+1)},\label{nonsymbbailey310}\\
&\sum_{n=0}^\infty(-1)^n\frac{(-q^2;q^2)_n}{(q^{n+1})_{n+1}}q^{\binom{n+1}{2}}
=\sum_{n=0}^\infty(-1)^nq^{2n(2n+1)}(1+q^{4n+2}),\label{nonsymbbailey311}\\
&\sum_{n=0}^\infty(-1)^n\frac{(q^3;q^3)_n}{(q)_{2n+1}}q^{\binom{n+1}{2}}
=\sum_{n=0}^\infty q^{3n(3n+1)}(1-q^{12n+6}),\label{nonsymbbailey312}\\
&\sum_{n=0}^\infty(-1)^n\frac{(-q^3;q^3)_n}{(q^{n+1})_{n+1}(-q)_n}
q^{\binom{n+1}{2}}\label{nonsymbbailey313}\\
&\qquad\qquad\qquad
=\sum_{n=0}^\infty(-1)^nq^{3n(3n+1)}(1+2q^{6n+2}+q^{12n+6}).\nonumber
\end{align}
\end{corollary}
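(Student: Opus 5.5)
The plan is to specialize Theorem \ref{nonsymbbailey31} at $z=e^{i\theta}$ for $\theta=0,\frac{\pi}{2},\frac{\pi}{3},\frac{2\pi}{3}$. In each case I would simplify the left side with elementary product manipulations and the right side by splitting residues. On the left, write $(-1/z)_{n+1}(-zq)_n=(1+1/z)(-q/z,-zq)_n$. With $z=e^{i\theta}$, the factor $1+e^{-i\theta}$ is not a $q$-product and must be balanced against the right side. The right side of Theorem \ref{nonsymbbailey31}, after $n\mapsto -n-1$ in the negative part, becomes
\begin{align*}
\sum_{n=0}^\infty (-1)^n q^{n(n+1)}\big(z^n + z^{-n-1}\big)=(1+z^{-1})\sum_{n=0}^\infty(-1)^nq^{n(n+1)}\frac{z^n+z^{-n-1}}{1+z^{-1}}.
\end{align*}
So I divide both sides by $1+1/z$. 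The left side then becomes $\sum_n (-1)^n(-q/z,-zq,q)_n q^{\binom{n+1}{2}}/(q)_{2n+1}$, with $(-q/z,-zq)_n=\prod_{j=1}^n(1+2q^j\cos\theta+q^{2j})$. On the right the quotient $(z^{n}+z^{-n-1})/(1+z^{-1})=\cos((2n+1)\theta/2)/\cos(\theta/2)$ is real.

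Case by case, I would proceed as follows.
\begin{itemize}
\item For $\theta=0$ the product is $(-q)_n^2$. Cancelling $(q)_n(-q)_n=(q^2;q^2)_n$ against the even factors of $(q)_{2n+1}$ leaves $(q;q^2)_{n+1}$, which gives \eqref{nonsymbbailey310} with right side $\sum(-1)^nq^{n(n+1)}$.
\item For $\theta=\pi/2$ the product is $(-q^2;q^2)_n$, and $(q)_{2n+1}/(q)_n=(q^{n+1})_{n+1}$. The cosine ratio is $\cos((2n+1)\pi/4)/\cos(\pi/4)$, which runs through $1,-1,-1,1$ periodically mod $4$. Combined with $(-1)^n$, this yields the pattern $+,+,-,-$. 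Grouping $n=2m,2m+1$ gives $(-1)^m q^{2m(2m+1)}(1+q^{4m+2})$.
\item For $\theta=\pi/3$ the factor is $1+q^j+q^{2j}=(1-q^{3j})/(1-q^j)$, so the product is $(q^3;q^3)_n/(q)_n$, which cancels the $(q)_n$ and leaves $(q^3;q^3)_n/(q)_{2n+1}$. The ratio $\cos((2n+1)\pi/6)/\cos(\pi/6)$ has period $6$ with values $1,0,-1,-1,0,1$. Times $(-1)^n$, the surviving terms are $n\equiv0$ with sign $+$ and $n\equiv2$ with sign $-$ mod $3$ (period $6$ overall after sign check). Setting $n=3m$ and $n=3m+2$, the exponents are $3m(3m+1)$ and $3m(3m+1)+12m+6$, matching \eqref{nonsymbbailey312}.
\item For $\theta=2\pi/3$ the factor is $1-q^j+q^{2j}=(1+q^{3j})/(1+q^j)$, giving $(-q^3;q^3)_n/(-q)_n$, and $(q)_{2n+1}/(q)_n=(q^{n+1})_{n+1}$. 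The ratio $\cos((2n+1)\pi/3)/\cos(\pi/3)$ takes values $1,-2,1$ with period $3$. Times $(-1)^n$, grouping $n=3m,3m+1,3m+2$ produces $(-1)^m q^{3m(3m+1)}(1+2q^{6m+2}+q^{12m+6})$, since $(3m+1)(3m+2)-3m(3m+1)=6m+2$ and $(3m+2)(3m+3)-3m(3m+1)=12m+6$.
\end{itemize}

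The main obstacle is bookkeeping rather than any conceptual step. Dividing by $1+1/z$ is legitimate because it is nonzero at these $\theta$. The real care goes into the sign and periodicity patterns on the right side, which I would verify term by term for the first period in each case. Convergence is absolute for $|q|<1$, so the regrouping is justified.
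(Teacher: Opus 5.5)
Your proposal is correct and is essentially the paper's proof: you specialize Theorem~\ref{nonsymbbailey31} at $z=e^{i\theta}$, divide by $1+e^{-i\theta}$, simplify $\prod_{j=1}^n(1+2q^j\cos\theta+q^{2j})$ on the left, and sort the right side by residue classes, with your ratio $\cos\big((2n+1)\theta/2\big)/\cos(\theta/2)$ being a tidy way to organize that bookkeeping. All four sign patterns you list check out; for $\theta=\pi/3$ the signed pattern $+,0,-$ has period $3$, so the parenthetical about period $6$ is unnecessary but harmless.
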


\begin{proof}
We mainly give the proof of identity \eqref{nonsymbbailey313}, and others are similar. Taking $\theta=\frac{2\pi}{3}$  into Theorem \ref{nonsymbbailey31}, and multiplying both sides by $\frac{1}{1+e^{-\frac{2}{3}\pi i}}$, for the left hand side we have
\begin{align*}
&\sum_{n=0}^\infty\frac{(-1)^nq^{\binom{n+1}{2}}}{(q^{n+1})_{n+1}}
(-qe^{-\frac{2}{3}\pi i},-qe^{\frac{2}{3}\pi i})_n\\
&=\sum_{n=0}^\infty\frac{(-1)^nq^{\binom{n+1}{2}}}{(q^{n+1})_{n+1}}
\prod_{j=1}^{n}(1+q^{j} e^{-\frac{2}{3}\pi i}) (1+q^{j} e^{\frac{2}{3}\pi i})\\
&=\sum_{n=0}^\infty\frac{(-1)^nq^{\binom{n+1}{2}}}{(q^{n+1})_{n+1}}
\prod_{j=1}^{n}(1-q^{j}+q^{2j})\\
&=\sum_{n=0}^\infty(-1)^n\frac{(-q^3;q^3)_n}{{(q^{n+1})_{n+1}}(-q)_n}
q^{\binom{n+1}{2}}.
\end{align*}
For the right hand side of Theorem \ref{nonsymbbailey31}, it becomes
\begin{align*}
\frac{1}{1+e^{-\frac{2}{3}\pi i}}\sum_{n=0}^\infty(-1)^nq^{n(n+1)}
(e^{\frac{2}{3}\pi in}+e^{-\frac{2}{3}\pi i(n+1)}).
\end{align*}
By considering the residue classes of $n$ modulo $3$ in the above expression, it turns to  be
\begin{align*}
\sum_{n=0}^\infty(-1)^nq^{3n(3n+1)}+2\sum_{n=0}^\infty
(-1)^nq^{(3n+1)(3n+2)}+\sum_{n=0}^\infty(-1)^nq^{(3n+2)(3n+3)},
\end{align*}
which completes the proof by simplifying.
\end{proof}

Next, from identity \eqref{Andrewidentity2} due to Andrews, we obtain another asymmetric bilateral Bailey pair.

\begin{lemma}
If $|z|< 1$, the following pair of function
\begin{align}
(\alpha_n(1,q),\ \beta_n(1,q))=
\Big(\frac{z^{-n} q^{\binom{n}{2}} (1-q^{2n+1})}{1-z},\ \frac{(-z,-1/z)_n}{(q)_{2n}}\Big) \label{asymmetricbb2}
\end{align}
forms an asymmetric bilateral Bailey pair with $a=1$ and  $\lambda=1$.
\end{lemma}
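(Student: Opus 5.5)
We prove the equivalent polynomial identity
\begin{align*}
(-z,-1/z)_n=\sum_{k=-n-1}^{n}\frac{(q)_{2n}}{(q)_{n-k}(q)_{n+k+1}}\,\frac{z^{-k}q^{\binom{k}{2}}(1-q^{2k+1})}{1-z}.
\end{align*}
It is obtained by multiplying the defining relation \eqref{asymmetricbb} (with $a=1$, $\lambda=1$) by $(q)_{2n}$. The strategy follows the proof of \eqref{asymmetricbb1}: expand the left side with the $q$-binomial theorem \eqref{q-binomial-thm}, then multiply by $1-z$ and compare coefficients of powers of $z$.

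\emph{Step 1: a single $q$-shifted factorial.} Reversing the product gives
\begin{align*}
(-1/z)_n=\prod_{j=0}^{n-1}(1+q^j/z)=z^{-n}q^{\binom{n}{2}}(-zq^{1-n})_n .
\end{align*}
Multiplying by $(-z)_n$ yields
\begin{align*}
(-z,-1/z)_n=z^{-n}q^{\binom n2}(-zq^{1-n})_{2n}.
\end{align*}

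\emph{Step 2: expand.} Apply \eqref{q-binomial-thm} to $(-zq^{1-n})_{2n}$ and substitute $j=n-k$. This should give a truncated Jacobi triple product of the form
\begin{align*}
(-z,-1/z)_n=\sum_{k=-n}^{n}{2n\brack n-k}q^{\binom{k}{2}}z^{-k}.
\end{align*}
The exponent $\binom{n}{2}+\binom{j}{2}+(1-n)j$ with $j=n-k$ should reduce to $\binom{k}{2}$; this is a routine check. The identity is the analogue of \eqref{Andrewidentity1-transform} and of Andrews' \eqref{Andrewidentity2}.

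\emph{Step 3: multiply by $1-z$ and compare coefficients.} Since $z\neq 1$, which holds under $|z|<1$, it suffices to show that $(1-z)$ times the sum in Step 2 equals
\begin{align*}
\sum_{k=-n-1}^{n}\frac{(q)_{2n}(1-q^{2k+1})}{(q)_{n-k}(q)_{n+k+1}}q^{\binom{k}{2}}z^{-k}.
\end{align*}
Comparing coefficients of $z^{-k}$ reduces this to the termwise identity
\begin{align*}
{2n\brack n-k}q^{\binom{k}{2}}-{2n\brack n-k-1}q^{\binom{k+1}{2}}=\frac{(q)_{2n}(1-q^{2k+1})}{(q)_{n-k}(q)_{n+k+1}}q^{\binom{k}{2}},
\end{align*}
with out-of-range Gaussian coefficients taken to be zero. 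Dividing by $q^{\binom k2}$ and using $\binom{k+1}{2}-\binom{k}{2}=k$, it becomes
\begin{align*}
(1-q^{n+k+1})-q^{k}(1-q^{n-k})=1-q^{2k+1}
\end{align*}
after clearing the common factor $(q)_{2n}/\big((q)_{n-k}(q)_{n+k+1}\big)$. The last equality is immediate.

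The boundary indices $k=-n-1$ and $k=n$ must be checked separately. At these indices one of the two Gaussian coefficients vanishes, and the right side specializes consistently; for instance, at $k=-n-1$ the factor $1-q^{2k+1}=1-q^{-2n-1}$ must match $-q^{\binom{-n}{2}-\binom{-n-1}{2}}$. The main obstacle I expect is bookkeeping: getting the exponent in Step 2 and these endpoint terms exactly right. This is where sign or index shifts are most likely to go wrong.
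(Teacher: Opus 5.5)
There is a genuine gap, and it is at the start. Step 1 is false for $n\geq 1$. Both $(-z)_n$ and $(-zq^{1-n})_n$ contain the factor $1+z$, so their product has $(1+z)^2$ and no factor $1+zq^n$. By contrast, $(-zq^{1-n})_{2n}=\prod_{m=1-n}^{n}(1+zq^m)$ contains each factor once. The correct statement is $(-z,-1/z)_n=(1+z)\,z^{-n}q^{\binom n2}(-zq^{1-n})_{2n-1}$.

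Your exponent bookkeeping in Step 2 is right, but what it expands is $z^{-n}q^{\binom n2}(-zq^{1-n})_{2n}=(-1/z,-zq)_n$. For $n=1$ your sum is $z^{-1}+1+q+qz$, while $(-z,-1/z)_1=z^{-1}+2+z$. So Step 3 cannot close. The ``immediate'' equality is in fact wrong: $(1-q^{n+k+1})-q^k(1-q^{n-k})=1-q^k+q^n-q^{n+k+1}$, which for $n=1$, $k=0$ is $q-q^2$, not $1-q$. The endpoint check you deferred would also have caught this: at $k=-n-1$ your left side is $-q^{\binom{n+1}{2}}$, while the right side is $-q^{\binom n2}$.

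The strategy survives if you keep the extra factor. Write $(-z,-1/z)_n=(1+z)(-1/z)_n(-zq)_{n-1}$ and expand the last two factors by \eqref{Andrewidentity1-transform} with $n\mapsto n-1$:
\[
(-1/z)_n(-zq)_{n-1}=\sum_{k=1-n}^{n}{2n-1\brack n-k}q^{\binom k2}z^{-k}.
\]
Multiplying by $1-z$ now produces $1-z^2$. With $d_k={2n-1\brack n-k}q^{\binom k2}$, the coefficient of $z^{-k}$ is $d_k-d_{k+2}$. After removing the common factor $(q)_{2n-1}q^{\binom k2}/\big((q)_{n-k}(q)_{n+k+1}\big)$, the identity you need is
\[
(1-q^{n+k})(1-q^{n+k+1})-q^{2k+1}(1-q^{n-k-1})(1-q^{n-k})=(1-q^{2n})(1-q^{2k+1}),
\]
which holds. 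The endpoints $k=-n-1$ and $k=n$ are covered automatically under the convention $1/(q)_m=0$ for $m<0$.

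Repaired this way, your route derives the identity from the $q$-binomial theorem alone. The paper instead quotes Andrews' Chebyshev identity \eqref{Andrewidentity2}, substitutes $W_j=(z^{j+1}-z^{-j})/(z-1)$ with $z=e^{i\theta}$, and reindexes $j\mapsto -j-1$ in the $z^{j+1}$ part.
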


\begin{proof}
Denote $z=e^{i\theta}$, then $x=\cos \theta =\frac{z +z^{-1}}{2}$, and $\sin \theta =\frac{z-z^{-1}}{2i}$. By noting the classical formula \eqref{triangle fomular},
the left hand side of identity \eqref{Andrewidentity2} turns to be
\begin{align*}
( -z, -1/z)_n.
\end{align*}
Then recall the definition of Chebyshev polynomials of the  fourth kind
\begin{align*}
W_n(x)=\frac{\sin(n+\frac{1}{2})\theta}{\sin\frac{1}{2}\theta}=\frac{e^{i(n+1)\theta}-e^{-in\theta}}{e^{i\theta}-1},
\end{align*}
hence identity \eqref{Andrewidentity2} becomes
\begin{align*}
(-z, -1/z)_n =\sum_{j=0}^n \frac{q^{j\choose 2} (1-q^{2j+1}) (q)_{2n}}{(q)_{n-j} (q)_{n+j+1}}
\frac{(z^{j+1}-z^{-j})}{z-1}.
\end{align*}
Taking $j\mapsto -j-1$ into the first sum with $z^{j+1}$ of the right hand side, it can be rewritten as
\begin{align}\label{Andrewidentity2-transform}
\frac{(-z, -1/z)_n}{(q)_{2n}}
=\frac{1}{1-z} \sum_{j=-n-1}^n \frac{z^{-j} q^{j\choose 2} (1-q^{2j+1})}{(q)_{n-j} (q)_{n+j+1}},
\end{align}
which completes the proof.
\end{proof}

Identity \eqref{Andrewidentity2-transform} also can be seen as a truncated form of the Jacobi's triple product identity.

Now, by inserting the asymmetric bilateral Bailey pair \eqref{asymmetricbb2} into formulas \eqref{nonsymbbailey1}--\eqref{nonsymbbailey4}, we obtain the following identities.

\begin{theorem}\label{asymbbaileyweak2}
We have
\begin{align}
&\sum_{n=0}^\infty\frac{(-1/z, -z)_n}{(q)_{2n}} q^{n(n+1)}\label{asymbbaileyweak20}\\
&\ =\frac{1}{(1-z) (q)_\infty} \big((-q^2/z, -zq,q^3;q^3)_\infty -z(-q/z, -zq^2,q^3;q^3)_\infty \big),\nonumber\\
&\sum_{n=0}^\infty \frac{(-1/z, -z)_n}{(q)_n(q;q^2)_n} q^{\binom{n+1}{2}}
=\frac{(-q)_\infty}{(q)_\infty} (-q/z, -zq,q^2;q^2)_\infty,\label{asymbbaileyweak21}\\
&\sum_{n=0}^\infty \frac{(-1)^n (-1/z, -z)_n}
{(-q)_n (q;q^2)_n} q^{\binom{n+1}{2}} \label{asymbbaileyweak41}\\
&\qquad\qquad\qquad
=\frac{1}{1-z} \Big(\sum_{n=0}^{\infty} -  \sum_{n=-\infty}^{-1} \Big) (-1)^{n+1} q^{n^2} (1-q^{2n+1}) z^{n+1}.\nonumber
\end{align}
\end{theorem}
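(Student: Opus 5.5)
The plan is to substitute the asymmetric bilateral Bailey pair \eqref{asymmetricbb2} into the three weak forms \eqref{nonsymbbailey1}, \eqref{nonsymbbailey2} and \eqref{nonsymbbailey4} of Lemma \ref{nonsymbbailey}, one for each identity. Throughout, $\alpha_n=z^{-n}q^{\binom n2}(1-q^{2n+1})/(1-z)$ and $\beta_n=(-z,-1/z)_n/(q)_{2n}$. For the left-hand sides only elementary rewriting is needed. In \eqref{asymbbaileyweak21} we use $(-q)_n/(q)_{2n}=1/((q)_n(q;q^2)_n)$, since $(q)_{2n}=(q;q^2)_n(q^2;q^2)_n$ and $(q^2;q^2)_n=(q)_n(-q)_n$. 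In \eqref{asymbbaileyweak41} we use $(q)_n/(q)_{2n}=1/((-q)_n(q;q^2)_n)$.

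For \eqref{asymbbaileyweak20}, the right side of \eqref{nonsymbbailey1} is $\frac{1}{(1-z)(q)_\infty}\sum_n z^{-n}q^{n(n+1)+\binom n2}(1-q^{2n+1})$. We split it as $\sum_n z^{-n}q^{(3n^2+n)/2}-\sum_n z^{-n}q^{(3n^2+5n+2)/2}$. In the second sum we substitute $n\mapsto -n-1$, which turns it into $z\sum_n z^{n}q^{(3n^2+n)/2}$. Applying Jacobi's triple product \eqref{JTP} with base $q^3$ then gives
\[
\sum_n z^{-n}q^{3\binom n2+2n}=(-q^2/z,-zq,q^3;q^3)_\infty,\qquad \sum_n z^{n}q^{3\binom n2+2n}=(-zq^2,-q/z,q^3;q^3)_\infty ,
\]
which is exactly the stated right side.

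For \eqref{asymbbaileyweak21}, the right side of \eqref{nonsymbbailey2} involves $\sum_n z^{-n}q^{n^2}(1-q^{2n+1})$. Again we split into two sums and reindex the second with $n\mapsto -n-1$. The result is $\sum_n z^{-n}q^{n^2}+z\sum_n z^{n}q^{n^2}$, and each piece is a theta function in base $q^2$: the first is $(-q/z,-zq,q^2;q^2)_\infty$ and the second equals the same product with $z\leftrightarrow 1/z$, which is symmetric. So the total is $(1+z)(-q/z,-zq,q^2;q^2)_\infty$. Dividing by $1-z$ should produce the stated product, but I expect a prefactor discrepancy here: a literal factor $(1+z)/(1-z)$ appears that is not visible in the statement. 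This is the step I would check most carefully. I would first test it at small order in $q$ (for instance at $n=0$, $\beta_0=1$), and if needed absorb the factor by rescaling conventions or by correcting the displayed constant.

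For \eqref{asymbbaileyweak41}, \eqref{nonsymbbailey4} gives the sum $\sum_{n\ge0}(-1)^nq^{\binom{n+1}2}(\alpha_n+\alpha_{-n-1})$. The two exponents combine as $\binom{n+1}2+\binom n2=n^2$ and $\binom{n+1}2+\binom{-n-1}2=(n+1)^2$. We also have $1-q^{-2n-1}=-q^{-2n-1}(1-q^{2n+1})$. Writing $\alpha_{-n-1}$ term by term and reindexing it as the negative-index part of a single bilateral sum with a sign change should give $\frac{1}{1-z}\big(\sum_{n\ge0}-\sum_{n<0}\big)(-1)^{n+1}q^{n^2}(1-q^{2n+1})z^{n+1}$ after the substitution $z\mapsto$ appropriate orientation. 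The main obstacle throughout is this bookkeeping of signs, powers of $z$ and the $n\mapsto -n-1$ reflections; the bookkeeping in the first identity serves as the template for the others.
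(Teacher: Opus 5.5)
Your route is the paper's own: insert the pair \eqref{asymmetricbb2} into \eqref{nonsymbbailey1}, \eqref{nonsymbbailey2} and \eqref{nonsymbbailey4}. Your left-hand-side rewritings and your treatment of \eqref{asymbbaileyweak20} are correct.

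\textbf{The error in \eqref{asymbbaileyweak21}.} There is a sign slip here, and it leads you to a false conclusion about the statement. The factor in $\alpha_n$ is $1-q^{2n+1}$, so the bilateral sum is $\sum_n z^{-n}q^{n^2}-\sum_n z^{-n}q^{(n+1)^2}$. After $n\mapsto -n-1$, the second piece is $z\sum_n z^{n}q^{n^2}$, and it enters with a \emph{minus} sign, exactly as in your first identity. By the symmetry $n\mapsto -n$,
\[
\sum_{n=-\infty}^{\infty} z^{-n}q^{n^2}(1-q^{2n+1})=(1-z)\,(-q/z,-zq,q^2;q^2)_\infty .
\]
The factor $1-z$ cancels the denominator of $\alpha_n$, so \eqref{asymbbaileyweak21} holds exactly as displayed. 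There is no $(1+z)/(1-z)$ discrepancy. Your suggested remedy of ``correcting the displayed constant'' would turn a true identity into a false one. The $q^0$ test you propose confirms this: both sides of \eqref{asymbbaileyweak21} have constant term $1$, whereas your version gives $(1+z)/(1-z)$.

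\textbf{The bookkeeping in \eqref{asymbbaileyweak41}.} No change of $z$ is needed, but you have the roles of the two pieces reversed. Your exponent computations give
\[
(-1)^nq^{\binom{n+1}{2}}\alpha_{-n-1}=-\frac{(-1)^n z^{n+1}q^{n^2}(1-q^{2n+1})}{1-z},\qquad (-1)^nq^{\binom{n+1}{2}}\alpha_{n}=\frac{(-1)^n z^{-n}q^{n^2}(1-q^{2n+1})}{1-z}.
\]
So the $\alpha_{-n-1}$ terms are already the $n\ge 0$ part of the displayed sum. The $\alpha_n$ terms become its $-\sum_{n\le -1}$ part under $n\mapsto -n-1$. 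This works because the summand $(-1)^{m+1}q^{m^2}(1-q^{2m+1})z^{m+1}$ at $m=-n-1$ equals $-(-1)^nq^{n^2}(1-q^{2n+1})z^{-n}$.

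With these two corrections the argument goes through.
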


Letting $z=e^{i\theta}$ and taking some special values for $\theta$ in Theorem \ref{asymbbaileyweak2}, it leads to a series of identities. By taking $\theta=\frac{\pi}{2}, \frac{\pi}{3}, \frac{2\pi}{3}$ in \eqref{asymbbaileyweak20}, we obtain the following identities on Ramanujan's theta functions, respectively.

\begin{corollary}
We have
\begin{align}
&\sum_{n=0}^\infty \frac{(-1;q^2)_n}{(q)_{2n}} q^{n(n+1)}=\frac{1}{(q)_\infty} \big(f(-q^5, -q^7) -q f(-q, -q^{11}) \big), \label{asymbbaileyweak20-coro1}\\
&1+3\sum_{n=1}^\infty \frac{(q^3;q^3)_{n-1}}{(q)_{n-1} (q)_{2n}} q^{n(n+1)} \label{asymbbaileyweak20-coro2}\\
&\ =\frac{1}{(q)_\infty} \big(f(-q^{12}, -q^{15}) -q f(-q^6, -q^{21}) +2q^2 f(-q^{3}, -q^{24}) \big),\nonumber\\
&1+\sum_{n=1}^\infty \frac{(-q^3;q^3)_{n-1}}{(-q)_{n-1}(q)_{2n}} q^{n(n+1)} =\frac{1}{(q)_\infty} \big( f(q^{12}, q^{15}) -qf(q^{6}, q^{21}) \big). \label{asymbbaileyweak20-coro3}
\end{align}
\end{corollary}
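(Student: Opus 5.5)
Each of the three identities comes from specializing \eqref{asymbbaileyweak20} at $z=e^{i\theta}$ for $\theta=\frac{\pi}{2},\frac{\pi}{3},\frac{2\pi}{3}$, then simplifying the left side by elementary cyclotomic factorizations and the right side into Ramanujan theta functions.

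For the left side, write $(-z,-1/z)_n=\prod_{j=0}^{n-1}(1+2q^j\cos\theta+q^{2j})$. For $\theta=\pi/2$ each factor is $1+q^{2j}$, so the product is $(-1;q^2)_n$. For $\theta=\pi/3$ each factor is $1+q^j+q^{2j}=(1-q^{3j})/(1-q^j)$. The $j=0$ factor equals $3$, so for $n\ge1$ the product is $3(q^3;q^3)_{n-1}/(q)_{n-1}$, and the $n=0$ term is $1$. For $\theta=2\pi/3$ each factor is $1-q^j+q^{2j}=(1+q^{3j})/(1+q^j)$. The $j=0$ factor is $1$, so the product is $(-q^3;q^3)_{n-1}/(-q)_{n-1}$. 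This accounts for the three left-hand sides.

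For the right side, the key is to rewrite it before specializing. I expand the bracket in \eqref{asymbbaileyweak20} by \eqref{JTP} rather than evaluating products at roots of unity. With $q\mapsto q^3$,
\[
(-q^2/z,-zq,q^3;q^3)_\infty=\sum_m q^{3\binom{m}{2}+m}z^m,\qquad z(-q/z,-zq^2,q^3;q^3)_\infty=\sum_m q^{3\binom{m}{2}+2m}z^{m+1}.
\]
Reindexing the second sum by $m\mapsto m-1$ turns it into $\sum_m q^{3\binom{m}{2}-1}z^m$ after simplification. The bracket is therefore $\sum_m c_m z^m$ with $c_m=q^{(3m^2-m)/2}-q^{(3m^2-3m-2)/2}$. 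I then divide by $1-z$ formally, which is legitimate because the quotient is a finite expression in $z$ for each term. Equivalently, I would directly compute $\frac{1}{1-z}(A(z)-zB(z))$ and show that the quotient equals $\sum_m d_m z^m$ with $d_m$ a signed partial telescoping of the $c_m$.

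A cleaner route is to use instead the bilateral form from \eqref{nonsymbbailey1} applied to the pair \eqref{asymmetricbb2}. This gives the right side as
\[
\frac{1}{(q)_\infty(1-z)}\sum_n q^{n(n+1)+\binom n2}(1-q^{2n+1})z^{-n},
\]
where each summand involves only $z^{-n}$. After substituting $z=e^{i\theta}$, I would split the remaining division by $1-e^{i\theta}$ by pairing terms $n$ and $-n-1$. These pairs combine as $q^{(3n^2+n)/2}\bigl(z^{-n}-z^{n+1}\bigr)$-type expressions, and dividing by $1-z$ yields $\sin((n+\tfrac12)\theta)/\sin(\tfrac\theta2)$-type factors, i.e.\ $W_n(\cos\theta)$. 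These take periodic values: period $4$ for $\theta=\pi/2$ with values $1,-1,-1,1$; period $6$ for $\pi/3$; and period $3$ for $2\pi/3$ with values $1,-1,0$.

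Finally, I split $n$ into residue classes modulo the period and recognize each class as a theta series. Each $\sum_n q^{An^2+Bn}$ over a bilateral residue class is an $f(\pm q^a,\pm q^b)$ with exponents $12,15$ or $5,7$ and so on, and sign patterns become the minus arguments. The main obstacle is this bookkeeping of residue classes and signs for each $\theta$, especially for $\pi/3$, where the period-$6$ values $1,2,1,-1,-2,-1$ must collapse into three theta functions with coefficients $1,-1,2$. If needed, \eqref{f(a,b)} can merge classes.
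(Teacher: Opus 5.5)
Your route is essentially the paper's. You insert \eqref{asymmetricbb2} into \eqref{nonsymbbailey1}, set $z=e^{i\theta}$, use $n\mapsto -n-1$ to absorb the factor $1/(1-z)$, then sort by residue classes and recognize theta functions. Your reductions of the three left-hand sides are exactly right.

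The difference is one of organization. You pair $n$ with $-n-1$ before specializing, which turns the right side into $\frac{1}{(q)_\infty}\sum_{n\in\mathbb{Z}}q^{(3n^2+n)/2}\sin((n+\tfrac12)\theta)/\sin(\tfrac{\theta}{2})$. That factor is antiperiodic under $n\mapsto n+2$ at $\theta=\pi/2$ and under $n\mapsto n+3$ at $\theta=\pi/3$. So you can read $f(-q^5,-q^7)$ and $-qf(-q,-q^{11})$ straight off the classes mod $2$ with alternating sign. Likewise $f(-q^{12},-q^{15})$, $2q^2f(-q^3,-q^{24})$ and $-qf(-q^6,-q^{21})$ come straight off the classes mod $3$. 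At $\theta=2\pi/3$ there is no sign alternation, which is why the arguments there are positive. The paper instead pairs residue classes after specializing, passes to products with base $q^{48}$ or $q^{108}$, and recombines them with \eqref{f(a,b)}. Your version is shorter and never needs \eqref{f(a,b)}.

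You do need to fix the bookkeeping, because as written two of your sign lists give false identities. With your own normalization (exponent $(3n^2+n)/2$, factor $\sin((n+\tfrac12)\theta)/\sin(\tfrac{\theta}{2})$), the correct values are:
\begin{itemize}
\item at $\theta=\pi/2$, for $n\equiv 0,1,2,3\pmod 4$: $1,1,-1,-1$;
\item at $\theta=2\pi/3$, for $n\equiv 0,1,2\pmod 3$: $1,0,-1$.
\end{itemize}
The lists $1,-1,-1,1$ and $1,-1,0$ you wrote are the values at $-n$, which is the indexing of your abandoned first route. Combined with the exponent $(3n^2+n)/2$, they would give $f(-q^5,-q^7)+qf(-q,-q^{11})$ and $f(q^{12},q^{15})-q^2f(q^3,q^{24})$. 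Both already fail at the coefficient of $q$, which is $0$ on the left in both cases. Your period-$6$ list $1,2,1,-1,-2,-1$ is correctly indexed.

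The abandoned first route also has two errors. The reindexed second sum is $\sum_m q^{(3m^2-5m+2)/2}z^m$, not $\sum_m q^{3\binom{m}{2}-1}z^m$. And dividing by $1-z$ is not legitimate monomial by monomial. It becomes legitimate only after grouping $m$ with $1-m$ (note $c_{1-m}=-c_m$), which is precisely your second route.
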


\begin{proof}
In order to get \eqref{asymbbaileyweak20-coro1} more convenience,  we simplify the identity by inserting the asymmetric bilateral Bailey pair \eqref{asymmetricbb2} into formula \eqref{nonsymbbailey1}, then setting $z=e^{i\theta}, \theta=\frac{\pi}{2}$, we can get the left hand immediately. For the right hand, it becomes
\begin{align*}
\frac{1}{(1-i)(q)_\infty} \sum_{n=-\infty}^{\infty} q^{n(n+1)+\binom{n}{2}} (1-q^{2n+1}) i^{-n},
\end{align*}
considering the residues of $n$ modulo 4, it can be written as
\begin{align*}
&\frac{1}{(1-i)(q)_\infty}
\big( \sum_{n=-\infty}^{\infty}
q^{4n(4n+1)+\binom{4n}{2}} (1-q^{8n+1})
-i \sum_{n=-\infty}^{\infty}
q^{(4n+1)(4n+2)+\binom{4n+1}{2}} \\
&\qquad\qquad\qquad
(1-q^{8n+3}) -\sum_{n=-\infty}^{\infty}
q^{(4n+2)(4n+3)+\binom{4n+2}{2}} (1-q^{8n+5}) \\
&\qquad\qquad\qquad
+i \sum_{n=-\infty}^{\infty}
q^{(4n+3)(4n+4)+\binom{4n+3}{2}} (1-q^{8n+7}) \big),
\end{align*}
we can see that
\begin{align*}
&\sum_{n=-\infty}^{\infty}
q^{4n(4n+1)+\binom{4n}{2}} (1-q^{8n+1})
+i \sum_{n=-\infty}^{\infty}
q^{(4n+3)(4n+4)+\binom{4n+3}{2}} (1-q^{8n+7})\\ &\quad
=(1-i)\sum_{n=-\infty}^{\infty}
q^{4n(4n+1)+\binom{4n}{2}} (1-q^{8n+1}),
\end{align*}
by taking $n \mapsto -n-1$ for the second term. Similarly
\begin{align*}
&-i \sum_{n=-\infty}^{\infty}
q^{(4n+1)(4n+2)+\binom{4n+1}{2}}(1-q^{8n+3}) -\sum_{n=-\infty}^{\infty}
q^{(4n+2)(4n+3)+\binom{4n+2}{2}} (1-q^{8n+5})\\ &\quad
=(1-i)\sum_{n=-\infty}^{\infty}
q^{(4n+1)(4n+2)+\binom{4n+1}{2}}(1-q^{8n+3}),
\end{align*}
hence combing with the Jacobi's triple product identity \eqref{JTP}, it reduces to
\begin{align*}
&\frac{1}{(q)_\infty} \big((-q^{22}, -q^{26}, q^{48}; q^{48})_\infty - q (-q^{14}, -q^{34}, q^{48}; q^{48})_\infty\\
&\qquad\qquad + q^2 (-q^{10}, -q^{38}, q^{48}; q^{48})_\infty - q^5 (-q^{2}, -q^{46}, q^{48}; q^{48})_\infty \big),
\end{align*}
by noting \eqref{f(a,b)}, we can see that
\begin{align*}
&(-q^{22}, -q^{26}, q^{48}; q^{48})_\infty- q^5 (-q^{2}, -q^{46}, q^{48}; q^{48})_\infty\\
&\ =f(q^{22}, q^{26}) -q^5 f(q^{2}, q^{46})\\   &\ =f(-q^5, -q^7),
\end{align*}
and
\begin{align*}
&(-q^{14}, -q^{34}, q^{48}; q^{48})_\infty- q (-q^{10}, -q^{38}, q^{48}; q^{48})_\infty =f(-q, -q^{11}),
\end{align*}
identity \eqref{asymbbaileyweak20-coro1} is proved.

Now let $\theta=\frac{\pi}{3}$, take parity of n modulo 6 for the right hand, it can reduce to
\begin{align*}
&\frac{1}{(q)_\infty} \big((-q^{51}, -q^{57}, q^{108}; q^{108})_\infty -q (-q^{39}, -q^{69}, q^{108}; q^{108})_\infty\\
&\qquad +2 q^2 (-q^{33}, -q^{75}, q^{108}; q^{108})_\infty -2 q^5 (-q^{21}, -q^{87}, q^{108}; q^{108})_\infty\\
&\qquad +q^7 (-q^{15}, -q^{93}, q^{108}; q^{108})_\infty -q^{12} (-q^{3}, -q^{105}, q^{108}; q^{108})_\infty \big),
\end{align*}
simplify it by formula \eqref{f(a,b)}, we can get \eqref{asymbbaileyweak20-coro2}.

For \eqref{asymbbaileyweak20-coro3}, we can get it by letting  $\theta=\frac{2\pi}{3}$, and taking parity of n modulo 3 for the right hand.
\end{proof}

Then by taking $\theta=0, \frac{\pi}{2}, \frac{\pi}{3}, \frac{2\pi}{3}$ in \eqref{asymbbaileyweak21}, respectively, we get  the following identities of Rogers--Ramanujan type.

\begin{corollary}
We have
\begin{align*}
&\sum_{n=0}^\infty \frac{(-1)^2_n \ q^{\binom{n+1}{2}}}{(q)_n(q;q^2)_n}
=(-q)^2_\infty (-q;q^2)^2_\infty,\\
&\sum_{n=0}^\infty \frac{(-1;q^2)_n \ q^{\binom{n+1}{2}}}{(q)_n(q;q^2)_n}
=(-q)^2_\infty (-q^2;q^4)_\infty,\\
&1+3\sum_{n=1}^\infty \frac{(q^3;q^3)_{n-1} \ q^{\binom{n+1}{2}}}{(q)_{n-1} (q)_n(q;q^2)_n}
=(-q)^3_\infty(q^3;q^6)_\infty,\\
&1+\sum_{n=1}^\infty \frac{(-q^3;q^3)_{n-1} \ q^{\binom{n+1}{2}}}{(-q)_{n-1}(q)_n(q;q^2)_n}
=\frac{1}{(q;q^2)_\infty (q^2,q^3,q^9,q^{10};q^{12})_\infty}.
\end{align*}
\end{corollary}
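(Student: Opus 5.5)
Each of the four identities will come from \eqref{asymbbaileyweak21} by putting $z=e^{i\theta}$ with $\theta=0,\frac{\pi}{2},\frac{\pi}{3},\frac{2\pi}{3}$. For each angle I simplify the left-hand side with \eqref{triangle fomular} and the right-hand side with elementary product manipulations. No new summation formula should be needed, since the right-hand side of \eqref{asymbbaileyweak21} is already a product. One small point to record first: $(q)_n(q;q^2)_n=(q)_{2n}/(q^2;q^2)_n\cdot(q)_n$. This does not need to be used; the denominator $(q)_n(q;q^2)_n$ is simply carried through unchanged.

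\textbf{Left-hand sides.} With $z=e^{i\theta}$ we have $(-z,-1/z)_n=(1+z)(1+1/z)\prod_{j=1}^{n-1}(1+2q^j\cos\theta+q^{2j})$ for $n\ge1$, and $(1+z)(1+1/z)=2+2\cos\theta$.
For $\theta=0$ this gives $(-1)_n^2$ directly.
For $\theta=\pi/2$ each factor is $1+q^{2j}$, and together with $(1+i)(1-i)=2$ the product is $(-1;q^2)_n$.
For $\theta=\pi/3$ the prefactor is $3$ and each factor is $1+q^j+q^{2j}=(1-q^{3j})/(1-q^j)$, so the product is $3(q^3;q^3)_{n-1}/(q)_{n-1}$. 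The $n=0$ term equals $1$, which gives the ``$1+3\sum_{n\ge1}$'' form.
For $\theta=2\pi/3$ the prefactor is $1$ and each factor is $1-q^j+q^{2j}=(1+q^{3j})/(1+q^j)$, so the product is $(-q^3;q^3)_{n-1}/(-q)_{n-1}$.

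\textbf{Right-hand sides.} Start from $\frac{(-q)_\infty}{(q)_\infty}(-qz,-q/z,q^2;q^2)_\infty$ and use $\frac{(q^2;q^2)_\infty}{(q)_\infty}=(-q)_\infty$, so the right-hand side is $(-q)_\infty^2(-qz,-q/z;q^2)_\infty$.
For $\theta=0$ this is $(-q)^2_\infty(-q;q^2)^2_\infty$.
For $\theta=\pi/2$, $(1+iq^{2k+1})(1-iq^{2k+1})=1+q^{4k+2}$, which gives $(-q^2;q^4)_\infty$.
For $\theta=\pi/3$, $1+q^{2k+1}+q^{4k+2}=(1-q^{6k+3})/(1-q^{2k+1})$, so $(-qz,-q/z;q^2)_\infty=(q^3;q^6)_\infty/(q;q^2)_\infty$. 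Since $(-q)_\infty=1/(q;q^2)_\infty$, this yields $(-q)^3_\infty(q^3;q^6)_\infty$.
For $\theta=2\pi/3$, $1-q^{2k+1}+q^{4k+2}=(1+q^{6k+3})/(1+q^{2k+1})$, so the right-hand side becomes $(-q)_\infty^2(-q^3;q^6)_\infty/(-q;q^2)_\infty$.

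\textbf{Main step: the last product.} The only nontrivial work is matching this last expression to $1/\big((q;q^2)_\infty(q^2,q^3,q^9,q^{10};q^{12})_\infty\big)$. I would convert everything to $(q^k;q^{12})_\infty$ factors.
First use $(-q)_\infty=1/(q;q^2)_\infty$, which absorbs one factor $(q;q^2)_\infty$.
Next write
\[
\frac{(-q)_\infty}{(-q;q^2)_\infty}=(-q^2;q^2)_\infty=\frac{(q^4;q^4)_\infty}{(q^2;q^2)_\infty}=\frac{1}{(q^2;q^4)_\infty}.
\]
Then write
\[
(-q^3;q^6)_\infty=\frac{(q^6;q^{12})_\infty}{(q^3;q^6)_\infty}=\frac{(q^6;q^{12})_\infty}{(q^3,q^9;q^{12})_\infty}.
\]
Finally expand $(q^2;q^4)_\infty=(q^2,q^6,q^{10};q^{12})_\infty$. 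The factor $(q^6;q^{12})_\infty$ then cancels, leaving exactly $1/\big((q;q^2)_\infty(q^2,q^3,q^9,q^{10};q^{12})_\infty\big)$.
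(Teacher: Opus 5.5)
Your proposal is correct and is the argument the paper has in mind: specialize $z=e^{i\theta}$ with $\theta=0,\frac{\pi}{2},\frac{\pi}{3},\frac{2\pi}{3}$ in \eqref{asymbbaileyweak21}. You then simplify the left side with \eqref{triangle fomular} and the right side with $(q^2;q^2)_\infty/(q)_\infty=(-q)_\infty$ and elementary product manipulations. The paper gives no further detail, and your computations all check out, including the modulus-$12$ bookkeeping for $\theta=2\pi/3$. Taking $z=1$ is harmless because \eqref{asymbbaileyweak21}, unlike the pair \eqref{asymmetricbb2}, has no $1/(1-z)$ factor.
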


 Setting  $\theta=\frac{\pi}{2},\frac{\pi}{3},\frac{2\pi}{3}$  in \eqref{asymbbaileyweak41}, it leads to the   following identities on  false theta functions, respectively.
\begin{corollary}
We have
\begin{subequations}
\begin{align*}
&\sum_{n=0}^\infty \frac{(-1)^n(-1;q^2)_n}{(-q)_n(q;q^2)_n} q^{\binom{n+1}{2}}
=\sum_{n=0}^\infty (-1)^n q^{4n^2} (1-2q^{4n+1}+q^{8n+4}),\\
&1+3\sum_{n=1}^\infty\frac{(-1)^n(q^3;q^3)_{n-1}} {(q)_{n-1}(-q)_n(q;q^2)_n} q^{\binom{n+1}{2}}\\
&\qquad\qquad\qquad
=\sum_{n=0}^\infty q^{9n^2} (1-3q^{6n+1}+3q^{12n+4}-q^{18n+9}),\\
&1+\sum_{n=1}^\infty\frac{(-1)^n(-q^3;q^3)_{n-1}} {(-q)_{n-1}(-q)_n(q;q^2)_n} q^{\binom{n+1}{2}}\\
&\qquad\qquad\qquad
=\sum_{n=0}^\infty (-1)^n q^{9n^2} (1-q^{6n+1}-q^{12n+4}+q^{18n+9}).
\end{align*}
\end{subequations}
\end{corollary}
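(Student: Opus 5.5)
Each identity is the specialization $z=e^{i\theta}$ of \eqref{asymbbaileyweak41}, with $\theta=\frac{\pi}{2},\frac{\pi}{3},\frac{2\pi}{3}$ respectively. The proof follows the pattern of the proof of \eqref{nonsymbbailey313}, with the left and right hand sides treated separately.

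\emph{Left hand side.} By \eqref{triangle fomular}, $(-z,-1/z)_n=\prod_{j=0}^{n-1}(1+2q^j\cos\theta+q^{2j})$. The $j=0$ factor equals $2+2\cos\theta$, which is $2,3,1$ for the three values of $\theta$. For $j\ge 1$ the factor becomes:
\begin{itemize}
\item $1+q^{2j}$ when $\theta=\frac{\pi}{2}$;
\item $1+q^j+q^{2j}=\frac{1-q^{3j}}{1-q^j}$ when $\theta=\frac{\pi}{3}$;
\item $1-q^j+q^{2j}=\frac{1+q^{3j}}{1+q^j}$ when $\theta=\frac{2\pi}{3}$.
\end{itemize}
Hence for $n\ge1$ the product is $(-1;q^2)_n$, $3\frac{(q^3;q^3)_{n-1}}{(q)_{n-1}}$ and $\frac{(-q^3;q^3)_{n-1}}{(-q)_{n-1}}$ respectively. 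This reproduces the three left hand sides, with the $n=0$ term split off as $1$ in the last two.

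\emph{Right hand side.} I write it as $\frac{1}{1-z}\sum_{n\in\mathbb{Z}}\mathrm{sg}(n)(-1)^{n+1}q^{n^2}(1-q^{2n+1})z^{n+1}$, where $\mathrm{sg}(n)=1$ for $n\ge0$ and $-1$ for $n<0$. The substitution $n\mapsto -n-1$ sends $q^{n^2}(1-q^{2n+1})$ to $-q^{n^2}(1-q^{2n+1})\cdot q^{2n+1}\cdot q^{-2n-1}$. More precisely, $q^{(n+1)^2}(1-q^{-2n-1})=-q^{n^2}(1-q^{2n+1})$. It also flips $\mathrm{sg}$ and sends $z^{n+1}\mapsto z^{-n}$. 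Therefore the negative part folds onto $n\ge0$, and the right hand side becomes
\[
\sum_{n\ge0}(-1)^{n+1}q^{n^2}(1-q^{2n+1})\,\frac{z^{n+1}-z^{-n}}{1-z}.
\]
This is $-\sum_{n\ge0}(-1)^{n+1}q^{n^2}(1-q^{2n+1})W_n(\cos\theta)$, i.e.\ $\sum_{n\ge 0}(-1)^n q^{n^2}(1-q^{2n+1})\,\frac{\sin(n+\frac12)\theta}{\sin\frac{\theta}{2}}$.

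The ratio $\frac{\sin(n+\frac12)\theta}{\sin\frac{\theta}{2}}$ is periodic in $n$:
\begin{itemize}
\item for $\theta=\frac{\pi}{2}$ (period 4) the values are $1,1,-1,-1$;
\item for $\theta=\frac{\pi}{3}$ (period 6) they are $1,2,1,-1,-2,-1$;
\item for $\theta=\frac{2\pi}{3}$ (period 3) they are $1,-1,0$.
\end{itemize}

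I then split $n$ into residue classes and expand $q^{n^2}(1-q^{2n+1})=q^{n^2}-q^{(n+1)^2}$. This telescopes: the coefficient of $q^{m^2}$ becomes $(-1)^m\big(c_m+c_{m-1}\big)$, where $c_m=\frac{\sin(m+\frac12)\theta}{\sin\frac{\theta}{2}}$ (with $c_{-1}=0$).

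For $\theta=\frac{\pi}{2}$ the values of $c_m+c_{m-1}$ are $2,0,-2,0$, with $m=0$ giving $1$. This yields terms in $q^{(2k)^2}$ with signs $(-1)^k$ and coefficient $2$, except the constant term, which should match $\sum(-1)^nq^{4n^2}(1-2q^{4n+1}+q^{8n+4})$.

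Here I find a discrepancy: the exponents $4n^2+4n+1=(2n+1)^2$ are odd squares, so I should recheck the sign $(-1)^n$ in the reduction. The safer route is to compute the coefficient of $q^{m^2}$ directly for $m$ in each residue class mod $4$, $6$, $3$ respectively and match with the stated right hand sides. In the stated form, for instance for $\theta=\frac{\pi}{3}$, the terms $q^{9n^2}$, $q^{(3n+1)^2}$, $q^{(3n+2)^2}$, $q^{(3n+3)^2}$ carry coefficients $1,-3,3,-1$.

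This residue bookkeeping, including the $n=0$ boundary term and the signs, is the main obstacle. I would verify each case by comparing the first few coefficients.
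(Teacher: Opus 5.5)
Your route is the paper's. There the corollary is obtained simply by setting $z=e^{i\theta}$ in \eqref{asymbbaileyweak41} and sorting the right-hand side by residue classes, as in the proof of \eqref{nonsymbbailey313}. Several of your steps are correct:
\begin{itemize}
\item the left-hand side computation;
\item the folding of the right-hand side into $\sum_{n\ge0}(-1)^nq^{n^2}(1-q^{2n+1})c_n$, with $c_n=\sin((n+\frac12)\theta)/\sin\frac{\theta}{2}$;
\item the telescoped coefficient $(-1)^m(c_m+c_{m-1})$ of $q^{m^2}$.
\end{itemize}
The gap is that the residue bookkeeping, which is the whole content of the argument, is unfinished, and it is wrong where you do carry it out. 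As written, none of the three identities is established.

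For $\theta=\frac{2\pi}{3}$ the period-$3$ values are $c_n=1,0,-1$, not $1,-1,0$; indeed $c_1=2\cos\theta+1=0$. With your values the coefficient of $q$ would vanish. But both sides of the third identity begin $1-q-q^4+\cdots$, so that case would fail. For $\theta=\frac{\pi}{2}$ the ``discrepancy'' is a misreading on your part. Your list $2,0,-2,0$ belongs to $m\equiv1,2,3,0\pmod 4$, so the nonzero terms sit at odd $m$. There $(-1)^m=-1$ turns them into $-2q^{(4j+1)^2}$ and $+2q^{(4j+3)^2}$, so there is no sign to recheck.

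Checking ``the first few coefficients'' is also not a proof, and it is unnecessary, because the stated right-hand sides telescope as well. We have $q^{4n^2+4n+1}=q^{(2n+1)^2}$, $q^{4n^2+8n+4}=q^{(2n+2)^2}$, $q^{9n^2+6n+1}=q^{(3n+1)^2}$, $q^{9n^2+12n+4}=q^{(3n+2)^2}$ and $q^{9n^2+18n+9}=q^{(3n+3)^2}$. The paired terms $q^{(2n)^2},q^{(2n+2)^2}$ (resp.\ $q^{(3n)^2},q^{(3n+3)^2}$) then collapse to the constant $1$.

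Now redo the left side with the correct $c_n$. For $\theta=\frac{\pi}{3}$, $c_m+c_{m-1}=3,3,0,-3,-3,0$ for $m\equiv1,2,3,4,5,0\pmod 6$ with $m\ge1$. After the sign $(-1)^m$ this gives coefficients $-3,+3,0$ for $m\equiv1,2,0\pmod 3$. Doing this in all three cases, the specialized right side of \eqref{asymbbaileyweak41} and the stated right-hand side both equal, for $\theta=\frac{\pi}{2},\frac{\pi}{3},\frac{2\pi}{3}$ respectively,
\begin{align*}
&1-2\sum_{n\ge0}(-1)^nq^{(2n+1)^2},\qquad 1-3\sum_{n\ge0}q^{(3n+1)^2}+3\sum_{n\ge0}q^{(3n+2)^2},\\
&1-\sum_{n\ge0}(-1)^n\big(q^{(3n+1)^2}+q^{(3n+2)^2}\big).
\end{align*}
With these corrections your outline becomes a complete proof along the paper's lines.
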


\section{Identities on Appell--Lerch series}

In this section, based on the asymmetric bilateral Bailey lemma as given in Lemma \ref{bbl-ls} and combining with our  asymmetric bilateral Bailey pairs \eqref{asymmetricbb1} and \eqref{asymmetricbb2}, we derive identities on Appell--Lerch series.

Recall that the Appell--Lerch series introduced by Appell \cite{App846} and Lerch \cite{Ler92} has the following form
\begin{equation}\label{appler}
\sum_{n=-\infty}^\infty \frac{(-1)^{\ell n}q^{\ell n(n+1)/2}b^n}{1-aq^n}.
\end{equation}

Firstly, by inserting the asymmetric bilateral Bailey pair \eqref{asymmetricbb1} into the asymmetric bilateral Bailey lemma \ref{asymmetric-bilateral-Bailey-lemma}, we get the following identity on Appell--Lerch series.

\begin{corollary}
We have
\begin{align}
&\sum_{n=0}^{\infty} (x, y)_{n} (1-bq^n) \left(\frac{q}{xy}\right)^{n} \frac{(-1/z)_{n+1} (-zq)_n}{(q)_{2n+1}}\\
&\ =\frac{(q/x,q/y)_{\infty}}{(q,q/xy)_{\infty}} \sum_{n=-\infty}^{\infty} \frac{(x,y)_n q^{\binom{n+1}{2}}(qz/xy)^n}{(q/x,q/y)_{n}}  \bigg(\frac{1-bq^n}{1-q^{2n+1}} - \frac{q^n-b}{zq (1-q^{2n-1})}\bigg).\nonumber
\end{align}
\end{corollary}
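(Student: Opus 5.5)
The plan is to substitute the asymmetric bilateral Bailey pair \eqref{asymmetricbb1} directly into Lemma \ref{bbl-ls}. That pair has $a=1$ and $\lambda=1$:
\[
\alpha_n=z^nq^{\binom{n+1}{2}},\qquad \beta_n=\frac{(-1/z)_{n+1}(-zq)_n}{(q)_{2n+1}}.
\]
These are exactly the hypotheses of \eqref{asymmetric-bilateral-Bailey-lemma}. The left-hand side of the corollary is then literally the left-hand side of \eqref{asymmetric-bilateral-Bailey-lemma} with this $\beta_n$, so no work is needed there.

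On the right-hand side, the task is to simplify the bracket
\[
\frac{1-bq^n}{1-q^{2n+1}}\alpha_n-\frac{q^{n-1}(q^n-b)}{1-q^{2n-1}}\alpha_{n-1}.
\]
The key elementary computation is $\binom{n}{2}=\binom{n+1}{2}-n$. It gives
\[
\alpha_{n-1}=z^{n-1}q^{\binom{n}{2}}=\frac{z^nq^{\binom{n+1}{2}}}{zq^n},
\]
and hence
\[
q^{n-1}\alpha_{n-1}=\frac{z^nq^{\binom{n+1}{2}}}{zq}.
\]
So $z^nq^{\binom{n+1}{2}}$ factors out of both terms, and the bracket becomes
\[
z^nq^{\binom{n+1}{2}}\Big(\frac{1-bq^n}{1-q^{2n+1}}-\frac{q^n-b}{zq(1-q^{2n-1})}\Big).
\]
Next, combine $z^n$ with the factor $(q/xy)^n$ already present in \eqref{asymmetric-bilateral-Bailey-lemma} to obtain $(qz/xy)^n$. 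This yields exactly the stated right-hand side, with prefactor $(q/x,q/y)_\infty/(q,q/xy)_\infty$.

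The only point needing care is convergence and validity of the bilateral sum, which is inherited from Lemma \ref{bbl-ls}. The terms with negative $n$ contain $(x,y)_n/(q/x,q/y)_n$, which behaves like a quadratic power of $q$ growing in $-n$. This is balanced by $q^{\binom{n+1}{2}}$, so for generic $x,y,z,b$ the series converges as a formal or analytic identity. I therefore expect no genuine obstacle; the proof is a one-line specialization followed by the exponent bookkeeping above.
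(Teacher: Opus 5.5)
Your proposal is correct and follows the paper's own route: the paper likewise obtains this corollary by inserting the pair \eqref{asymmetricbb1} into Lemma~\ref{bbl-ls}, and your identity $q^{n-1}\alpha_{n-1}=z^nq^{\binom{n+1}{2}}/(zq)$ is exactly the bookkeeping that produces the stated bracket and the factor $(qz/xy)^n$. One small correction to your convergence aside: the factor $\frac{(x,y)_n}{(q/x,q/y)_n}(q/xy)^n$ is invariant under $n\mapsto -n$, so it is only geometric (not quadratic) in $|n|$, and the quadratic decay of $q^{\binom{n+1}{2}}$ therefore makes the bilateral sum converge even more comfortably than you suggest.
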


Consequently, letting $b=0,\ x,y\rightarrow\infty$, and then taking $z=1$ and $z=q$, respectively, it leads to the following identities
\begin{align*}
&\sum_{n=0}^{\infty}\frac{(-q)_n^2}{(q)_{2n+1}}q^{n^2}
=\frac{(-q, -q^2, q^3; q^3)_\infty}{(q)_\infty},\\
&\sum_{n=0}^{\infty}\frac{(-1)_n (-q)_{n+1}}{(q)_{2n+1}} q^{n^2}
=\frac{1}{(q)_\infty}((-q,-q^2,q^3;q^3)_\infty + q(-1,-q^3,q^3;q^3)_\infty),
\end{align*}
where the first one can be seen in    \cite[Entry 4.2.9]{AB09}.

Moreover, by taking $b=0, x\rightarrow\infty, y\mapsto -\sqrt{q}$, and $z=1$, then letting $q\mapsto q^2$, it  reduces to the following identity on Appell--Lerch series
\begin{align*}
\sum_{n=0}^{\infty}\frac{(-q^2;q^2)_n}{(1+q^{2n+1}) (q)_{2n+1}} q^{n^2}
=\frac{(-q;q^2)_\infty}{(q^2;q^2)_\infty} \sum_{n=-\infty}^{\infty} \frac{q^{n(2n+1)}}{1-q^{4n+2}}.
\end{align*}

Next, by fitting the asymmetric bilateral Bailey pair \eqref{asymmetricbb2} into formula \ref{asymmetric-bilateral-Bailey-lemma}, we obtain the following result.

\begin{corollary}\label{bi-Bailey-lattice2}
We have
\begin{align*}
&\sum_{n=0}^{\infty} (x, y)_{n} (1-bq^n) \left(\frac{q}{xy}\right)^{n} \frac{(-1/z, -z)_{n}} {(q)_{2n}}\\
&\quad =\frac{1}{1-z} \frac{(q/x,q/y)_{\infty}}{(q,q/xy)_{\infty}}
\sum_{n=-\infty}^{\infty} \frac{(x,y)_n q^{\binom{n}{2}}(q/xyz)^n}{(q/x,q/y)_{n}}
\big( (1-bq^n)-z(q^n-b) \big).
\end{align*}
\end{corollary}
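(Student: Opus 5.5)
The identity is the specialization of Lemma \ref{bbl-ls} to the asymmetric bilateral Bailey pair \eqref{asymmetricbb2}. That pair is established above with $a=1$ and $\lambda=1$ (for $|z|<1$), so it satisfies the hypotheses of Lemma \ref{bbl-ls}. The plan is to substitute $\beta_n=(-1/z,-z)_n/(q)_{2n}$ on the left of \eqref{asymmetric-bilateral-Bailey-lemma} and
\[
\alpha_n=\frac{z^{-n}q^{\binom n2}(1-q^{2n+1})}{1-z}
\]
on the right, and then simplify the bracketed combination of $\alpha_n$ and $\alpha_{n-1}$ term by term. The left-hand side is then literally the stated left-hand side, so all the work is on the right.

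\textbf{First term.} We have
\[
\frac{1-bq^n}{1-q^{2n+1}}\,\alpha_n=\frac{(1-bq^n)\,z^{-n}q^{\binom n2}}{1-z},
\]
since the factor $1-q^{2n+1}$ cancels. This factor never vanishes for integer $n$ when $|q|<1$, so no singular terms arise.

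\textbf{Second term.} Here
\[
\alpha_{n-1}=\frac{z^{-n+1}q^{\binom{n-1}2}(1-q^{2n-1})}{1-z},
\]
so the factor $1-q^{2n-1}$ cancels in the same way. Using $\binom{n-1}{2}+(n-1)=\binom n2$, we get
\[
\frac{q^{n-1}(q^n-b)}{1-q^{2n-1}}\,\alpha_{n-1}=\frac{z\,z^{-n}q^{\binom n2}(q^n-b)}{1-z}.
\]

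\textbf{Combining.} Subtracting, the bracket in \eqref{asymmetric-bilateral-Bailey-lemma} equals
\[
\frac{z^{-n}q^{\binom n2}}{1-z}\big((1-bq^n)-z(q^n-b)\big).
\]
Absorbing $z^{-n}$ into $(q/xy)^n$ gives $(q/xyz)^n$. Pulling the constant $1/(1-z)$ outside the bilateral sum yields exactly the stated right-hand side.

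The only step that needs any care is the exponent bookkeeping in the $\alpha_{n-1}$ term, i.e.\ checking $q^{n-1}q^{\binom{n-1}2}=q^{\binom n2}$ and $z^{-(n-1)}=z\cdot z^{-n}$. We should also note that the assumption $|z|<1$ (together with the usual conditions on $x,y$) guarantees convergence of both sides, as in Lemma \ref{bbl-ls}.
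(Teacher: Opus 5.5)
Your proposal is correct and is exactly the paper's argument: substitute the pair \eqref{asymmetricbb2} into Lemma \ref{bbl-ls}, cancel the factors $1-q^{2n+1}$ and $1-q^{2n-1}$, and use $q^{n-1}q^{\binom{n-1}{2}}=q^{\binom{n}{2}}$ and $z^{-(n-1)}=z\cdot z^{-n}$ to combine the two terms into the stated bracket. A minor aside: convergence rests on $|q/xy|<1$ and the $q^{\binom{n}{2}}$ factor in the bilateral sum, not on $|z|<1$; the finite identity defining the pair only needs $z\neq 0,1$.
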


Considering the special case when $b=1$ and $x,y\rightarrow\infty$ in  the above corollary, it implies the following result.

\begin{corollary}
We have
\begin{align*}
&\sum_{n=0}^{\infty} \frac{(-1/z, -z)_n}{(q)_{2n}} q^{n^2}\\
&\qquad
= \frac{1}{(1-z)(q)_\infty} \big((-q/z, -zq^2, q^3; q^3)_\infty - z(-q^2/z, -zq, q^3; q^3)_\infty\big).
\end{align*}
\end{corollary}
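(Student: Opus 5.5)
The plan is to specialize Corollary \ref{bi-Bailey-lattice2} to $b=1$ and let $x,y\rightarrow\infty$. On the left-hand side, $(x,y)_n(q/xy)^n \to q^{n(n-1)}$ termwise. The factor $(1-bq^n)$ becomes $(1-q^n)$, which vanishes at $n=0$. This is a problem, because the stated left side starts with $1$ at $n=0$ and carries the weight $q^{n^2}$ rather than $q^{n(n-1)}(1-q^n)$. So the left side requires care, and I would take it up after the right side.

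On the right-hand side, the prefactor tends to $1/(q)_\infty$. Each summand becomes
\[
q^{n(n-1)}q^{\binom n2}z^{-n}\big((1-q^n)-z(q^n-1)\big)=(1+z)(1-q^n)q^{n(n-1)+\binom n2}z^{-n},
\]
since for $b=1$ we have $(1-bq^n)-z(q^n-b)=(1-q^n)(1+z)$. Write $(1-q^n)q^{(3n^2-3n)/2}z^{-n}$ as the difference of two bilateral sums:
\[
\sum_n q^{(3n^2-3n)/2}z^{-n}-\sum_n q^{(3n^2-n)/2}z^{-n}.
\]
Applying \eqref{JTP} with base $q^3$ turns each sum into a product of the type $(-q^a/z,-zq^{3-a},q^3;q^3)_\infty$, possibly after a shift $n\mapsto n+1$ that introduces a power of $z$. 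Matching exponents should produce exactly the combination $(-q/z,-zq^2,q^3;q^3)_\infty-z(-q^2/z,-zq,q^3;q^3)_\infty$, up to the overall factor $(1+z)/(1-z)$.

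The main obstacle is reconciling this with the left side, and the factor $(1+z)$ points the way. I would apply $(1-q^n)/(q)_{2n}=1/\big((q^{n+1})_n(q)_{n-1}\big)$-type manipulations on the left. The more promising route is to telescope using $(-1/z,-z)_n$: write $(1-q^n)q^{n(n-1)}$ as $q^{n(n-1)}-q^{n^2}$ and reindex one piece. Concretely, I would check whether
\[
\sum_n \frac{(-1/z,-z)_n}{(q)_{2n}}q^{n(n-1)}(1-q^n)
\]
equals $(1+z)$ times the target sum. The factor $(1+z)$ should come out because the pair satisfies $\beta_n$ with $(-z)_n$, and $(1+z)$ is its first factor; this suggests comparing with the pair \eqref{asymmetricbb1} in the variant $z\mapsto 1/z$.

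If the direct identification fails, the fallback is to route the lattice-derived pair $\alpha'_n,\beta'_n=\frac{(q)_n}{(1)_n}\beta_n$ at $b=1$ through Lemma \ref{bbpinfinite}, treating the $b\to1$ limit carefully. There $(bq)_n/(b)_n\to(1-q^n)/(1-b)$ is singular, which explains the missing $n=0$ term. Taking $b\to1$ after multiplying by $1-b$ should leave a well-defined identity whose left side, after reindexing $n\mapsto n$ with the $q^{n^2}$ weight, matches the statement. Verifying the low-order coefficients in $q$ would confirm the normalization.
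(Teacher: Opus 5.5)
There is a genuine gap, and it starts with the limit itself. Since $(x)_n\sim(-x)^nq^{\binom n2}$ as $x\to\infty$ (for all $n\in\mathbb{Z}$), you get $(x,y)_n(q/xy)^n\to q^{2\binom n2+n}=q^{n^2}$, not $q^{n(n-1)}$. Write $A=(-q/z,-zq^2,q^3;q^3)_\infty=\sum_{n}q^{(3n^2-n)/2}z^{-n}$ and $B=(-q^2/z,-zq,q^3;q^3)_\infty=\sum_{n}q^{(3n^2+n)/2}z^{-n}$. With the limit corrected, the case $b=1$, $x,y\to\infty$ of Corollary \ref{bi-Bailey-lattice2} is
\[
\sum_{n=0}^{\infty}\frac{(-1/z,-z)_n}{(q)_{2n}}(1-q^n)q^{n^2}=\frac{1+z}{(1-z)(q)_\infty}\,(A-B).
\]
So the right side is $(1+z)(A-B)$, not $(1+z)(A-zB)$ as you anticipated. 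With your exponent $3\binom n2$ you would instead get $(-1/z,-zq^3,q^3;q^3)_\infty-A$. Either way, the claim that ``matching exponents'' produces the target combination is false.

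Your reconciliation of the left side cannot work either. The left side above has constant term $0$: the $n=0$ term is killed by $1-q^n$, and every other term is $O(q)$. But $(1+z)$ times the target sum has constant term $1+z$, so no telescoping through $(-z)_n$ can identify the two. The fallback of multiplying by $1-b$ and letting $b\to1$ just reproduces the same identity, because the $n=0$ contribution $(1-b)\beta_0$ still tends to $0$.

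The missing idea is an independent evaluation of the other half of the left side. Split $(1-q^n)q^{n^2}=q^{n^2}-q^{n^2+n}$. The sum $\sum_{n\ge0}\frac{(-1/z,-z)_n}{(q)_{2n}}q^{n^2+n}=\frac{B-zA}{(1-z)(q)_\infty}$ is exactly \eqref{asymbbaileyweak20}. Adding, $(1+z)(A-B)+(B-zA)=A-zB$ gives the statement; this is the paper's argument.

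Alternatively, you can avoid the $n=0$ issue altogether by taking $b=0$ instead of $b=1$. Then $(1-bq^n)-z(q^n-b)=1-zq^n$, and the left side is the target sum directly. The right side is $\frac{1}{(1-z)(q)_\infty}\sum_n q^{(3n^2-n)/2}z^{-n}(1-zq^n)=\frac{A-zB}{(1-z)(q)_\infty}$ by \eqref{JTP}.
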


\begin{proof}
By letting $b=1$ and $x,y\rightarrow\infty$ in Corollary \ref{bi-Bailey-lattice2}, combing the Jacobi's triple product identity \eqref{JTP}, it leads to the following identity
\begin{align*}
&\sum_{n=0}^{\infty} \frac{(-1/z, -z)_n}{(q)_{2n}} (1-q^n) \ q^{n^2}\\
&\quad = \frac{1+z}{(1-z)(q)_\infty} \big((-q/z, -zq^2, q^3; q^3)_\infty - (-q^2/z, -zq, q^3; q^3)_\infty\big).
\end{align*}
Expanding the left hand side of the above identity and substituting it by  \eqref{asymbbaileyweak20}, we complete the proof. \end{proof}

\section{The identities of Andrews--Gordon type}

In this section, we obtain some Andrews--Gordon type identities by inserting the asymmetric bilateral Bailey pair \eqref{asymmetricbb1} into the multisums stated in Corollary \ref{weakiterat}.

Firstly, by taking the asymmetric bilateral Bailey pair \eqref{asymmetricbb1} into \eqref{weakiterate1}, we  get the following identity.

\begin{theorem}\label{miterateAndrews}
For $k\in \mathbb{N^+}$, we have
\begin{align}
&\sum_{n_1\geq\cdots\geq n_k\geq 0} \frac{(-q/z,-zq)_{n_k} \ q^{\sum_{i=1}^{k}n_i(n_i+1)}}{(q)_{n_1-n_2} \cdots (q)_{n_{k-1}-n_k} (q)_{2n_k+1}} \label{iterateAndrews}\\
&\qquad\qquad\qquad\qquad
=\frac{(-q^{2k+1}/z, -zq^{2k+1}, q^{2k+1}; q^{2k+1})_\infty}{(q)_\infty}.  \nonumber
\end{align}
\end{theorem}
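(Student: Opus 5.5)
The identity follows by inserting the pair \eqref{asymmetricbb1} into the multisum identity \eqref{weakiterate1} of Corollary \ref{weakiterat}. Substituting, we replace $z$ by $1/z$ where convenient; equivalently we use the pair in the form coming from \eqref{Andrewidentity1-transform}.

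\emph{Left-hand side.} With $\beta_{n}=(-1/z)_{n+1}(-zq)_{n}/(q)_{2n+1}$, the left side of \eqref{weakiterate1} is
\[
\sum_{n_1\geq\cdots\geq n_k\geq 0}\frac{q^{\sum_{i=1}^{k}n_i(n_i+1)}(-1/z)_{n_k+1}(-zq)_{n_k}}{(q)_{n_1-n_2}\cdots (q)_{n_{k-1}-n_k}(q)_{2n_k+1}}.
\]
Since $(-1/z)_{n+1}=(1+1/z)(-q/z)_{n}$, this equals $(1+1/z)$ times the left side of \eqref{iterateAndrews}.

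\emph{Right-hand side.} The right side of \eqref{weakiterate1} is
\[
\frac{1}{(q)_\infty}\sum_{n=-\infty}^{\infty}q^{kn(n+1)+\binom{n+1}{2}}z^{n}.
\]
The exponent is $\frac{(2k+1)n(n+1)}{2}=(2k+1)\binom{n}{2}+(2k+1)n$. Jacobi's triple product \eqref{JTP}, with $q\mapsto q^{2k+1}$ and argument $-zq^{2k+1}$, gives
\[
\sum_{n}q^{(2k+1)\binom{n}{2}}(zq^{2k+1})^n=(-zq^{2k+1},-1/z,q^{2k+1};q^{2k+1})_\infty.
\]
Now factor $(-1/z;q^{2k+1})_\infty=(1+1/z)(-q^{2k+1}/z;q^{2k+1})_\infty$. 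Dividing both sides by $1+1/z$ (as an identity of Laurent series in $z$, or for $z\neq-1$ and extending by continuity) yields \eqref{iterateAndrews}.

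\emph{Points to check.} Corollary \ref{weakiterat} requires $\lambda=1$, which the pair \eqref{asymmetricbb1} satisfies. The interchange of limits in the chain for general $z$ holds by absolute convergence, since $q^{kn(n+1)}$ dominates $z^n$. The only delicate step is matching the theta series to the product, in particular the factor $1+1/z$, which is exactly cancelled. The case $k=1$ recovers \eqref{nonsymbbailey301}, which serves as a consistency check.
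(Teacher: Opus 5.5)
Your proof is correct and follows the paper's route exactly: you insert the pair \eqref{asymmetricbb1} into \eqref{weakiterate1}, extract the factor $1+1/z$ from $(-1/z)_{n_k+1}$ on the left, and evaluate $\sum_n q^{(2k+1)\binom{n+1}{2}}z^n$ by Jacobi's triple product with base $q^{2k+1}$, where the same factor $1+1/z$ appears in $(-1/z;q^{2k+1})_\infty$ and cancels. Your opening remark about replacing $z$ by $1/z$ is unnecessary, since you (correctly) use \eqref{asymmetricbb1} exactly as stated.
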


When $k=1$ in Theorem \ref{miterateAndrews}, it reduces to \eqref{nonsymbbailey301}.
If we set  $z=-q^{t}$ or equivalently, $z=-q^{-t}$, it leads to another form of Andrews--Gordon identity \eqref{eq:AGri}
\begin{align*}
\sum_{n_1\geq\cdots\geq n_k\geq 0} \frac{(q^{1-t})_{n_k} \ (q^{t})_{n_k +1} \ q^{\sum_{i=1}^{k} n_i(n_i+1)}} {(q)_{n_1-n_2} \cdots (q)_{n_{k-1}-n_k} (q)_{2n_k+1}}
=\frac{(q^t, q^{2k-t+1}, q^{2k+1}; q^{2k+1})_\infty}{(q)_\infty},
\end{align*}
combing the left hands of these two Andrews--Gordon identities, we can get
\begin{align*}
&\sum_{n_1\geq\cdots\geq n_k\geq 0} \frac{( q^{1-t})_{n_k} \ (q^{t})_{n_k +1} \ q^{\sum_{i=1}^{k} n_i(n_i+1)}} {(q)_{n_1-n_2} \cdots (q)_{n_{k-1}-n_k} (q)_{2n_k+1}}\\
&\qquad\qquad\qquad\qquad
=\sum_{n_1\geq\cdots\geq n_{k-1}\geq0} \frac{q^{n_1^2 +\cdots +n_{k-1}^2 +n_{t}+ \cdots+n_{k-1}}}{(q)_{n_1-n_2} \cdots(q)_{n_{k-2}-n_{k-1}} (q)_{n_{k-1}}}.
\end{align*}

When $t=0$, it is equivalent to identity \eqref{iterateAndrews3} as given in the following Corollary.

By substituting $z=e^{i\theta}$ with $\theta=0, \frac{\pi}{2}, \pi, \frac{\pi}{3} , \frac{2\pi}{3}$ into Theorem \ref{miterateAndrews}, and applying Jacobi's triple product identity \eqref{JTP}, we obtain the following identities.

\begin{corollary}
For $k\in \mathbb{N^+}$, we have
\begin{subequations}
\begin{align}
&\sum_{n_1\geq\cdots\geq n_k\geq 0} \frac{(-q)_{n_k}^2 \ q^{\sum_{i=1}^{k} n_i(n_i+1)}}
{(q)_{n_1-n_2}\cdots(q)_{n_{k-1}-n_k}(q)_{2n_k+1}}
=\frac{(-q^{2k+1}, -q^{2k+1}, q^{2k+1}; q^{2k+1})_\infty}{(q)_\infty},\label{iterateAndrews1}\\
&\sum_{n_1\geq\cdots\geq n_k\geq 0} \frac{(-q^2; q^2)_{n_k} \ q^{\sum_{i=1}^{k} n_i(n_i+1)}}
{(q)_{n_1-n_2} \cdots (q)_{n_{k-1}-n_k} (q)_{2n_k+1}}
=\frac{(q^{2k+1}, q^{6k+3}, q^{8k+4}; q^{8k+4})_\infty}{(q)_\infty},\label{iterateAndrews2}\\
&\sum_{n_1\geq\cdots\geq n_k\geq 0} \frac{(q)_{n_k} \ q^{\sum_{i=1}^{k} n_i(n_i+1)}}
{(q)_{n_1-n_2} \cdots (q)_{n_{k-1}-n_k} (q^{n_k+1})_{n_k+1}}
=\frac{(q^{2k+1};q^{2k+1})_\infty^3}{(q)_\infty},\label{iterateAndrews3}\\
&\sum_{n_1\geq\cdots\geq n_k\geq 0} \frac{(q^3;q^3)_{n_k} \ q^{\sum_{i=1}^{k}n_i(n_i+1)}}
{(q)_{n_1-n_2} \cdots (q)_{n_{k-1}-n_k} (q)_{2n_k+1}(q)_{n_k}}
=\frac{(q^{6k+3}; q^{6k+3})_\infty}{(q)_\infty}, \label{iterateAndrews4}\\
&\sum_{n_1\geq\cdots\geq n_k\geq 0} \frac{(-q^3;q^3)_{n_k} \ q^{\sum_{i=1}^{k} n_i(n_i+1)}} {(q)_{n_1-n_2} \cdots (q)_{n_{k-1}-n_k} (q)_{2n_k+1}(-q)_{n_k}} \label{iterateAndrews5}\\
&\qquad\qquad\qquad\qquad\qquad
=\frac{(q^{2k+1}; q^{2k+1})_\infty(q^{2k+1}, q^{10k+5}; q^{12k+6})_\infty}{(q)_\infty}.\nonumber
\end{align}
\end{subequations}
\end{corollary}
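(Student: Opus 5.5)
Each of the five identities is a specialization $z=e^{i\theta}$ of Theorem~\ref{miterateAndrews}, with $\theta=0,\frac{\pi}{2},\pi,\frac{\pi}{3},\frac{2\pi}{3}$ respectively. In each case I would simplify the two sides separately.

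\emph{Left hand side.} By \eqref{triangle fomular}, $(-q/z,-zq)_{n}=\prod_{j=1}^{n}(1+2q^j\cos\theta+q^{2j})$. The factor $1+2q^j\cos\theta+q^{2j}$ specializes as follows.
\begin{itemize}
\item $\theta=0$: it is $(1+q^j)^2$, so the product is $(-q)_n^2$.
\item $\theta=\frac{\pi}{2}$: it is $1+q^{2j}$, so the product is $(-q^2;q^2)_n$.
\item $\theta=\pi$: it is $(1-q^j)^2$, so the product is $(q)_n^2$. Cancelling one $(q)_{n}$ against $(q)_{2n+1}=(q)_n(q^{n+1})_{n+1}$ gives exactly the summand of \eqref{iterateAndrews3}.
\item $\theta=\frac{\pi}{3}$: it is $1+q^j+q^{2j}=\frac{1-q^{3j}}{1-q^j}$, so the product is $(q^3;q^3)_n/(q)_n$.
\item $\theta=\frac{2\pi}{3}$: it is $1-q^j+q^{2j}=\frac{1+q^{3j}}{1+q^j}$, so the product is $(-q^3;q^3)_n/(-q)_n$.
\end{itemize}
This is routine.

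\emph{Right hand side.} Write $p=q^{2k+1}$, so the product is $(-p/z,-zp,p;p)_\infty$. The cases $\theta=0$ and $\theta=\pi$ are immediate: they give $(-p,-p,p;p)_\infty$ and $(p;p)^3_\infty$. For the remaining cases I would combine conjugate factors, using $(1+p^je^{i\theta})(1+p^je^{-i\theta})=1+2p^j\cos\theta+p^{2j}$.
\begin{itemize}
\item $\theta=\frac{\pi}{2}$: this gives $(-p^2;p^2)_\infty(p;p)_\infty$. I then need to identify it with $(p,p^3,p^4;p^4)_\infty$. Indeed $(p;p)_\infty(-p^2;p^2)_\infty=(p;p)_\infty(p^4;p^4)_\infty/(p^2;p^2)_\infty=(p;p^2)_\infty(p^4;p^4)_\infty$, and $(p;p^2)_\infty=(p,p^3;p^4)_\infty$. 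With $p=q^{2k+1}$ this is $(q^{2k+1},q^{6k+3},q^{8k+4};q^{8k+4})_\infty$.
\item $\theta=\frac{\pi}{3}$: this gives $\prod_j\frac{1-p^{3j}}{1-p^j}\cdot(p;p)_\infty=(p^3;p^3)_\infty$, which is $(q^{6k+3};q^{6k+3})_\infty$.
\item $\theta=\frac{2\pi}{3}$: this gives $(p;p)_\infty(-p^3;p^3)_\infty/(-p;p)_\infty$. I need to match it to $(p;p)_\infty(p,p^5;p^6)_\infty$. Write $(-p^3;p^3)_\infty=(p^6;p^6)_\infty/(p^3;p^3)_\infty$ and $1/(-p;p)_\infty=(p;p^2)_\infty$. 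Then $(p;p^2)_\infty(p^6;p^6)_\infty/(p^3;p^3)_\infty=(p;p^2)_\infty/(p^3;p^6)_\infty=(p,p^5;p^6)_\infty$.
\end{itemize}

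The only mildly delicate step is these product manipulations for $\theta=\frac{\pi}{2}$ and $\theta=\frac{2\pi}{3}$, which rest on the elementary identities $(-x;x)_\infty=1/(x;x^2)_\infty$ and $(x;x^2)_\infty=(x,x^3;x^4)_\infty$. Everything else is direct substitution into Theorem~\ref{miterateAndrews}. One small point: for $\theta=\pi$ we have $z=-1$, which makes the factor $-q/z=q$ harmless, so no limiting argument is needed.
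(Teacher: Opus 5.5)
Your proposal is correct and follows the paper's own route: the paper obtains the corollary by specializing $z=e^{i\theta}$ with $\theta=0,\frac{\pi}{2},\pi,\frac{\pi}{3},\frac{2\pi}{3}$ in Theorem~\ref{miterateAndrews}, and your simplifications of $(-q/z,-zq)_{n_k}$ and of $(-p/z,-zp,p;p)_\infty$ with $p=q^{2k+1}$ are all accurate. The paper only cites Jacobi's triple product for the right-hand sides. Your product manipulations, such as $(p;p)_\infty(-p^2;p^2)_\infty=(p,p^3,p^4;p^4)_\infty$ and $(p;p)_\infty(-p^3;p^3)_\infty/(-p;p)_\infty=(p;p)_\infty(p,p^5;p^6)_\infty$, supply exactly the details it leaves unstated.
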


The above identities unify many known identities, such as when $k=1$, \eqref{iterateAndrews1} reduces to Entry 4.2.12 in the Ramanujan's ``Lost" Notebook \cite{AB09}; \eqref{iterateAndrews2}  reduces to  Entry 4.2.13 in \cite{AB09}; \eqref{iterateAndrews3} simplifies to the first identity in \cite[Theorem 4.3]{And19} and also proved by Lovejoy in  \cite[(1.9)]{Lov041}; \eqref{iterateAndrews4} gives the identity in \cite[(1.1)]{And19}, which is ``Dyson's favorite identity"; for \eqref{iterateAndrews5}, it can be found as the third identity of Theorem 4.3 in \cite{And19}.

Furthermore, by substituting the asymmetric bilateral Bailey pair \eqref{asymmetricbb1} into \eqref{weakiterate2}, we obtain the following result.

\begin{theorem}\label{miterateYao}
For $k\in \mathbb{N^+}$, we have
\begin{align}
&\sum_{n_1\geq\cdots\geq n_k\geq 0} \frac{(-q/z,-zq,-q)_{n_k} \ q^{\sum_{i=1}^{k} \binom{n_i+1}{2}}}{(q)_{n_1-n_2} \cdots (q)_{n_{k-1}-n_k} (q)_{2n_k+1}}\label{iterateYao}\\
&\qquad\qquad\qquad
=\frac{(-q)_\infty (-q^{k+1}/z, -zq^{k+1}, q^{k+1};q^{k+1})_\infty}{(q)_\infty}. \nonumber
\end{align}
\end{theorem}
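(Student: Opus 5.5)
We obtain Theorem~\ref{miterateYao} by substituting the asymmetric bilateral Bailey pair \eqref{asymmetricbb1} into the multisum identity \eqref{weakiterate2} of Corollary~\ref{weakiterat}. We take
\[
\alpha_n=z^nq^{\binom{n+1}{2}},\qquad \beta_n=\frac{(-1/z)_{n+1}(-zq)_n}{(q)_{2n+1}} .
\]
This is legitimate because \eqref{asymmetricbb1} was shown to be an asymmetric bilateral Bailey pair with $a=1$ and $\lambda=1$, which is exactly the hypothesis of Corollary~\ref{weakiterat}.

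\textbf{Right-hand side.} Inserting $\alpha_n$ into the right side of \eqref{weakiterate2} gives
\[
\frac{(-q)_\infty}{(q)_\infty}\sum_{n=-\infty}^{\infty} q^{(k+1)\binom{n+1}{2}}z^n .
\]
Since $\binom{n+1}{2}=\binom{n}{2}+n$, the summand equals $q^{(k+1)\binom{n}{2}}\,(zq^{k+1})^n$. Apply Jacobi's triple product identity \eqref{JTP} with base $q\mapsto q^{k+1}$ and argument $z\mapsto -zq^{k+1}$. The sum becomes $(-zq^{k+1},-1/z,q^{k+1};q^{k+1})_\infty$.

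\textbf{Left-hand side.} The left side of \eqref{weakiterate2} is
\[
\sum \frac{q^{\sum\binom{n_i+1}{2}}(-q)_{n_k}\,(-1/z)_{n_k+1}(-zq)_{n_k}}{(q)_{n_1-n_2}\cdots(q)_{n_{k-1}-n_k}(q)_{2n_k+1}} .
\]
Use $(-1/z)_{n+1}=(1+1/z)(-q/z)_n$ to write this as $(1+1/z)$ times the left side of \eqref{iterateYao}.

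\textbf{Comparing the two sides.} Next we must reconcile the constant factor $1+1/z$. On the product side, $(-1/z;q^{k+1})_\infty=(1+1/z)(-q^{k+1}/z;q^{k+1})_\infty$. Dividing both sides by $1+1/z$ then gives exactly \eqref{iterateYao}. This also agrees with the $k=1$ case, \eqref{nonsymbbailey302}.

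The only delicate step is this bookkeeping: pulling the factor $1+1/z$ out of both sides consistently, and shifting the theta-function sum so that Jacobi's triple product applies in the stated normalization. Everything else is direct substitution.
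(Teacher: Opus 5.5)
Your proposal is correct and follows the paper's route: substitute the pair \eqref{asymmetricbb1} into \eqref{weakiterate2}, evaluate the bilateral sum by Jacobi's triple product with base $q^{k+1}$, and cancel the common factor $1+1/z$ that comes from $(-1/z)_{n_k+1}$ and from $(-1/z;q^{k+1})_\infty$. Dividing by $1+1/z$ requires $z\neq -1$, but the case $z=-1$ then follows by continuity in $z$.
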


When $k=1$, it reduces to \eqref{nonsymbbailey302}. In fact, when we take $q \mapsto q^2$ and $z=-q^{t-2k-2}$ in Theorem \ref{miterateYao}, also equivalent to $z=-q^{2k+2-t}$, we get the following identity
\begin{align*}
&\sum_{n_1\geq\cdots\geq n_k\geq 0} \frac{(q^{t-2k},-q^2; q^2)_{n_k}\ (q^{2k+2-t}; q^2)_{n_k +1} \ q^{\sum_{i=1}^{k} n_i(n_i+1) }} {(q^2; q^2)_{n_1-n_2} \cdots (q^2; q^2)_{n_{k-1}-n_k} (q^2; q^2)_{2n_k+1}}\\
&\qquad\qquad\qquad\qquad\qquad\qquad
=\frac{(-q^2; q^2)_\infty (q^t, q^{2k+2-t}, q^{2k+2}; q^{2k+2})_\infty}{(q^2; q^2)_\infty},
\end{align*}
which right hand is equivalent to \eqref{eq:AGev} given by  Andrews, hence we can get another new identity for their left hands
\begin{align*}
&\sum_{n_1\geq\cdots\geq n_k\geq 0} \frac{(q^{t-2k},-q^2; q^2)_{n_k}\ (q^{2k+2-t}; q^2)_{n_k +1} \ q^{\sum_{i=1}^{k} n_i(n_i+1) }} {(q^2; q^2)_{n_1-n_2} \cdots (q^2; q^2)_{n_{k-1}-n_k} (q^2; q^2)_{2n_k+1}}\\
&\qquad\qquad
=\sum_{n_1 \geq \cdots \geq n_{k-1} \geq 0} \frac{q^{n_1^2+ \cdots + n_{k-1}^2 +n_1 -n_2 +n_3 -n_{t-2} +n_{t-1} +n_t + \cdots + n_{k-1}}}{(q^2;q^2)_{n_1-n_2} \cdots (q^2;q^2)_{n_{k-2} -n_{k-1}} (q^2;q^2)_{n_{k-1}}}.
\end{align*}

Similarly, by taking $z=e^{i\theta}$ and $\theta=0, \frac{\pi}{2}, \pi, \frac{\pi}{3}, \frac{2\pi}{3}$, into Theorem \ref{miterateYao},  we get the following Andrews--Gordon type identities, respectively.

\begin{corollary}\label{coroiterateYao}
For $k\in \mathbb{N^+}$, we have
\begin{subequations}
\begin{align*}
&\sum_{n_1\geq\cdots\geq n_k\geq 0} \frac{(-q)_{n_k}^3 \ q^{\sum_{i=1}^k \binom{n_i+1}{2}}}{(q)_{n_1-n_2} \cdots (q)_{n_{k-1}-n_k} (q)_{2n_k+1}}\\
&\qquad\qquad\qquad\qquad
=\frac{(-q)_\infty(-q^{k+1}, -q^{k+1}, q^{k+1}; q^{k+1})_\infty}{(q)_\infty},\\
&\sum_{n_1\geq\cdots\geq n_k\geq 0} \frac{(-q)_{n_k} (-q^2; q^2)_{n_k} \ q^{\sum_{i=1}^k \binom{n_i+1}{2}}} {(q)_{n_1-n_2} \cdots (q)_{n_{k-1}-n_k} (q)_{2n_k+1}} \\
&\qquad\qquad\qquad\qquad
=\frac{(-q)_\infty(q^{k+1}, q^{3k+3}, q^{4k+4}; q^{4k+4})_\infty}{(q)_\infty},\\
&\sum_{n_1\geq\cdots\geq n_k\geq 0} \frac{(q)_{n_k}\ q^{\sum_{i=1}^k \binom{n_i+1}{2}}} {(q)_{n_1-n_2} \cdots(q)_{n_{k-1}-n_k}(q;q^2)_{n_k+1}}
=\frac{(-q)_\infty(q^{k+1};q^{k+1})_\infty^3}{(q)_\infty},\nonumber\\
&\sum_{n_1\geq\cdots\geq n_k\geq 0}
\frac{(-q)_{n_k} (q^3; q^3)_{n_k} \ q^{\sum_{i=1}^k \binom{n_i+1}{2}}} {(q)_{n_1-n_2} \cdots (q)_{n_{k-1}-n_k}(q)_{2n_k+1}(q)_{n_k}}
=\frac{(-q)_\infty(q^{3k+3};q^{3k+3})_\infty}{(q)_\infty},\nonumber\\
&\sum_{n_1\geq\cdots\geq n_k\geq 0} \frac{(-q^3; q^3)_{n_k} \ q^{\sum_{i=1}^k \binom{n_i+1}{2}}}
{(q)_{n_1-n_2}\cdots(q)_{n_{k-1}-n_k}(q)_{2n_k+1}}\\
&\qquad\qquad\qquad\qquad
=\frac{(-q)_\infty(q^{k+1}; q^{k+1})_\infty (q^{k+1},q^{5k+5};q^{6k+6})_\infty}{(q)_\infty}.
\end{align*}
\end{subequations}
\end{corollary}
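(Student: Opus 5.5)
The plan is to specialize Theorem~\ref{miterateYao} at $z=e^{i\theta}$ for the five listed angles. In each case I would simplify two things: the finite product $(-q/z,-zq)_{n_k}=\prod_{j=1}^{n_k}(1+2q^j\cos\theta+q^{2j})$ from \eqref{triangle fomular} on the left, and the infinite product $(-p/z,-zp,p;p)_\infty$ with $p=q^{k+1}$ on the right. The common factors $(-q)_{n_k}$ on the left and $(-q)_\infty/(q)_\infty$ on the right are carried along unchanged. No new summation is needed; everything reduces to elementary product manipulations, done in the order of the corollary.

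For $\theta=0$ ($z=1$) the left factor becomes $(-q)_{n_k}^2$, giving $(-q)_{n_k}^3$, and the right side is read off directly. For $\theta=\pi/2$ ($z=i$) we have $(1+iq^j)(1-iq^j)=1+q^{2j}$, so the left factor is $(-q^2;q^2)_{n_k}$. On the right, $(-ip,ip;p)_\infty=(-p^2;p^2)_\infty$, and I would then verify
\[
(p;p)_\infty(-p^2;p^2)_\infty=\frac{(p;p)_\infty(p^4;p^4)_\infty}{(p^2;p^2)_\infty}=(p;p^2)_\infty(p^4;p^4)_\infty=(p,p^3,p^4;p^4)_\infty .
\]
For $\theta=\pi$ ($z=-1$) the left factor is $(q)_{n_k}^2$, which combines with $(-q)_{n_k}$. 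Using
\[
(q)_{2n+1}=(q;q^2)_{n+1}(q^2;q^2)_n=(q;q^2)_{n+1}(q)_n(-q)_n ,
\]
the summand reduces to $(q)_{n_k}/(q;q^2)_{n_k+1}$, and the right side becomes $(p;p)_\infty^3$.

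For $\theta=\pi/3$ we have $1+q^j+q^{2j}=(1-q^{3j})/(1-q^j)$, so the left factor is $(q^3;q^3)_{n_k}/(q)_{n_k}$. The same identity applied to $p^m$ gives $(-e^{i\pi/3}p,-e^{-i\pi/3}p;p)_\infty=(p^3;p^3)_\infty/(p;p)_\infty$, so the right side collapses to $(-q)_\infty(q^{3k+3};q^{3k+3})_\infty/(q)_\infty$. For $\theta=2\pi/3$ we have $1-q^j+q^{2j}=(1+q^{3j})/(1+q^j)$. On the left this cancels the extra $(-q)_{n_k}$, leaving $(-q^3;q^3)_{n_k}$. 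On the right it gives $(p;p)_\infty(-p^3;p^3)_\infty/(-p;p)_\infty$, and I would reduce this via
\[
\frac{(-p^3;p^3)_\infty}{(-p;p)_\infty}=\frac{(p^6;p^6)_\infty(p;p)_\infty}{(p^3;p^3)_\infty(p^2;p^2)_\infty}=\frac{(p;p^2)_\infty}{(p^3;p^6)_\infty}=(p,p^5;p^6)_\infty .
\]

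The only step requiring any care is the set of infinite-product rearrangements in the cases $\theta=\pi/2$ and $\theta=2\pi/3$, where $(-x;x)_\infty=(x^2;x^2)_\infty/(x;x)_\infty$ has to be applied in the right order. Once the reductions displayed above are checked, the corollary follows at once.
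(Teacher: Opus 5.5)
Your proposal is correct and follows the paper's own argument: the corollary comes from setting $z=e^{i\theta}$ with $\theta=0,\frac{\pi}{2},\pi,\frac{\pi}{3},\frac{2\pi}{3}$ in Theorem~\ref{miterateYao}, and all of your product reductions check out, including $(q)_{2n+1}=(q;q^2)_{n+1}(q)_n(-q)_n$ for $\theta=\pi$ and $(-p^3;p^3)_\infty/(-p;p)_\infty=(p,p^5;p^6)_\infty$ for $\theta=2\pi/3$. You have only written out the elementary simplifications that the paper leaves implicit.
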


Specially, when $k=1$ in Corollary \ref{coroiterateYao}, these identities are reduced to some known results including \cite[Theorem 3.5]{Yao25} and \cite[(1.10)]{Lov041}.

By inserting the asymmetric bilateral Bailey pair \eqref{asymmetricbb1} into \eqref{weakiterate4}, it implies the following identity.

\begin{theorem}\label{miterateweak3}
For $k\in \mathbb{N^+}$, we have
\begin{align}
&\sum_{n_1\geq\cdots\geq n_k\geq 0} \frac{(-1)^{\sum_{i=1}^k n_i} \ (-1/z)_{n_k+1} \ (-zq)_{n_k}} {(q)_{n_1-n_2} \cdots (q)_{n_{k-1}-n_k} (q^{n_k+1})_{n_k+1}} q^{\sum_{i=1}^{k} \binom{n_i+1}{2}}\\
&\qquad=\sum_{n=0}^\infty(-1)^{kn}q^{(k+1)
\binom{n+1}{2}} \ (z^{n}+z^{-n-1}).\nonumber
\end{align}
\end{theorem}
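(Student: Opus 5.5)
Substitute the asymmetric bilateral Bailey pair \eqref{asymmetricbb1}, $\alpha_n=z^nq^{\binom{n+1}{2}}$ and $\beta_n=(-1/z)_{n+1}(-zq)_n/(q)_{2n+1}$, into the multisum identity \eqref{weakiterate4} of Corollary \ref{weakiterat}. Corollary \ref{weakiterat} already provides the iteration through the bilateral Bailey chain of Lemma \ref{bbl} with $\lambda=1$ (where the error term vanishes). Its last step uses the weak form \eqref{nonsymbbailey4}, which comes from the limit $x\to\infty$, $y\to q$ in Lemma \ref{bblit}. Since these are assumed, only simplification of both sides remains.

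\emph{Left-hand side.} The summand of \eqref{weakiterate4} carries the factor $(q)_{n_k}\beta_{n_k}$. For $\beta_{n_k}$ from \eqref{asymmetricbb1} we use $(q)_{2n+1}/(q)_n=(q^{n+1})_{n+1}$, so that
\[
(q)_{n_k}\beta_{n_k}=\frac{(-1/z)_{n_k+1}(-zq)_{n_k}}{(q^{n_k+1})_{n_k+1}}.
\]
This gives exactly the stated left-hand side, including the sign $(-1)^{\sum n_i}$ and the weight $q^{\sum\binom{n_i+1}{2}}$.

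\emph{Right-hand side.} Compute $\alpha_n+\alpha_{-n-1}$. We have $\alpha_n=z^nq^{\binom{n+1}{2}}$, and $\alpha_{-n-1}=z^{-n-1}q^{\binom{-n}{2}}$. Since $\binom{-n}{2}=n(n+1)/2=\binom{n+1}{2}$, it follows that
\[
\alpha_n+\alpha_{-n-1}=q^{\binom{n+1}{2}}\bigl(z^n+z^{-n-1}\bigr).
\]
Multiplying by $(-1)^{kn}q^{k\binom{n+1}{2}}$ gives $(-1)^{kn}q^{(k+1)\binom{n+1}{2}}(z^n+z^{-n-1})$, which is the stated right-hand side.

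The only point needing care is the convergence and limit justification behind \eqref{nonsymbbailey4}. In particular, one must check that the bilateral sum $\sum_n \alpha_n\,(q/x)^n\cdots$ converges for this $\alpha_n$ as $x\to\infty$ and $y\to q$, for $z$ in a suitable annulus. Here the Gaussian factor $q^{(k+1)\binom{n+1}{2}}$ dominates $z^{\pm n}$, so the sums converge absolutely for all $z\neq0$, and the identity then extends by analytic continuation. With that remark the theorem follows directly; the index bookkeeping is routine.
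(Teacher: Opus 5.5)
Your proposal is correct and matches the paper's argument exactly: the paper likewise obtains the theorem by inserting the pair \eqref{asymmetricbb1} into \eqref{weakiterate4}. The only work is the simplification you carry out, namely $(q)_{n_k}\beta_{n_k}=(-1/z)_{n_k+1}(-zq)_{n_k}/(q^{n_k+1})_{n_k+1}$ together with $\alpha_{-n-1}=z^{-n-1}q^{\binom{n+1}{2}}$.
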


Setting $k=1$, it reduces to Theorem \ref{nonsymbbailey31}. Taking $z=e^{i\theta}$ with $\theta=0, \frac{\pi}{2}, \frac{\pi}{3}, \frac{2\pi}{3}$ into Theorem \ref{miterateweak3} respectively, it yields the following results.

\begin{corollary}
For $k\in \mathbb{N^+}$, we have
\begin{align*}
&\sum_{n_1\geq\cdots\geq n_k\geq 0} \frac{(-1)^{\sum_{i=1}^kn_i} \ q^{\sum_{i=1}^k \binom{n_i+1}{2}}\ (-q)_{n_k}^2}
{(q)_{n_1-n_2} \cdots (q)_{n_{k-1}-n_k} (q^{n_k+1})_{n_k+1}}
=\sum_{n=0}^{\infty} (-1)^{kn}q^{(k+1) \binom{n+1}{2}},\\[2mm]
&\sum_{n_1\geq\ldots\geq n_k\geq 0} \frac{(-1)^{\sum_{i=1}^kn_i}\ q^{\sum_{i=1}^k \binom{n_i+1}{2}}\ (-q^2;q^2)_{n_k}}
{(q)_{n_1-n_2} \cdots (q)_{n_{k-1}-n_k} (q^{n_k+1})_{n_k+1}}\\
&\qquad\qquad\qquad
=\sum_{n=0}^{\infty} (-1)^{n} q^{(k+1)\binom{2n+1}{2}}
\big(1+(-1)^{k+1} q^{(k+1)(2n+1)}\big),\\[2mm]
&\sum_{n_1\geq\cdots\geq n_k\geq 0} \frac{(-1)^{\sum_{i=1}^kn_i}\ q^{\sum_{i=1}^k \binom{n_i+1}{2}}\ (q^3;q^3)_{n_k}}
{(q)_{n_1-n_2} \cdots (q)_{n_{k-1}-n_k} (q)_{2n_k+1}}\\
&\qquad\qquad\qquad
=\sum_{n=0}^{\infty} (-1)^{(k+1)n} q^{(k+1) \binom{3n+1}{2}} \big(1-q^{(k+1)(6n+3)}\big),\\[2mm]
&\sum_{n_1\geq\cdots\geq n_k\geq 0} \frac{(-1)^{\sum_{i=1}^k n_i}\ q^{\sum_{i=1}^k \binom{n_i+1}{2}}\ (-q^3;q^3)_{n_k}}
{(q)_{n_1-n_2} \cdots (q)_{n_{k-1}-n_k} (q^{n_k+1})_{n_k+1} (-q)_{n_k}}\\[2mm]
&\
=\sum_{n=0}^{\infty} (-1)^{kn} q^{(k+1) \binom{3n+1}{2}} \big(1-2(-1)^{k} q^{(k+1)(3n+1)} +q^{(k+1)(6n+3)}\big).
\end{align*}
\end{corollary}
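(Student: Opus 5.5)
The plan is to specialize Theorem \ref{miterateweak3} at $z=e^{i\theta}$ and simplify both sides separately: the left side through the factor $(-1/z)_{n_k+1}(-zq)_{n_k}$, the right side by splitting the sum over residue classes.

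\emph{Left hand side.} With $z=e^{i\theta}$ we have
\begin{align*}
(-1/z)_{n+1}(-zq)_n=(1+e^{-i\theta})\prod_{j=1}^{n}(1+2q^j\cos\theta+q^{2j}).
\end{align*}
We divide both sides of the theorem by $1+e^{-i\theta}$, as in the proof of \eqref{nonsymbbailey313}, and simplify the product for each angle:
\begin{itemize}
\item for $\theta=0$, the product is $(-q)_n^2$;
\item for $\theta=\pi/2$, it is $\prod_j(1+q^{2j})=(-q^2;q^2)_n$;
\item for $\theta=\pi/3$, it is $\prod_j(1+q^j+q^{2j})=(q^3;q^3)_n/(q)_n$;
\item for $\theta=2\pi/3$, it is $\prod_j(1-q^j+q^{2j})=(-q^3;q^3)_n/(-q)_n$.
\end{itemize}
In the $\pi/3$ case the factor $(q)_{n_k}$ in the denominator cancels against $(q)_{n_k}(q^{n_k+1})_{n_k+1}=(q)_{2n_k+1}$. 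This gives exactly the four displayed left sides.

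\emph{Right hand side.} After dividing, it becomes
\begin{align*}
\frac{1}{1+e^{-i\theta}}\sum_{n\ge0}(-1)^{kn}q^{(k+1)\binom{n+1}{2}}\bigl(e^{in\theta}+e^{-i(n+1)\theta}\bigr).
\end{align*}
For $\theta=0$ this is immediate. For $\theta=\pi/2$ we split $n$ mod $4$. For $\theta=\pi/3$ and $2\pi/3$ we split $n$ mod $6$ (respectively mod $3$), and compute the coefficient $(\omega^n+\omega^{-n-1})/(1+\omega^{-1})$ for each residue.

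In each case these coefficients are real. For $\theta=2\pi/3$ they are $1,2,1$ for $n\equiv0,1,2\pmod 3$, and for $\pi/2$ they are $1,1,-1,-1$ mod $4$. We then group consecutive residues $3n,3n+1,3n+2$ (or $4n,\dots,4n+3$) and factor out $q^{(k+1)\binom{3n+1}{2}}$. The relevant exponent differences are
\begin{align*}
\binom{3n+2}{2}-\binom{3n+1}{2}=3n+1,\qquad \binom{3n+3}{2}-\binom{3n+1}{2}=6n+3.
\end{align*}
These give the factors $1-2(-1)^kq^{(k+1)(3n+1)}+q^{(k+1)(6n+3)}$. The sign $(-1)^{kn}$ evaluated on each residue class produces the parity-dependent signs $(-1)^{k+1}$, $(-1)^{(k+1)n}$, and so on.

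For $\theta=\pi/3$ some residues have coefficient $0$; these are $n\equiv 1,4 \pmod 6$, since there $\omega^n=-\omega^{-n-1}$. The surviving terms pair up: $n\equiv0$ with $n\equiv2$ has coefficient $1$, and $n\equiv3$ with $n\equiv5$ has coefficient $-1$. Reindexing by $3n$ then yields the stated $1-q^{(k+1)(6n+3)}$ form.

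The main obstacle is this bookkeeping of signs $(-1)^{kn}$ against the root-of-unity coefficients, especially in the $\pi/3$ case. I would check each final identity at $k=1$ against the corresponding Corollary following Theorem \ref{nonsymbbailey31} to catch sign errors.
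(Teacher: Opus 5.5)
Your plan follows the paper's own route: specialize Theorem~\ref{miterateweak3} at $z=e^{i\theta}$, divide by $1+e^{-i\theta}$, and sort the right side by residue classes. Your treatment of the four left-hand sides is correct.

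The gap is in the residue coefficients you state. Two of the tables are wrong, and if carried through they give the wrong signs. Multiplying numerator and denominator by $z^{1/2}$ gives
\[
c_m:=\frac{z^m+z^{-m-1}}{1+z^{-1}}=\frac{\cos\big((m+\tfrac12)\theta\big)}{\cos(\theta/2)} .
\]
\begin{itemize}
\item For $\theta=\pi/2$ one has $c_m=1,-1,-1,1$ for $m\equiv0,1,2,3\pmod 4$, not $1,1,-1,-1$.
\item For $\theta=2\pi/3$ one has $c_m=1,-2,1$, not $1,2,1$. Directly: $1+\omega^{-1}=-\omega$ and $\omega+\omega^{-2}=2\omega$.
\end{itemize}
With your values, the class $m=4n+1$ carries $(-1)^{k}$, and you would get the factor $1+(-1)^kq^{(k+1)(2n+1)}$. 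Likewise the class $m=3n+1$ carries $+2(-1)^k(-1)^{kn}$, and you would get $1+2(-1)^kq^{(k+1)(3n+1)}+q^{(k+1)(6n+3)}$. At $k=1$ these already contradict \eqref{nonsymbbailey311} and \eqref{nonsymbbailey313}. The ``$2$'' in the paper's proof of \eqref{nonsymbbailey313} already has the sign $(-1)^{3n+1}$ absorbed, which may be where your $1,2,1$ came from. So your claim that the grouping gives $1-2(-1)^kq^{(k+1)(3n+1)}+\cdots$ does not follow from what you wrote.

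With the correct values everything closes.
\begin{itemize}
\item For $\theta=\pi/2$, $(-1)^{km}c_m$ equals $1,(-1)^{k+1},-1,(-1)^k$ on $m\equiv0,1,2,3\pmod 4$. That is, it is $(-1)^n$ at $m=2n$ and $(-1)^{n+k+1}$ at $m=2n+1$. Since $\binom{2n+2}{2}-\binom{2n+1}{2}=2n+1$, this is exactly the second identity.
\item For $\theta=2\pi/3$, $(-1)^{k(3n+1)}c_1=-2(-1)^k(-1)^{kn}$, which gives the fourth identity.
\item For $\theta=\pi/3$, the values are $1,0,-1,-1,0,1$ modulo $6$. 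Your pairing is right only if you mean that the two members of each surviving pair carry opposite signs: $c_0=-c_2=1$ and $c_3=-c_5=-1$. That is what produces $1-q^{(k+1)(6n+3)}$. The factor $(-1)^{km}=(-1)^k$ on $m\equiv3,5\pmod 6$ then becomes $(-1)^{(k+1)n}$ after writing $m=3n,\,3n+2$.
\end{itemize}
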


\section{Identities on false theta functions and generalized Hecke--type series}

In this section, by fitting the asymmetric bilateral Bailey pair \eqref{asymmetricbb1} into   the bilateral Bailey lemmas given by Andrews and Warnaar \cite{AW07}, we derive identities on false theta functions and generalized Hecke--type series.

Recall that a series is of generalized Hecke--type if it has the following form
\[
\sum_{(n,j)\in D} (-1)^{H(n,j)} q^{Q(n,j)+L(n,j)},
\]
where $H$ and $L$ are linear forms, $Q$ is a quadratic form, and $D$ is some subset of $\mathbb{Z}\times \mathbb{Z}$. Moreover, if  $Q(n,j)\geq 0$ for any $(n,j)\in D$, it is called Hecke--type series, which have received extensive attention since the study of Jacobi and Hecke, see, for example, \cite{And841, HM14}.

Andrews and Warnaar \cite{AW07} gave the following asymmetric bilateral Bailey lemma for specific bilateral Bailey pair $(\alpha_n(1,q^2),\ \beta_n(1,q^2))$ such  that
\begin{align}
\beta_n=\sum_{r=-n-1}^n\frac{\alpha_r}{(q^2; q^2)_{n-r}(q^2; q^2)_{n+r+1}}.\label{asymmetric}
\end{align}

\begin{lemma}\label{thm11}
If  $(\alpha_n, \beta_n)$ satisfies \eqref{asymmetric}, then
\begin{align}
\sum_{n=0}^\infty\frac{(q^2; q^2)_{2n+1}q^n}{(-q)_{2n+2}}\beta_n=\sum_{n=0}^\infty q^{n(n+2)}\sum_{j=-n-1}^nq^{-j(j+1)}\alpha_j,\label{hecke1}\\
\sum_{n=0}^\infty(q)_{2n+1}q^n\beta_n=\sum_{n=0}^\infty q^{n(n+3)/2}\sum_{j=-\lfloor\frac{n}{2}\rfloor-1}^{\lfloor\frac{n}{2}\rfloor}
q^{-2j(j+1)}\alpha_j.\label{hecke2}
\end{align}
\end{lemma}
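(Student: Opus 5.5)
The plan is to substitute the defining relation \eqref{asymmetric} into each left hand side and interchange the order of summation. This reduces each identity to a closed-form evaluation of an inner sum over $n$ for fixed $r$.

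\textbf{Identity \eqref{hecke1}.} The first step is the interchange
\begin{align*}
\sum_{n\ge0}\frac{(q^2;q^2)_{2n+1}q^n}{(-q)_{2n+2}}\beta_n
=\sum_{r=-\infty}^{\infty}\alpha_r\sum_{n\ge \max(r,-r-1)}\frac{(q^2;q^2)_{2n+1}\,q^n}{(-q)_{2n+2}(q^2;q^2)_{n-r}(q^2;q^2)_{n+r+1}} .
\end{align*}
The inner sum is symmetric under $r\mapsto -r-1$, so it suffices to treat $r\ge0$. Set $n=r+m$ and use $(q^2;q^2)_{2n+1}=(q;q)_{2n+1}(-q;q)_{2n+1}$, which gives the simplification
\[
\frac{(q^2;q^2)_{2n+1}}{(-q)_{2n+2}}=\frac{(q)_{2n+1}}{1+q^{2n+2}}.
\]
The right hand side of \eqref{hecke1} has coefficient of $\alpha_r$ equal to $\sum_{n\ge \max(r,-r-1)} q^{n(n+2)-r(r+1)}$. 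So the target is the single-variable identity
\[
\sum_{m\ge0}\frac{(q)_{2r+2m+1}\,q^{r+m}}{(1+q^{2r+2m+2})(q^2;q^2)_m(q^2;q^2)_{2r+m+1}}=\sum_{m\ge0}q^{(r+m)(r+m+2)-r(r+1)}.
\]
This is a false-theta tail. I would prove it by expressing the left side as a basic hypergeometric series in $q^2$ with base shifts. A cleaner route is to show that both sides, denoted $F(r)$, satisfy the same first-order recurrence $F(r)-q^{r+1}\cdot(\ldots)F(r+1)=q^{r}$-type, using telescoping in $m$. Both sides also tend to $0$ as $r\to\infty$, which pins down the solution.

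Alternatively, I would first apply the BMS lemma (Lemma \ref{bbpinfinite}) in base $q^2$ to the pair $(\alpha_r,(1-q^2)\beta_n)$ regarded as a bilateral Bailey pair relative to $a=q^2$, as in the proof of Lemma \ref{bbl-ls}. The parameters would be chosen to produce the factor $(q)_{2n+1}/(-q)_{2n+2}$, namely $x=q^{3/2}$, $y=-q^{3/2}$ up to normalization. After that, only a limiting case of a $q$-Gauss or Rogers--Fine evaluation would remain, and the Rogers--Fine identity naturally produces the partial theta sums $\sum_{n\ge|j|}$.

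\textbf{Identity \eqref{hecke2}.} The interchange gives the inner sum
\[
\sum_{n}\frac{(q)_{2n+1}q^n}{(q^2;q^2)_{n-r}(q^2;q^2)_{n+r+1}}.
\]
The right side's coefficient of $\alpha_r$ is $\sum_{n\ge 2r} q^{n(n+3)/2-2r(r+1)}$ for $r\ge0$, with the symmetric statement for $r<0$. I would write
\[
(q)_{2n+1}=(q;q^2)_{n+1}(q^2;q^2)_n
\]
and again reduce to a Rogers--Fine type evaluation. The mixing of bases $q$ and $q^2$ is exactly what produces the floor $\lfloor n/2\rfloor$ and the form $n(n+3)/2$.

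\textbf{Expected obstacle.} The main difficulty is establishing these two inner-sum evaluations, which are finite-$r$ false theta identities. I would try the Rogers--Fine identity
\[
\sum_{n\ge0}\frac{(aq)_n t^n}{(bq)_n}=\sum_{n\ge0}\frac{(aq)_n(atq/b)_n b^n t^n q^{n^2}(1-atq^{2n+1})}{(bq)_n(t)_{n+1}}
\]
with suitable specializations. If that fails, I would fall back on verifying the recurrence in $r$ directly.
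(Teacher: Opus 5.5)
The paper gives no proof of this lemma; it quotes it from Andrews and Warnaar \cite{AW07}. Your reduction is correct. It is the standard conjugate-Bailey-pair step: interchange the sums, use the symmetry $r\mapsto -r-1$, and use $(q^2;q^2)_{2n+1}/(-q)_{2n+2}=(q)_{2n+1}/(1+q^{2n+2})$. The lemma then reduces to two evaluations, for $r\ge 0$:
\begin{align*}
F_r&:=\sum_{n\ge r}\frac{(q)_{2n+1}\,q^n}{(1+q^{2n+2})(q^2;q^2)_{n-r}(q^2;q^2)_{n+r+1}}=\sum_{n\ge r}q^{n(n+2)-r(r+1)},\\
H_r&:=\sum_{n\ge r}\frac{(q)_{2n+1}\,q^n}{(q^2;q^2)_{n-r}(q^2;q^2)_{n+r+1}}=\sum_{n\ge 2r}q^{n(n+3)/2-2r(r+1)}.
\end{align*}
These evaluations are the whole content of the lemma, and you do not prove them. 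Your recurrence is left as ``$F(r)-q^{r+1}(\ldots)F(r+1)=q^r$-type'', and no Rogers--Fine specialization is named. The BMS route cannot work as described, for two reasons. First, the $\beta$-side weight $(q)_{2n+1}q^n/(1+q^{2n+2})$ is not of the form $(x,y;q^2)_n(q^4/xy)^n$ for any $x,y$; your choice $x=q^{3/2}$, $y=-q^{3/2}$ gives $(q^3;q^4)_n(-q)^n$. Second, Lemma \ref{bbpinfinite} attaches to each $\alpha_j$ a single $q$-hypergeometric term in $j$, whereas the required coefficient is a partial theta tail. As written, the proposal stops exactly where the proof has to begin.

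Your recurrence idea does go through, and this is the missing step. Since $1/(q^2;q^2)_{n+r+1}=(1-q^{2n+2r+4})/(q^2;q^2)_{n+r+2}$ and $1/(q^2;q^2)_{n-r-1}=(1-q^{2n-2r})/(q^2;q^2)_{n-r}$, for any constant $\kappa$ we have
\[
\frac{1}{(q^2;q^2)_{n-r}(q^2;q^2)_{n+r+1}}-\frac{\kappa}{(q^2;q^2)_{n-r-1}(q^2;q^2)_{n+r+2}}
=\frac{(1-q^{2n+2r+4})-\kappa\,(1-q^{2n-2r})}{(q^2;q^2)_{n-r}(q^2;q^2)_{n+r+2}}.
\]
The numerator equals $(1-q^{2r+2})(1+q^{2n+2})$ for $\kappa=q^{2r+2}$, and $1-q^{4r+4}$ for $\kappa=q^{4r+4}$. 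The first choice cancels the awkward factor $1+q^{2n+2}$ in $F_r$. Since the $n=r$ term of the shifted sums vanishes, this gives
\[
F_r-q^{2r+2}F_{r+1}=(1-q^{2r+2})V_r,\qquad H_r-q^{4r+4}H_{r+1}=(1-q^{4r+4})V_r,\qquad V_r:=\sum_{n\ge r}\frac{(q)_{2n+1}\,q^n}{(q^2;q^2)_{n-r}(q^2;q^2)_{n+r+2}}.
\]
Write $(q)_{2n+1}=(q;q^2)_{n+1}(q^2;q^2)_n$ and $n=r+m$. Then
\[
V_r=\frac{q^r(q;q^2)_{r+1}(q^2;q^2)_r}{(q^2;q^2)_{2r+2}}\;{}_2\phi_1\!\left(q^{2r+3},q^{2r+2};q^{4r+6};q^2,q\right),
\]
and the argument $q$ is exactly $c/ab$. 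The $q$-Gauss sum therefore gives $V_r=q^r/(1-q^{2r+2})$. Hence
\[
F_r=q^r+q^{2r+2}F_{r+1},\qquad H_r=q^r+q^{3r+2}+q^{4r+4}H_{r+1}.
\]
These are precisely the recurrences satisfied by the two right-hand sides. All four quantities are $O(q^r)$, so iterating in $r$ forces equality, for $r\ge0$ and hence, by the symmetry, for all $r$.
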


By substituting $q\mapsto q^2$ in the asymmetric bilateral Bailey pair \eqref{asymmetricbb1}, then inserting it into formula \eqref{hecke1}, we obtain the following identity.

\begin{theorem}\label{hecke11}
We have
\begin{align}\label{hecke-1}
\sum_{n=0}^\infty \frac{(-1/z;q^2)_{n+1} \ (-zq^2; q^2)_n}{(-q)_{2n+2}}q^n = \sum_{n=0}^\infty q^{n(n+2)} \sum_{j=-n-1}^n z^{j}.
\end{align}
\end{theorem}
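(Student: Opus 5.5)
The plan is to substitute the pair \eqref{asymmetricbb1}, with $q\mapsto q^2$, into \eqref{hecke1} of Lemma \ref{thm11} and simplify both sides.

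First, check that after $q\mapsto q^2$ the pair \eqref{asymmetricbb1} has the shape \eqref{asymmetric}. From \eqref{Andrewidentity1-transform} with $q\mapsto q^2$, dividing by $(q^2;q^2)_{2n+1}$ gives
\begin{align*}
\frac{(-1/z;q^2)_{n+1}(-zq^2;q^2)_n}{(q^2;q^2)_{2n+1}}=\sum_{j=-n-1}^{n}\frac{z^{j}q^{j(j+1)}}{(q^2;q^2)_{n-j}(q^2;q^2)_{n+j+1}}.
\end{align*}
This is exactly \eqref{asymmetric} with
\[
\alpha_j=z^jq^{j(j+1)},\qquad \beta_n=\frac{(-1/z;q^2)_{n+1}(-zq^2;q^2)_n}{(q^2;q^2)_{2n+1}}.
\]
Only the sum over $r$ from $-n-1$ to $n$ is needed, so the Lemma applies.

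Next, substitute into \eqref{hecke1}. On the left side, the factor $(q^2;q^2)_{2n+1}$ cancels the denominator of $\beta_n$. This leaves
\[
\sum_{n\ge0}\frac{(-1/z;q^2)_{n+1}(-zq^2;q^2)_n\,q^n}{(-q)_{2n+2}},
\]
which is precisely the left side of \eqref{hecke-1}. On the right side, the inner summand becomes $q^{-j(j+1)}\alpha_j=z^j$, which gives $\sum_{n\ge0}q^{n(n+2)}\sum_{j=-n-1}^n z^j$.

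The only point needing care is convergence and the legitimacy of applying Lemma \ref{thm11} for general $z$. Both sides are power series in $q$ whose coefficients are Laurent polynomials in $z$, and the series converge for $|q|<1$ and fixed $z\neq0$. Since the lemma is a formal identity valid for any pair satisfying \eqref{asymmetric}, no obstacle is expected, and the identity follows directly.
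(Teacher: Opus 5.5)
Your proposal is correct and follows the paper's own argument: replace $q$ by $q^2$ in the pair \eqref{asymmetricbb1}, check through \eqref{Andrewidentity1-transform} that it satisfies \eqref{asymmetric} with $\alpha_j=z^jq^{j(j+1)}$, and insert it into \eqref{hecke1}, where $(q^2;q^2)_{2n+1}$ cancels on the left and $q^{-j(j+1)}\alpha_j=z^j$ on the right. Your explicit check of the pair's shape and your convergence remark only spell out what the paper's one-line derivation leaves implicit.
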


Then taking $z=e^{i\theta}$ into Theorem \ref{hecke11} and then letting $\theta=0$, that is, $z=1$, the following result can be derived.

\begin{corollary}\label{falsetheta1}
We have
\begin{align}
\sum_{n=0}^\infty\frac{(-q^2; q^2)^2_n}{(-q)_{2n+2}}q^n=\sum_{n=0}^\infty (n+1)q^{n(n+2)}.
\end{align}
\end{corollary}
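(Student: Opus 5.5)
Corollary~\ref{falsetheta1} is the specialization $z=1$ of Theorem~\ref{hecke11}, so the plan is to substitute $z=1$ into \eqref{hecke-1} and simplify both sides. We first recall how Theorem~\ref{hecke11} is obtained, so that the specialization is legitimate. Replacing $q$ by $q^2$ in \eqref{asymmetricbb1} gives the pair
\[
\alpha_n=z^nq^{n(n+1)},\qquad
\beta_n=\frac{(-1/z;q^2)_{n+1}(-zq^2;q^2)_n}{(q^2;q^2)_{2n+1}},
\]
which satisfies \eqref{asymmetric}. Inserting it into \eqref{hecke1}, the factor $(q^2;q^2)_{2n+1}$ cancels on the left. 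On the right, $q^{-j(j+1)}\alpha_j=z^j$, and this yields \eqref{hecke-1}. Both sides are finite sums in $z^{\pm1}$ at each order of $q$, so setting $z=1$ is unproblematic.

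\textbf{Left-hand side.} At $z=1$ we have
\[
(-1;q^2)_{n+1}=2(-q^2;q^2)_n,
\]
so the summand becomes
\[
\frac{2(-q^2;q^2)_n^2\,q^n}{(-q)_{2n+2}}.
\]
The left-hand side is therefore twice the series in Corollary~\ref{falsetheta1}.

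\textbf{Right-hand side.} The inner sum $\sum_{j=-n-1}^{n}1$ has $2n+2$ terms, so the right-hand side equals $\sum_{n\ge0}2(n+1)q^{n(n+2)}$.

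Dividing both sides by $2$ gives the stated identity. The only point requiring care is the factor $2$ coming from $(-1;q^2)_{n+1}$: the corollary as displayed omits it, and it matches the $2n+2$ on the right-hand side, so the two factors cancel consistently. There is no genuine obstacle beyond this bookkeeping.
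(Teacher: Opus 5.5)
Your proposal is correct and is exactly the paper's route: the paper obtains Corollary~\ref{falsetheta1} by setting $z=1$ (i.e.\ $\theta=0$) in Theorem~\ref{hecke11}. Your observation that $(-1;q^2)_{n+1}=2(-q^2;q^2)_n$ cancels against the $2n+2$ terms of the inner sum is the simplification the paper leaves implicit.
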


In fact, this identity also appears in \cite[(1.6)]{AW07}. From other special values of $\theta$, it leads to identities on false theta functions.

\begin{corollary}\label{falsetheta2}
We have
\begin{align}
\sum_{n=0}^\infty\frac{(-q^4; q^4)_n}{(-q)_{2n+2}}q^n =\sum_{n=0}^\infty q^{4n(4n+2)} (1-q^{16n+8}).
\end{align}
\end{corollary}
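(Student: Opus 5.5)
The statement is the specialization $z=e^{i\pi/2}=i$ of Theorem \ref{hecke11}, followed by division of both sides by the common factor $1-i$. I would therefore simplify each side of \eqref{hecke-1} at $z=i$ and check that each equals $(1-i)$ times the corresponding side of the corollary.

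\emph{Left-hand side.} At $z=i$ we have $-1/z=i$, so the first factor of $(-1/z;q^2)_{n+1}$ is $1+1/i=1-i$. The remaining factors pair with those of $(-iq^2;q^2)_n$ as
\[
(1+q^{2j}/i)(1+iq^{2j})=1+q^{4j},\qquad 1\le j\le n.
\]
Hence
\[
(-1/z;q^2)_{n+1}(-zq^2;q^2)_n=(1-i)(-q^4;q^4)_n ,
\]
and the left-hand side of \eqref{hecke-1} becomes $(1-i)\sum_{n\ge0}\frac{(-q^4;q^4)_n}{(-q)_{2n+2}}q^n$. This is the same manipulation as \eqref{triangle fomular} with $\cos\theta=0$.

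\emph{Right-hand side.} Put $S_n=\sum_{j=-n-1}^{n} i^j$. Then $S_0=1+i^{-1}=1-i$ and $S_n=S_{n-1}+i^{n}+i^{-n-1}$. Checking the residues of $n$ modulo $4$ gives
\[
S_n=\begin{cases} 1-i, & n\equiv 0 \pmod 4,\\ -(1-i), & n\equiv 2 \pmod 4,\\ 0, & n \text{ odd}.\end{cases}
\]
This is because $i^n+i^{-n-1}$ equals $-1+i$, $-1-i$, $1-i$, $1+i$ for $n\equiv1,2,3,0 \pmod 4$ with $n\ge1$, so the sequence has period $4$. Dividing by $1-i$, the right-hand side reduces to
\[
\sum_{m\ge0}\bigl(q^{4m(4m+2)}-q^{(4m+2)(4m+4)}\bigr).
\]
Since $(4m+2)(4m+4)=4m(4m+2)+16m+8$, this equals $\sum_{m\ge0}q^{4m(4m+2)}(1-q^{16m+8})$, which is the claimed right-hand side.

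The only step needing care is the residue bookkeeping for $S_n$. It is routine, but it is where sign errors could arise, so I would verify the first four values of $S_n$ explicitly before invoking periodicity.
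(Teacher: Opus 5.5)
Your argument is the paper's: set $z=i$ in Theorem \ref{hecke11}, extract the factor $1-i$ on the left via $(1-iq^{2j})(1+iq^{2j})=1+q^{4j}$, and evaluate $\sum_{j=-n-1}^{n} i^j$ by the residue of $n$ modulo $4$. Your table for $S_n$ and the final reindexing with $(4m+2)(4m+4)=4m(4m+2)+16m+8$ are both correct.

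The one flaw is in the sentence that justifies the table. Since $i^{-3}=i$, and $i^{-n-1}=-i$ when $n\equiv 0 \pmod 4$, the increments $i^n+i^{-n-1}$ for $n\equiv 1,2,3,0$ are $-1+i,\,-1+i,\,1-i,\,1-i$. You wrote $-1+i,\,-1-i,\,1-i,\,1+i$. Your values would give $S_2=-1-i$, which contradicts your own table. Correct those two entries and the proof is complete.
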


\begin{proof}
Substituting $\theta=\frac{\pi}{2}$, that is, $z=i$ into \eqref{hecke-1},  we have
\begin{align*}
\sum_{n=0}^\infty \frac{(-q^4; q^4)_n}{(-q)_{2n+2}}q^n =\frac{1}{1-i} \sum_{n=0}^\infty q^{n(n+2)} \sum_{j=-n-1}^n i^{j},
\end{align*}
By considering the residue classes of $n$ modulo $4$ and simplifying the terminal term, it  yields
\begin{align*}
&\sum_{j=-\lfloor\frac{n+1}{4}\rfloor}^{\lfloor\frac{n}{4}\rfloor}i^{4j}
+\sum_{j=-\lfloor\frac{n+2}{4}\rfloor}^{\lfloor\frac{n-1}{4}\rfloor}i^{4j+1}
+\sum_{j=-\lfloor\frac{n+3}{4}\rfloor}^{\lfloor\frac{n-2}{4}\rfloor}i^{4j+2}
+\sum_{j=-\lfloor\frac{n+4}{4}\rfloor}^{\lfloor\frac{n-3}{4}\rfloor}i^{4j+3}\\
&\qquad=\Big(\lfloor\frac{n}{4}\rfloor+\lfloor\frac{n+1}{4}\rfloor+1\Big)
-\Big(\lfloor\frac{n-2}{4}\rfloor+\lfloor\frac{n+3}{4}\rfloor+1\Big)\\ &\qquad\quad
+i\bigg(\Big(\lfloor\frac{n-1}{4}\rfloor+\lfloor\frac{n+2}{4}\rfloor+1\Big)
-\Big(\lfloor\frac{n-3}{4}\rfloor+\lfloor\frac{n+4}{4}\rfloor+1\Big)\bigg)\\
&\qquad=\left\{\begin{array}{ll}
     1-i, & \hbox{if $n\equiv 0 \pmod{4}$ ,} \\[6pt]
     -1+i, & \hbox{if $n\equiv2\pmod{4}$,} \\[6pt]
     0, & \hbox{if $n\equiv 1,3\pmod{4}$},
               \end{array}
             \right.
\end{align*}
hence the right hand can be expressed as
\begin{align*}
\sum_{n=0}^\infty q^{4n(4n+2)}-\sum_{n=0}^\infty q^{(4n+2)(4n+4)},
\end{align*}
which completes the proof.
\end{proof}

\begin{corollary}\label{falsetheta3}
We have
\begin{align}
\sum_{n=0}^\infty\frac{(q^6; q^6)_n}{(-q)_{2n+2}(q^2;q^2)_n} q^n =\sum_{n=0}^\infty (-1)^n q^{3n(3n+2)} (1+q^{6n+3}).
\end{align}
\end{corollary}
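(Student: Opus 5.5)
The plan is to specialize Theorem~\ref{hecke11} at a sixth root of unity, exactly as Corollary~\ref{falsetheta2} was obtained from $z=i$. I take $z=e^{i\pi/3}$, that is, $\theta=\frac{\pi}{3}$. This choice gives $z+z^{-1}=1$ and $z^3=-1$, and I then divide both sides of \eqref{hecke-1} by $1+z^{-1}$.

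\textbf{Left-hand side.} Since $(-1/z;q^2)_{n+1}=(1+z^{-1})\prod_{j=1}^n(1+q^{2j}/z)$, dividing by $1+z^{-1}$ leaves
\begin{align*}
\prod_{j=1}^n(1+q^{2j}/z)(1+zq^{2j})=\prod_{j=1}^n(1+q^{2j}+q^{4j})=\frac{(q^6;q^6)_n}{(q^2;q^2)_n}.
\end{align*}
Here I use $(1+zx)(1+x/z)=1+(z+z^{-1})x+x^2$ and $1+x+x^2=(1-x^3)/(1-x)$. So the left-hand side becomes exactly the stated series, and this step is routine.

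\textbf{Right-hand side.} I need the value of $S(n)/(1+z^{-1})$, where $S(n)=\sum_{j=-n-1}^{n}z^{j}$ is a sum of $2n+2$ consecutive powers of $z$. Because $z$ is a primitive sixth root of unity, any $6$ consecutive powers sum to $0$. So $S(n)$ depends only on $n \bmod 3$, apart from an overall power of $z$ that I must track.

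I will remove full periods from the bottom of the range, so that only the top $2$, $4$, or $0$ terms remain. I use the relation $z^{a}+z^{a+3}=0$ for pairs of terms three apart, and write $1+z^{-1}=z^{-1}(1+z)$.

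\begin{itemize}
\item For $n=3m$, the remaining terms are $z^{3m-1}+z^{3m}=z^{3m-1}(1+z)$. Hence $S(n)/(1+z^{-1})=z^{3m}=(-1)^m$.
\item For $n=3m+1$, the four remaining terms $z^{3m-2}+z^{3m-1}+z^{3m}+z^{3m+1}$ reduce, after the cancellation $z^{3m-2}+z^{3m+1}=0$, to $z^{3m-1}(1+z)$. This again gives $(-1)^m$.
\item For $n=3m+2$, $2n+2=6m+6$ is a multiple of $6$, so $S(n)=0$.
\end{itemize}
As a check, $n=0,1,3$ give $1,1,-1$ directly.

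Finally, I substitute back into $\sum_{n\ge0}q^{n(n+2)}S(n)/(1+z^{-1})$:
\begin{itemize}
\item $n=3m$ contributes $(-1)^m q^{3m(3m+2)}$;
\item $n=3m+1$ contributes $(-1)^m q^{(3m+1)(3m+3)}=(-1)^m q^{3m(3m+2)+6m+3}$.
\end{itemize}
Combining the two classes gives $\sum_{m\ge0}(-1)^m q^{3m(3m+2)}(1+q^{6m+3})$, which is the claimed identity.

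The only delicate point is the residue-class bookkeeping of the truncated geometric sum. In particular, I must make sure the power of $z$ multiplying $1+z$ comes out real, namely $\pm1$, rather than a stray root of unity; I will double-check this against the small cases $n=0,1,2,3$.
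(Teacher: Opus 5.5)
Your proposal is correct and follows the paper's proof. You specialize Theorem~\ref{hecke11} at $z=e^{i\pi/3}$, divide by $1+z^{-1}$, and evaluate $\sum_{j=-n-1}^{n}z^j$ by residue classes of $n$, which gives the coefficients $1,1,0,-1,-1,0$ for $n\equiv 0,\dots,5 \pmod 6$. The only cosmetic difference is that the paper tabulates the complex values of the sum modulo $6$ and cancels them against $1+e^{-\pi i/3}=\frac32-\frac{\sqrt3}{2}i$, whereas you work modulo $3$ and track the sign through $z^{3m}=(-1)^m$.
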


This identity can also be seen in \cite[example 348]{Chu10}.

\begin{proof}
For $\theta=\frac{\pi}{3}$ in Theorem \ref{hecke11}, we can get the left hand side by multiplying both sides with $\frac{1}{1+e^{-\frac{\pi}{3}i}}$. Then for the right hand side, it turns to
\begin{align*}
\frac{1}{1+e^{-\frac{\pi}{3}i}} \sum_{n=0}^\infty q^{n(n+2)} \sum_{j=-n-1}^n e^{\frac{\pi}{3}ij}.
\end{align*}
Considering  the residue class of $n$ modulo $6$ and simplifying, it implies that
\begin{align*}
\sum_{j=-n-1}^n e^{\frac{\pi}{3}ij}=&\left\{\begin{array}{ll}
  \frac{3}{2}-\frac{\sqrt{3}}{2}i,
 & \hbox{if $n\equiv 0,1 \pmod{6}$ ,} \\[6pt]
   -\frac{3}{2}+\frac{\sqrt{3}}{2}i,
  & \hbox{if $n\equiv 3,4 \pmod{6}$,} \\[6pt]
    0, & \hbox{if $n\equiv 2,5\pmod{6}$}.
               \end{array}
             \right.
\end{align*}
Thus the right hand  side equals
\begin{align*}
&\sum_{n=0}^\infty q^{6n(6n+2)}+\sum_{n=0}^\infty q^{6(n+1)(6n+3)}-\Big(\sum_{n=0}^\infty q^{(6n+3)(6n+5)}+\sum_{n=0}^\infty q^{(6n+4)(6n+6)}\Big)\\
&=\sum_{n=0}^\infty (-1)^n(1+q^{6n+3})q^{3n(3n+2)},
\end{align*}
and the corollary is proved.
\end{proof}

Similar to Corollary \ref{falsetheta3}, by taking $\theta=\frac{2\pi}{3}$ into Theorem \ref{hecke11}, we  get the following identity.

\begin{corollary}\label{falsetheta4}
We have
\begin{align}
\sum_{n=0}^\infty \frac{(-q^6; q^6)_n}{(-q)_{2n+2}(-q^2;q^2)_n} q^n =\sum_{n=0}^\infty q^{3n(3n+2)} (1-q^{6n+3}).
\end{align}
\end{corollary}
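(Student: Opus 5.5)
The plan is to follow the proof of Corollary \ref{falsetheta3}: specialize Theorem \ref{hecke11} at $z=e^{i\theta}$ with $\theta=\frac{2\pi}{3}$. Put $\omega=e^{\frac{2\pi}{3}i}$, so that $\omega^3=1$, $\omega+\omega^{-1}=-1$ and $1+\omega^{-1}=-\omega$. Then divide both sides of \eqref{hecke-1} by $1+\omega^{-1}$.

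\textbf{Left hand side.} Write
\[
(-1/\omega;q^2)_{n+1}(-\omega q^2;q^2)_n=(1+\omega^{-1})\prod_{j=1}^n(1+\omega q^{2j})(1+\omega^{-1}q^{2j}).
\]
Each factor satisfies
\[
(1+\omega q^{2j})(1+\omega^{-1}q^{2j})=1-q^{2j}+q^{4j}=\frac{1+q^{6j}}{1+q^{2j}}.
\]
Hence the product is $(1+\omega^{-1})\,(-q^6;q^6)_n/(-q^2;q^2)_n$. After dividing by $1+\omega^{-1}$, this is exactly the summand on the left of Corollary \ref{falsetheta4}.

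\textbf{Right hand side.} Evaluate the inner geometric sum
\[
\sum_{j=-n-1}^{n}\omega^j=\omega^{-n-1}\,\frac{1-\omega^{2n+2}}{1-\omega}
\]
according to the residue of $n$ modulo $3$.
\begin{itemize}
\item If $n\equiv 0\pmod 3$, the sum is $\omega^{-1}(1+\omega)=1+\omega^{-1}$, so after division it contributes $1$.
\item If $n\equiv 1\pmod 3$, then $\omega^{2n+2}=\omega$, and the sum is $\omega^{-2}=\omega=-(1+\omega^{-1})$, so after division it contributes $-1$.
\item If $n\equiv 2\pmod 3$, then $\omega^{2n+2}=1$, and the sum vanishes.
\end{itemize}
The right hand side is therefore
\[
\sum_{m\ge0}q^{3m(3m+2)}-\sum_{m\ge0}q^{(3m+1)(3m+3)}.
\]
Since $(3m+1)(3m+3)=3m(3m+2)+6m+3$, this equals $\sum_{m\ge0}q^{3m(3m+2)}(1-q^{6m+3})$, which is the claimed identity.

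The only delicate step is the residue bookkeeping on the right hand side, in particular confirming that the $n\equiv1$ class gives exactly $-1$ after dividing by $1+\omega^{-1}$. I would double-check this by computing the small cases $n=0,1,2$ directly.
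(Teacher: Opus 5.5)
Your proposal is correct and follows the same route as the paper: specialize Theorem~\ref{hecke11} at $z=e^{2\pi i/3}$, divide by $1+e^{-2\pi i/3}$, simplify the left side via $1-q^{2j}+q^{4j}=(1+q^{6j})/(1+q^{2j})$, and sort the inner sum $\sum_{j=-n-1}^{n}\omega^j$ by $n \bmod 3$. The paper only says this is done ``similar to Corollary~\ref{falsetheta3}'', and your residue computations (contributions $1,-1,0$ for $n\equiv 0,1,2 \pmod 3$) supply exactly the omitted details.
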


This identity can be found as the example 347 in \cite{Chu10}.

By comparing the right hand sides of Corollary \ref{falsetheta3} and Corollary \ref{falsetheta4},   we can see that they are equivalent by setting $q\mapsto -q$, hence from their left hand sides,  we obtain the following interesting relation
\begin{align*}
\sum_{n=0}^\infty\frac{(-1)^n(q^6; q^6)_n}{(q)_{2n+1}(-q^2;q^2)_{n+1}}q^n
=\sum_{n=0}^\infty\frac{(-q^6; q^6)_n}{(-q)_{2n+2}(-q^2;q^2)_n}q^n.
\end{align*}

Finally, taking the asymmetric bilateral Bailey pair \eqref{asymmetricbb1} into formula \eqref{hecke2} gives the following result.

\begin{theorem}\label{hecke21}
We have
\begin{align}
\sum_{n=0}^\infty \frac{(-1/z;q^2)_{n+1} (-zq^2; q^2)_n}{(-q)_{2n+1}} q^n
=\sum_{n=0}^\infty q^{n(n+3)/2} \sum_{j=-\lfloor\frac{n}{2}\rfloor-1}^ {\lfloor\frac{n}{2}\rfloor} z^{j} q^{-j(j+1)}.
\end{align}
\end{theorem}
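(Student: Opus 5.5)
The plan is to insert the pair \eqref{asymmetricbb1}, with $q\mapsto q^2$, into formula \eqref{hecke2} of Lemma \ref{thm11}. This parallels the derivation of Theorem \ref{hecke11} from \eqref{hecke1}.

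First, we check that the pair has the right shape. Under $q\mapsto q^2$, identity \eqref{Andrewidentity1-transform} gives
\begin{align*}
\frac{(-1/z;q^2)_{n+1}(-zq^2;q^2)_n}{(q^2;q^2)_{2n+1}}=\sum_{j=-n-1}^{n}\frac{z^{j}q^{j(j+1)}}{(q^2;q^2)_{n-j}(q^2;q^2)_{n+j+1}}.
\end{align*}
Hence
\[
\alpha_j=z^{j}q^{j(j+1)},\qquad \beta_n=\frac{(-1/z;q^2)_{n+1}(-zq^2;q^2)_n}{(q^2;q^2)_{2n+1}}
\]
satisfy exactly \eqref{asymmetric}, because $(q^2;q^2)_{n+r+1}$ matches the second factor in the denominator.

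Second, we substitute into the left side of \eqref{hecke2}. We use
\[
(q;q^2)_{n+1}\,(q^2;q^2)_{n}\cdots,\qquad (q^2;q^2)_{2n+1}=(q;q)_{2n+1}(-q;q)_{2n+1}
\]
to simplify. The factor $(q)_{2n+1}$ then cancels, and
\[
\frac{(q)_{2n+1}}{(q^2;q^2)_{2n+1}}=\frac{1}{(-q)_{2n+1}}.
\]
This gives the left side of the theorem.

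Third, on the right side of \eqref{hecke2}, the inner summand becomes $q^{-2j(j+1)}z^{j}q^{j(j+1)}=z^{j}q^{-j(j+1)}$, which is exactly the stated right side.

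There is no real obstacle. The only point needing care is confirming that the normalization of \eqref{asymmetric} (denominator $(q^2;q^2)_{n+r+1}$) agrees with the $\lambda=1$ form of \eqref{asymmetricbb1} after $q\mapsto q^2$. It does, since in the $\lambda=1$ case $(q)_{n+k+\lambda}=(q)_{n+k+1}$.
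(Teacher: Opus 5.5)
Your proposal is correct and is the same route the paper takes: substitute the pair \eqref{asymmetricbb1} with $q\mapsto q^2$ (so $\alpha_j=z^jq^{j(j+1)}$) into \eqref{hecke2}, cancel $(q)_{2n+1}$ on the left using $(q^2;q^2)_{2n+1}=(q)_{2n+1}(-q)_{2n+1}$, and combine $q^{-2j(j+1)}q^{j(j+1)}=q^{-j(j+1)}$ on the right. The incomplete fragment $(q;q^2)_{n+1}(q^2;q^2)_n\cdots$ in your second step is never used and should simply be deleted.
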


When $z=e^{i\theta}$ and substituting $\theta=0, \frac{\pi}{2}, \frac{\pi}{3},\frac{2\pi}{3}$ into Theorem \ref{hecke21}, respectively, we get the following identities on generalized Hecke--type series.

\begin{corollary}
We have
\begin{subequations}
\begin{align*}
&2\sum_{n=0}^\infty\frac{(-q^2; q^2)^2_n}{(-q)_{2n+1}}q^n
=\sum_{n=0}^\infty q^{n(n+3)/2}\sum_{j=-\lfloor\frac{n+2}{2}\rfloor}^{\lfloor\frac{n}{2}\rfloor}q^{-j(j+1)},\\
&\sum_{n=0}^\infty\frac{(-q^4;q^4)_n}{(-q)_{2n+1}}q^n
=\sum_{n=0}^\infty q^{n(n+3)/2}\sum_{j=-\lfloor\frac{n+2}{4}\rfloor}^{\lfloor\frac{n}{4}\rfloor}
(-1)^j q^{-2j(2j+1)},\\
&\sum_{n=0}^\infty\frac{(q^6; q^6)_n}{(-q)_{2n+1}(q^2;q^2)_n}q^n\\
&\quad =\sum_{n=0}^\infty q^{n(n+3)/2} \Big(\sum_{j=-\lfloor\frac{n+2}{12}\rfloor}^{\lfloor\frac{n}{12}\rfloor}
q^{-6j(6j+1)}-\sum_{j=-\lfloor\frac{n+6}{12}\rfloor}^{\lfloor\frac{n-4}{12}\rfloor}
q^{-(6j+2)(6j+3)} \Big),\\
&\sum_{n=0}^\infty\frac{(-q^6; q^6)_n}{(-q)_{2n+1}(-q^2;q^2)_n}q^n \\
&\quad =\sum_{n=0}^\infty q^{n(n+3)/2} \Big(\sum_{j=-\lfloor\frac{n+2}{6}\rfloor}^{\lfloor\frac{n}{6}\rfloor}
q^{-3j(3j+1)}-\sum_{j=-\lfloor\frac{n+4}{6}\rfloor}^{\lfloor\frac{n-2}{6}\rfloor}
q^{-(3j+1)(3j+2)} \Big).
\end{align*}
\end{subequations}
\end{corollary}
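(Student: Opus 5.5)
The plan is to specialize Theorem~\ref{hecke21} at $z=e^{i\theta}$ with $\theta=0,\frac{\pi}{2},\frac{\pi}{3},\frac{2\pi}{3}$, and then simplify the two sides separately. On each side I divide by the factor $1+z^{-1}=1+e^{-i\theta}$, exactly as in the proofs of Corollaries~\ref{falsetheta2}--\ref{falsetheta4}.

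\emph{Left hand sides.} I write $(-1/z;q^2)_{n+1}=(1+z^{-1})(-q^2/z;q^2)_n$. Then $(-q^2/z,-zq^2;q^2)_n=\prod_{k=1}^n(1+2q^{2k}\cos\theta+q^{4k})$ by \eqref{triangle fomular} with $q\mapsto q^2$. For $\theta=0$ this product is $(-q^2;q^2)_n^2$, and the factor $1+z^{-1}=2$ is kept, which explains the factor $2$ in the first identity. For $\theta=\frac{\pi}{2}$ the product is $(-q^4;q^4)_n$. For $\theta=\frac{\pi}{3}$ and $\theta=\frac{2\pi}{3}$ it is $\prod(1\pm q^{2k}+q^{4k})$, which equals $(q^6;q^6)_n/(q^2;q^2)_n$ and $(-q^6;q^6)_n/(-q^2;q^2)_n$ respectively. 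This part is routine.

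\emph{Right hand sides.} The inner range $-\lfloor n/2\rfloor-1\le j\le\lfloor n/2\rfloor$ is invariant under $j\mapsto -j-1$, and so is $q^{-j(j+1)}$. Hence I may replace $z^j$ by the symmetrized weight
\[
c_j(z)=\frac{z^j+z^{-j-1}}{2(1+z^{-1})}.
\]
Alternatively, I pair each $j$ with $-j-1$ and keep one representative per orbit.
\begin{itemize}
\item For $z=1$ every $c_j=1/2$. After multiplying back by $2$, the range $j\ge-\lfloor n/2\rfloor-1=-\lfloor (n+2)/2\rfloor$ gives the first identity.
\item For $z=i$, an odd $j$ has an even partner $-j-1$. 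I therefore collect everything onto even indices $j=2m$. The weight becomes $(i^{2m}+i^{-2m-1})/(1-i)=(-1)^m$, and $q^{-2m(2m+1)}$ appears. The range condition $-\lfloor n/2\rfloor-1\le 2m\le\lfloor n/2\rfloor$ turns into $-\lfloor (n+2)/4\rfloor\le m\le\lfloor n/4\rfloor$.
\item For $z=e^{\pi i/3}$ and $z=e^{2\pi i/3}$, I split $j$ by its residue modulo $6$ (respectively $3$). The involution $j\mapsto -j-1$ permutes these classes. Computing $(z^j+z^{-j-1})/(1+z^{-1})$ on each orbit should give weights $1$, $-1$ and $0$. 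The surviving classes are $j\equiv 0$ and $j\equiv 2$ (with sign $-1$) modulo $6$, and $j\equiv 0,1$ modulo $3$ in the second case. Writing $j=6m$, $j=6m+2$ (respectively $3m$, $3m+1$) then yields the exponents $-6m(6m+1)$, $-(6m+2)(6m+3)$, $-3m(3m+1)$ and $-(3m+1)(3m+2)$.
\end{itemize}

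The main obstacle is the bookkeeping of the summation limits. After folding the orbits, the constraint on $j$ must be rewritten as the floor bounds stated, for example $-\lfloor (n+6)/12\rfloor\le m\le\lfloor (n-4)/12\rfloor$ for $j=6m+2$. I would verify each bound by checking, for each residue of $n$ modulo $12$ (respectively modulo $6$ or $4$), which representative of the orbit lies in the range. I would also confirm that no orbit is double-counted at the endpoints. The endpoints $j=\lfloor n/2\rfloor$ and $j=-\lfloor n/2\rfloor-1$ are themselves partners, which makes the folding consistent.
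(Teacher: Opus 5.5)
Your proposal is correct and is essentially the paper's route: you specialize Theorem~\ref{hecke21} at $z=e^{i\theta}$, divide by $1+e^{-i\theta}$, and regroup the inner sum by residue classes of $j$, using that both the range and $q^{-j(j+1)}$ are invariant under $j\mapsto -j-1$; the floor bounds then follow directly from $\lfloor n/2\rfloor+c=\lfloor (n+2c)/2\rfloor$ and $\lfloor\lfloor x\rfloor/k\rfloor=\lfloor x/k\rfloor$, so no case check modulo $12$ is needed. One small correction concerns $\theta=2\pi/3$: there the symmetrized weights are $\tfrac12,-1,\tfrac12$ on $j\equiv 0,1,2 \pmod 3$, not $1,-1,0$, and the class $j\equiv 1$ is mapped to itself by $j\mapsto -j-1$, so you keep that whole class with weight $-1$ instead of folding it onto another class, which still gives exactly the stated formula.
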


\vskip 15pt
\noindent {\small {\bf Acknowledgments.}
	This work is supported by the National Natural Science Foundation of China (No. 12071235) and the Fundamental Research Funds for the Central Universities of China.
	
\vskip 8pt 

\end{document}